\renewcommand{\epsilon}{\varepsilon}
\DeclareMathOperator{\dvg}{div} \DeclareMathOperator{\spt}{spt}
\DeclareMathOperator{\dist}{dist} 
\DeclareMathOperator{\graph}{graph} 
\DeclareMathOperator{\trace}{trace}\DeclareMathOperator{\diam}{diam}
  \DeclareMathOperator{\Nor}{Nor}
  \DeclareMathOperator{\proj}{proj}
  \DeclareMathOperator{\refl}{ref}
\title[$C^{1,\alpha}$ theory for the MCE with Dirichlet data]
{$C^{1,\alpha}$ theory for the prescribed mean curvature equation with Dirichlet data}
\author{Theodora Bourni}
\newtheoremstyle{break}% name
 {9pt}%      Space above, empty = `usual value'
 {9pt}%      Space below
 {\itshape}% Body font
 {}%         Indent amount (empty = no indent, \parindent = para indent)
 {\bfseries}% Thm head font
 {.}%        Punctuation after thm head
 {\newline}% Space after thm head: \newline = linebreak
 {}%         Thm head spec
\theoremstyle{plain}
\newtheorem{theorem}[equation]{\textbf{Theorem}}
\newtheorem{lemma}[equation]{\textbf{Lemma}}
\newtheorem{definition}[equation]{\textbf{Definition}}
\newtheorem{rmk}[equation]{\textbf{Remark}}
\newtheorem{cor}[equation]{\textbf{Corollary}}
\numberwithin{equation}{section}
\theoremstyle{break}
\def\tagform@#1{\maketag@@@{\ignorespaces#1\unskip\@@italiccorr}}
\def\subsection{\@startsection {subsection}{2}{\z@}{4ex}{1.5ex}{\normalsize\bf}}
\def\R{\mathbb{R}}
\def\W{\Omega}
\def\d{\delta}
\def\a{\alpha}
\def\e{\epsilon}
\def\r{\rho}
\def\s{\sigma}
\def\o{\omega}
\def\l{\lambda}
\def\k{\kappa}
\def\H{\mathcal{H}}
\def\mass{\underline{\underline{M}}}
\def\wt{\widetilde}
\def\ov{\overline}
\def\res{\hbox{ {\vrule height .25cm}{\leaders\hrule\hskip.2cm}}\hskip5.0\mu}
\def\B{\mathcal{B}}
\def\BV{\text{BV}}
\def\toa{\stackrel{C^{1,\a}}{\longrightarrow}}
\def\toap{\stackrel{C^{1,\a'}}{\longrightarrow}}
\begin{document}
\maketitle
\begin{abstract}
 In this work we study solutions of the prescribed mean curvature equation over a general domain that do not necessarily attain the given boundary data. To such a solution, we can naturally associate a current with support in the closed cylinder above the domain and with boundary given by the prescribed boundary data and which  inherits a natural minimizing property. Our main result is that its support is a $C^{1,\alpha}$ manifold-with-boundary, with boundary equal to the prescribed boundary data, provided that both the initial domain and the prescribed boundary data are of class $C^{1,\alpha}$.\end{abstract}

\section{Introduction}
The Dirichlet problem for surfaces of prescribed mean
curvature in an open set $\Omega$ of $\mathbb{R}^n$ concerns the existence of a solution
to the equation
\begin{equation}\label{PMCE}
  \sum_{i=1}^nD_i\left(\frac{D_iu}{\sqrt{1+|Du| ^2}}\right)=H(x,u)\hbox{  in  }\Omega
  \end{equation}
  taking prescribed values 
  \[u=\phi\text{   on   }\partial\Omega.\] 
    Here and throughout this paper $\W\subset\R^n$ is an open bounded set, $\phi\in L^1(\partial\W)$ and $H(x,x_{n+1})$ is a $C^1$ function defined in $\W\times\R$, which is non-decreasing in the $x_{n+1}$-variable and such that $\|H\|_{0}\le n\left(\o_n/|\Omega|\right)^{1/n}$.
  
  It is known \cite{JS-MSE-Dirichletproblem, SPMCE-andotherDirichletproblems} that if $\partial\W$ is $C^2$, then a solution exists for any given boundary values $\phi\in C^0(\partial\W)$ provided that $H_{\partial\W}(x)>|H(x,\phi(x))|$ for each $x\in\partial\W$, where $H_{\partial\W}$ denotes the mean curvature of the boundary and furthermore the regularity of the solution depends on that of $\partial\W$ and $\phi$. Here and in what follows we adopt the sign convention according to which the mean curvature of $\partial\W$ is non-negative in case $\W$ is convex. Furthermore, there are examples that indicate that this condition is necessary for the existence of a solution (cf.\cite[14.4]{GT}).
  
 Our goal is to study the regularity of such a solution without imposing any curvature conditions for $\partial\W$. For this reason we will use a variational approach to the Dirichlet problem (cf. \cite{Giu-Introof-DeGiorgi-functional, MM-DeGiorgi-functionalwithH}) and look for a minimum of the functional
  \begin{equation}\label{functional F}
\mathcal{F}(v)=\int_\Omega\sqrt{1+|Dv|^2}dx+\int_\Omega\int_0^{v(x)}H(x,x_{n+1})dxdx_{n+1}+\int_{\partial
  \Omega}|v-\phi|dx
  \end{equation}
for $v\in \BV(\W)$; here $\BV(\Omega)$ denotes the space of all functions
  in $L^1(\Omega)$ that have bounded variation, i.e. with first distribution derivatives given by signed Radon measures. 
  
  Giusti and Miranda
\cite{Giu-Introof-DeGiorgi-functional, MM-DeGiorgi-functionalwithH} have proved that if $\partial\Omega$ is Lipschitz, then there exists a minimizer $u$ of the functional $\mathcal{F}$, which is unique up to translations. Furthermore this minimizer satisfies equation \ref{PMCE} in $\W$ (cf. \cite{BomGiusti, LUgradbounds, TrudGradest}) and attains the prescribed boundary values above any $C^2$ portion of the boundary where the mean curvature is bigger than $|H(x,\phi(x))|$ \cite{MM-lpcdomaind-weakMSE}. 
  
  The purpose of this paper is to give a complete and general discussion on the regularity of the hypersurface obtained by taking the union of $\{(x,u(x)): x\in\ov{\W}\}$ and the part of $\partial\W\times\R$ which is enclosed by $\{(x,u(x)):x\in\partial\W\}$ and $\{(x,\phi(x)):x\in\partial\W\}$, where $u$ is the minimizer of $\mathcal{F}$.  In particular in our Main Theorem (Theorem \ref{main theorem}) we prove that if $\partial\W$ is $C^{1,\a}$ and $\phi\in C^{1,\a}(\partial\W)$, then this hypersurface is a $C^{1,\a}$ manifold-with-boundary, with boundary equal to $\graph\phi$. We also show that this regularity result can be extended for boundary data $\phi\in C^{1,a}(\partial\W\setminus\{x_0\})$, where at $x_0\in\partial\W$, $\phi$ has a jump discontinuity.
  
  Furthermore we will show that this manifold can be obtained as the $C^{1,\a}$ limit (as submanifolds of $\R^{n+1}$) of graphs of $C^{1,\a}$ functions over $\ov\W$. The main idea is to approximate the equation of the given Dirichlet problem \ref{PMCE} by new equations in which we change the RHS near the boundary by adding a divergence term that will allow us to prove existence of barriers for solutions of the new equations. We then use techniques from the theory of integer multiplicity varifolds, integral currents and partial differential equations to get uniform $C^{1,\a}$ estimates for the graphs of the solutions to the approximating equations.

 Concerning the regularity of $u$ (the minimizer of $\mathcal{F}$), it is known \cite{Giu-Bdry-behavior-ofnonparametric-MS, SimonHolderContinuity} that if $\phi$ is Lipschitz, then above any $C^2$ portion of the boundary where the mean curvature is bigger than $|H(x,\phi(x))|$, $u$ is Holder continuous for some positive exponent.
  However above points of the boundary where this condition is not satisfied, we could have $u\ne\phi$ and
   there are examples that show that the gradient of $u$ does not have to
   be bounded near these points. In \cite{LS-bdry-regularity-onnonparametric-MS} it is proved that if $\Omega$ is a $C^4$
domain and $\phi$ is a Lipschitz function over $\partial\W$ then in the case $H=0$, $u$ is Holder continuous
at every point
  $x\in \partial\Omega$ where the mean curvature is negative and furthermore the trace of $u$ as a function
   above $\partial\Omega$
   is locally Lipschitz at these points. Note that since $u\in\BV(\W)$ it has a well defined trace in $ L^1(\partial\W)$. In \cite{FHLinNonparametric} this result was extended for surfaces of prescribed mean curvature $H=H(x)$.
 
 The hypersurface that corresponds to $u$, as described above, inherits a minimizing property which we now describe: 
 To a function $v\in \BV(\W)$ we can associate an integral $n$-current defined by
  \begin{equation}\label{current T}
  T_v=[\![\graph v]\!]+Q
  \end{equation}
  where $Q$ is the multiplicity 1 $n$-current with support in $\partial\W\times\R$ and boundary $\partial Q=[\![\graph\phi]\!]-[\![\trace v]\!]$. Here and in what follows for the orientation of a current $[\![\graph v]\!]$ associated to the graph of a function $v$ we use the downward pointing unit normal to the graph. For any multiplicity 1 $n$-current $S$ such that $\spt S\subset\ov{\W}\times\R$ and $\partial S=[\![\graph\phi]\!]$ we let $\wt{S}$ be the multiplicity 1 $(n+1)$-current such that $S-[\![(x,z):x\in\partial\W, z\le\phi(x)]\!]=\partial\wt{S}$. Then if $u$ minimizes the functional $\mathcal{F}$, the current $T=T_u$, as defined in \ref{current T}, locally minimizes the functional
  \begin{equation}\label{minimizing T-property}
  \mass(T)+\int_{{\spt}\widetilde{T}}H(x',x_{n+1})dx'dx_{n+1}
  \end{equation}
  among all integral $n$-currents with support in $\ov{\W}\times\R$ and boundary $[\![\graph\phi]\!]$
  \cite{FHLinNonparametric}, where $\mass(T)$ denotes the mass of the current $T$.
 
This observation was first made by Lin and Lau \cite{Lin-Lau} for the $H=0$ case. In particular they observed that in that case ${T}$ minimizes area among all integral currents with support in $\ov{\W}\times\R$ and boundary equal to $[\![\graph\phi]\!]$, thus locally, near points of the trace of $u$ that are away from $\graph\phi$, $\spt T$ is a solution to a parametric obstacle problem. Hence, using results from \cite{BK-variationalinequalities, MM-frontiereminimali} in case $\W$ is a $C^2$ domain, they showed that $\spt T$ is a $C^{1,1}$ manifold near such points.

  There are various results concerning the regularity of minimal boundaries respecting a given obstacle \cite{MM-frontiereminimali, BarozziMassari, Tamanini}, however these results (as that of Lin and Lau) do not include any discussion about boundary points and hence, using these results, we cannot conclude anything about the regularity around points in the intersection $\trace u\cap\graph\phi$.
 
 Finally we mention that if $\partial\W$ is of class $C^2$ then, following the notation of \cite{DuzaarSteffen-lambda}, the current $T=T_u$ is $\lambda$-minimizing, i.e.
 \[\mass(T)\le\mass(T+\partial Q)+\l\mass(Q)\]
 for all integral $(n+1)$-currents $Q$, where $\l=\max\{\|H\|_0,\|H_{\partial\W}\|_0\}$.
 In \cite{DuzaarSteffen-bdryregularity}, Duzaar and Steffen generalized for such currents the boundary regularity results given in \cite{LSRHbdryregularity} for area minimizing currents. In particular they proved that if $\partial T$ is represented by a multiplicity 1, $C^{1,\a}$ submanifold, then $\spt T$ is a $C^{1,\beta}$ submanifold, for all $\beta\le\a/2$, around each point $a\in\partial T$, where $\Theta_T(a)<1+1/2$.

\section{The Dirichlet problem with regular data}\label{section the DP with RD}
\subsection*{Notation and Definitions}
$B^m_r(x)$ will denote the $m$-dimensional ball of radius $r$ centered at a point $x\in\R^m$, i.e.
\[ B^m_r(x)=\{y\in\R^m:|y-x|<r\}\]
and $\o_m$ will denote the measure of the $m$-dimensional unit ball.

For any $C^{1,\a}$ function $u:V\cap B^m_r(0)\to\R^n$, where $V\subset\R^m$ is a $C^{1,\a}$ domain and $\a\in(0,1]$, $\|u\|_{1,\a, V\cap B^m_r(x)}$ will denote the scaled $C^{1,\a}$ norm of $u$, i.e.:
\[\|u\|_{1,\a, V\cap B^m_r(x)}=\frac{1}{r}\|u\|_0+\|Du\|_0+r^{\a}[D u]_\a\]
 Occasionally, when there is no confusion about the domain of $u$, we will write $\|u\|_{1,\a}$ instead of $\|u\|_{1,\a,V\cap B^m_r(x)}$.

For a point $x\in\R^{n+1}$ we will often write $x=(x', x_{n+1})$, where $x'\in\R^n$. Finally the letter $c$ will denote a constant depending only on the specified parameters and when different constants appear in the course of a proof we will keep the same letter $c$ unless the constant depends on some different parameters.

\begin{definition}\label{c1a convergence}
 $M\subset \R^{n+1}$ is an $m$-dimensional properly
embedded $C^{1,\a}$ submanifold \emph{(}where $m\le n$, $\a\in(0,1]$\emph{)} if for each $x\in M$ there is a $\rho>0$ such that
$$
M\cap B^{n+1}_{\rho}(x) = \graph u_{x}\cap  B^{n+1}_{\rho}(x),
$$
where $u_{x}\in C^{1,\alpha}((x+L_{x})\cap \overline B^{n+1}_{\rho}(x);L_{x}^{\perp})$ for some
$m$-dimensional subspace $L_{x}$ of $\R^{n+1}$ and where $\graph
u_{x}=\{\xi+u_{x}(\xi):\xi\in (x+L_{x})\cap  \ov B^{n+1}_{\rho}(x)\}$.  We quantify the regularity of
$M\cap B^{n+1}_{\rho}(x)$ by defining
\[\kappa(M,\rho,x) = \inf\|u_x\|_{1,\a,(x+L_x)\cap B^{n+1}_\r(x)}\]
where the infimum is taken over all choices of subspaces $L_{x}$ and corresponding representing functions
$u_{x}$.

We say that a sequence $M_{k}$ of $m$-dimensional submanifolds converges in the $C^{1,\a}$ sense to $M$ in $B^{n+1}_{\rho}(x)$ \emph{(}for $\rho>0$ and $x\in M$\emph{)} and write
\[M_k\toa M \text{  in  } B^{n+1}_{\rho}(x)\]
 if there exists a subspace $L_{x}$ and functions $u,u_{k}\in C^{1,\alpha}((x+L_{x})\cap\ov B^{n+1}_{\rho}(x);L_{x}^{\perp})$ with 
\[M_{k}\cap
B^{n+1}_{\rho}(x)=\graph u_{k}\cap B^{n+1}_{\rho}(x)\text{  ,  }M\cap B^{n+1}_{\rho}(x)=\graph u\cap B^{n+1}_{\rho}(x)\] and 
\[\|u_{k}-u\|_{1,\a,(x+L_x)\cap B^{n+1}_\r(x)}\to 0.\]

We then say that $M_{k}$ converges in the $C^{1,\a}$ sense to $M$ in $\R^{n+1}$  and write
 $M_{k}\toa M$, if there is a $\rho>0$ such that $M_{k}\subset\cup_{x\in
M}B^{n+1}_{\rho}(x)$ for all sufficiently large $k$ and if $M_k\toa M$ in $B^{n+1}_\r(x)$  for each $x\in M$.
\end{definition}

\begin{definition}[Regular Class]\label{ar regular class}
For $\a\in(0,1], r>0$ we define the $(\a,r)$-regular class, which we denote by $\B^\a_{r}$, to be the set of all pairs $(\W,\Phi)$ satisfying the following:

\begin{enumerate}
\item[1.] $\W$ is a domain of $\R^n$ such that $\partial\W\cap B^n_{r}(0)$ is a non-empty, $(n-1)$-dimensional embedded $C^{1,a}$ submanifold of $\R^n$ such that
\[\k(\partial\W,r,x)<1\,\,,\,\,\forall x\in\partial\W\cap B^n_r(0).\]
\item[2.] There exists a sequence of functions $\{\phi_i\}\subset L^1(\partial{\W})\cap C^{1,\a}(\partial{\W}\cap B^n_r(0))$ such that $\graph\phi_i\toa \Phi$ in $B^n_r(0)\times\R$ and $\Phi\cap (B^n_r(0)\times\R)$ is an $(n-1)$-dimensional embedded $C^{1,a}$ submanifold of $\R^{n+1}$ such that 
\[\k(\Phi\cap (B^n_r(0)\times\R),r,x)<1\,\,,\,\,\forall x\in\Phi\cap (B^n_r(0)\times\R).\]
\end{enumerate}

For $(\W,\Phi)\in\B^\a_{r}$ we define
\[{\k}_{(\W,\Phi)}=\max\{\sup_{x\in\partial\W\cap B^n_r(0)}\k(\partial\W,r,x), \sup_{x\in\Phi\cap (B^n_r(0)\times\R)}\k(\Phi\cap (B^n_r(0)\times\R),r,x)\}.\]
\end{definition}

\begin{rmk}\label{graphical expression of regular class}
Note that if $(\W,\Phi)\in\B^\a_r$ then
\begin{equation*} 
\|\nu_{\partial\W}(x)-\nu_{\partial\W}(y)\|\le c(n)\k_{(\W,\Phi)}|x-y|^{\a} 
\end{equation*}$\forall \, x,y\in\partial\W\cap B^n_r(0)$,
where for $x\in\partial\W$, $\nu_{\partial\W}(x)$ denotes the inward pointing unit normal to $\partial\W$ at $x$, and
\begin{equation*}
\|\proj_{\Nor_{\Phi}(x)}-\proj_{\Nor_{\Phi}(y)}\|\le c(n)\k_{(\W,\Phi)}|x-y|^{\a}
\end{equation*}
$\forall\, x,y\in\Phi\cap (B^n_r(0)\times\R)$ such that $|x-y|\le r$, where for $x\in\Phi$, $\Nor_{\Phi}(x)$ denotes the 2-dimensional normal subspace to $\Phi$ at $x$.
\end{rmk}

The following remark is a direct consequence of the Arzela-Ascoli theorem.
\begin{rmk}\label{AA remark}
Let $(\W_i,\Phi_i)\in\B^\a_{r_i}$ be a sequence such that $\liminf r_i=\infty$ and for some $r\in(0,\infty)$, $B_{r}^n(0)\cap\partial\W_i\ne\emptyset$ for all $i$. Then 
after passing to a subsequence
\[\W_i\toap \W \text{  and  } \Phi_i\toap \Phi \]
for any $\a'<\a$ in the sense of \emph{Definition \ref{c1a convergence}} and where $(\W,\Phi)\in \B^\a_{r'}$ for all $r'>r$.

If in addition $\k_{(\W_i,\Phi_i)}\to0$, then for the limit we have that $(\W,\Phi)=(H,\Phi)$,
where $H$ is an $n$-dimensional halfspace and $\Phi\subset \partial H\times\R$ is an $(n-1)$-dimensional linear space or $\emptyset$.
\end{rmk}

\subsection*{The Dirichlet Problem}\label{DP subsection}

Let $\W$ be a domain of $\R^n$ and $\phi\in L^{1}(\partial{\W})$ be such that $(\W,\graph \phi )\in {\B}^\a_{r}$, for some $\a\in(0,1]$ and $r>0$. We consider the following Dirichlet problem:

\begin{equation}\label{PMC dirichlet problem with divergence term}
\begin{split}
\sum_{i=1}^nD_i\left(\frac{D_iu}{\sqrt{1+|Du|^2}}\right)&=H+\sum_{i=1}^n D_if^i \text{  in  } \W\\
u&=\phi\text{  on  }\partial\W 
\end{split}
\end{equation}
where $H=H(x, u(x)), f^i=f^i(x,u(x))\in L^1(\ov{\W}\times\R)$, $H$ is bounded in $(\ov{\W}\cap B_r^n(0))\times\R$ and $f=(f^1,\dots f^n)$ is a $C^{0,\a}$ vector field in $(\ov{\W}\cap B_r(0))\times\R$, so that
\[\|H\|_{0,(\ov\W\cap B^n_r(0))\times\R}=\sup_{x\in(\ov\W\cap B^n_r(0))\times\R}|H(x)|<\infty\]
and
\[[f]_{\a,(\ov\W\cap B^n_r(0))\times\R}=\sup_{x,y\in(\ov\W\cap B^n_r(0))\times\R}\frac{|f(x)-f(y)|}{|x-y|}<\infty.\]
For notational simplicity and as long as there is no confusion about the domain $\W$, we will write $\|H\|_{0,B^n_r(0)\times\R}$ and $[f]_{\a, B^n_r(0)\times\R}$ instead of $\|H\|_{0,(\ov\W\cap B^n_r(0))\times\R}$ and $[f]_{\a,(\ov\W\cap B^n_r(0))\times\R}$ respectively.

The equation in \ref{PMC dirichlet problem with divergence term} above is to be interpreted weakly, i.e.
\begin{equation}\label{MCE*}
\int_\W\sum_{i=1}^n\frac{D_iu}{\sqrt{1+|Du|^2}}D_i\zeta d\H^n=-\int_\W H\zeta d\H^n+\int_\W\sum _{i=1}^nf_i D_i\zeta d\H^n
\end{equation}
for any $\zeta\in C^1_c(\W)$.

For the rest of Section \ref{section the DP with RD} we will let $u\in C^{1,\a}(\ov{\W})$ be a (weak) solution of the Dirichlet problem \ref{PMC dirichlet problem with divergence term} and $T=[\![\graph u]\!]$ be the multiplicity 1, $n$-current associated to the graph of $u$. Recall that for the orientation of a current associated to the graph of a function we use the downward pointing unit normal to the graph. In our case, for the function $u$, we extend this vector to be an $\R^{n+1}$-valued function in all of $\ov\W\times\R$ that is independent of the $x_{n+1}$-variable and we let $\nu$ denote this extension, i.e. for any $(x', x_{n+1})\in\W\times\R$
\begin{equation}\label{nu}
\nu(x', x_{n+1})=\left(\frac{D_1u(x')}{\sqrt{1+|Du(x')|^2}},\dots,\frac{D_nu(x')}{\sqrt{1+|Du(x')|^2}},\frac{-1}{\sqrt{1+|Du(x')|^2}}\right).
\end{equation}
Furthermore, we associate to the vector field $\nu$ an $n$-form $\o$ defined as follows:
\begin{equation}\label{omega}
\omega=\sum_{i=1}^{n+1}(-1)^{i+1}e_i\cdot{\nu} dx_1\wedge\dots\wedge\widehat{dx_i}\wedge\dots\wedge
dx_{n+1}
\end{equation}
where $e_1,e_2,\dots, e_{n+1}$ denote the standard unit vectors in $\R^{n+1}$.

\subsection*{Volume Bounds}

In this paragraph we will show bounds for the mass of the current $T$ and also prove that it has an ``almost minimizing" property (cf. Lemma \ref{minimizing property of Tk}). The main ingredient is  Lemma \ref{dvg-lemma}, which allows us to compare $T$, with other currents that have the same boundary and coincide with $T$ outside $(\ov{\W}\cap B^n_r(0))\times\R$. Recall that $r$ is such that for the initial data of the Dirichlet problem \ref{PMC dirichlet problem with divergence term} we have that $(\W,\graph\phi)\in\B^\a_r$.

\begin{lemma}\label{dvg-lemma}
Assume $u\in C^{1,\a}(\ov{\W})$ is a \emph{(}weak\emph{)} solution of the Dirichlet problem \emph{\ref{PMC dirichlet problem with divergence term}} and let $T=[\![\graph u ]\!]$ be the corresponding multiplicity \emph{1}, $n$-current. Let $R$ be a multiplicity \emph{1}, $(n+1)$-current in $\R^{n+1}$ with $\spt R\subset W\subset\subset (\ov\W\cap B^n_r(0))\times\R$. Then for $S=T-\partial R$ 
\[ (1-d^\a[f]_{\a, B^n_r(0)\times\R})\mass({T})\le(1+d^\a[f]_{\a, B^n_r(0)\times\R})\mass(S)+ \|H\|_{0, B^n_r(0)\times\R} \mass(R)\]
where $d=\diam W$. 
\end{lemma}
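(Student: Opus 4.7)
The plan is a calibration argument using a modification of the $n$-form $\omega$ defined in \ref{omega}, chosen to absorb the divergence term $\dvg f$ arising from the right-hand side of the PDE. Recall that $\omega$ is built from the downward unit normal $\nu$, so $T(\omega)=\mass(T)$ exactly (by design of $\omega$) while $|S(\omega)|\le\mass(S)$ (since the pointwise comass of $\omega$ equals $|\nu|=1$). A naive application with $\omega$ alone, using $T-S=\partial R$, would give $\mass(T)-\mass(S)\le\partial R(\omega)=R(d\omega)$, but by the weak form \ref{MCE*} of the PDE we have $d\omega=(H+\dvg f)\,dx_{1}\wedge\cdots\wedge dx_{n+1}$ distributionally, and the $\dvg f$ piece is not controlled by $\|H\|_{0}\mass(R)$. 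To kill it, fix $z_{0}\in W$ and define on a neighborhood of $W$
\[
\omega^{*}=\omega-\sum_{i=1}^{n}(-1)^{i+1}\bigl(f^{i}(x)-f^{i}(z_{0})\bigr)\,dx_{1}\wedge\cdots\wedge\widehat{dx_{i}}\wedge\cdots\wedge dx_{n+1}.
\]
Then $\omega^{*}$ is the interior product of $dx_{1}\wedge\cdots\wedge dx_{n+1}$ with the vector field $V=\nu-(f-f(z_{0}),0)$; its pointwise comass on $W$ satisfies $|V|\le 1+d^{\alpha}[f]_{\alpha}$, and on the oriented unit tangent $n$-vector $\tau$ of $\graph u$ one has $\omega^{*}(\tau)\ge\omega(\tau)-d^{\alpha}[f]_{\alpha}|\tau|=(1-d^{\alpha}[f]_{\alpha})|\tau|$. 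Moreover, a direct computation using \ref{MCE*} shows the distributional identity $d\omega^{*}=H\,dx_{1}\wedge\cdots\wedge dx_{n+1}$ on $W$.

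Since $\spt\partial R\subset W$, the currents $T$ and $S$ coincide outside $W$, so it suffices to prove the inequality with $\mass(T),\mass(S)$ replaced by $\mass(T\res W),\mass(S\res W)$. The three properties of $\omega^{*}$ just established combine with $T\res W-S\res W=\partial R$ to give
\[
(1-d^{\alpha}[f]_{\alpha})\mass(T\res W)\le T(\omega^{*})=S(\omega^{*})+\partial R(\omega^{*})\le (1+d^{\alpha}[f]_{\alpha})\mass(S\res W)+\|H\|_{0}\mass(R),
\]
where the last term comes from $\partial R(\omega^{*})=R(d\omega^{*})$ together with the bound $|R(d\omega^{*})|\le\|H\|_{0}\mass(R)$, and adding the trivial identity outside $W$ recovers the stated bound with $\mass(T)$ and $\mass(S)$.

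The principal technical obstacle is evaluating $R(d\omega^{*})$ rigorously when $\omega^{*}$ has only $C^{0,\alpha}$ coefficients, since the identity $d\omega^{*}=H\,dx_{1}\wedge\cdots\wedge dx_{n+1}$ holds only in the distributional sense coming from the weak equation \ref{MCE*}. The remedy is to mollify: take $\omega^{*}_{\epsilon}=\omega^{*}*\rho_{\epsilon}$, so that $d\omega^{*}_{\epsilon}=H^{\epsilon}\,dx_{1}\wedge\cdots\wedge dx_{n+1}$ with $\|H^{\epsilon}\|_{0}\le\|H\|_{0}$; apply the smooth calibration identity $\partial R(\omega^{*}_{\epsilon})=R(d\omega^{*}_{\epsilon})$ and let $\epsilon\to0$, using the uniform convergence of $\omega^{*}_{\epsilon}$ to $\omega^{*}$ on compact subsets of $W$ together with the finite mass of $R$ and $\partial R$ to pass to the limit and obtain the desired bound.
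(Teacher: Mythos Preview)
Your argument is the paper's proof repackaged: the paper keeps $\omega$ and the correction form $\omega_{f-f_0}$ separate, deriving the identity $T(\omega)-S(\omega)=\int_{\spt R}\Theta\,H\,dx+\partial R(\omega_{f-f_0})$, whereas you bundle them into the single calibrating form $\omega^{*}=\omega-\omega_{f-f_0}$ with $d\omega^{*}=H\,dx$, which is algebraically the same identity (note that, as in the paper, the $f$ entering the correction must be the $x_{n+1}$-independent extension $x\mapsto f(x',u(x'))$ for \ref{MCE*} to yield $d\omega^{*}=H\,dx$). The mollification you describe to handle the $C^{1,\alpha}$ regularity is likewise exactly the paper's approximation $\omega_\sigma=\zeta_\sigma*\omega$ followed by passage to the limit.
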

\begin{proof}
Note first that if $u$ is smooth then $\o$ (as defined in \ref{omega}) is a smooth $n$-form and hence
\[\begin{split}T(\o)-S(\o)&=\partial R(\o)= R( d\o)= \int_{\spt R}\Theta(x) \dvg \nu(x) dx\\
&=\int_{\spt R}\Theta(x) \left(H(x', u(x'))+\dvg f(x',u(x')) \right)dx'dx_{n+1}\\
&=\int_{\spt R}\Theta(x) \left(H(x', u(x'))+\dvg (f(x',u(x'))-f(x_0) \right)dx'dx_{n+1}\end{split}\]
where $x_0$ is any given point in $W$, $\Theta(x)$ depends on the orientation of $R$, in particular
\[\Theta(x)=<\vec{R}(x),dx_1\wedge\dots\wedge dx_{n+1}>\,\in\{1,-1\}\]
and
recall that for any point $x\in\R^{n+1}$ we use the notation $x=(x', x_{n+1})$.

Hence we get that
\begin{equation*}\label{tso}
T(\o)-S(\o)=\int_{\spt R} \Theta(x) H(x',x_{n+1})dx'dx_{n+1}+\partial R(\o_{f-f_0})
\tag{(1)}
\end{equation*}
where
\[\omega_{f-f_0}=\sum_{i=1}^{n+1}(-1)^{i+1}e_i\cdot\left({f-f(x_0)}\right) dx_1\wedge\dots\wedge\widehat{dx_i}\wedge\dots\wedge
dx_{n+1}\]
which implies the lemma, since $T(\o)=\mass (T)$ and $|\o_{f-f_0}|\le d^\a[f]_{\a, B^n_r(0)\times\R}$ everywhere in $W$.

For the general case, when $u$ is $C^{1,\a}$ it suffices to show that \ref{tso} is still true. For this reason we will approximate $\o$ by smooth $n$-forms.

Let $\zeta\in C^\infty_c(\R^{n+1})$ be such that $\spt\zeta\subset B^{n+1}_1(0)$, $\zeta\ge 0$, $\int_{\R^{n+1}}\zeta(x)dx=1$. For $\s\in (0,1)$ we let $\zeta_\s(x)=\s^{-(n+1)}\zeta(x/\s)$ and consider the $n$-form
\[\o_\s=\zeta_\s*\o=\sum_{i=1}^{n+1}(-1)^{i+1}(\zeta_\s*\nu_i)dx_1\wedge\dots\wedge\widehat{dx_i}\wedge\dots\wedge
dx_{n+1}\]
where $\nu, \o$ are as defined in \ref{nu}, \ref{omega}.

Then $\o_\s$ is a smooth $n$-form and hence
\begin{align*}
T(\o_\s)-S(\o_\s)&=\partial R(\o_\s)=R(d\o_\s)\\
&=\int_{\spt R}\Theta(x)\int_{B^{n+1}_\s(x)}- D_y\left(\zeta_\s(x-y)\right)\cdot\nu(y)dy dx.
\end{align*}
Using equation \ref{MCE*} (the weak form of the prescribed mean curvature equation) we get
\begin{align*}
T(\o_\s)-S(\o_\s)=&-\int_{\spt R}\Theta(x)\int_{B^{n+1}_\s(x)}\zeta_\s(x-y) H(y',u(y'))dydx\\
&-\int_{\spt R}\Theta(x)\int_{B^{n+1}_\s(x)}D_x\zeta_\s(x-y)\cdot \left(f(y',u(y'))-f(x_0)\right)dydx
\end{align*}
where we have used the fact that
\[\int_{B^{n+1}_\s(x)}D_x\zeta_\s(x-y)\cdot f(x_0)dy=0.\]

Hence we have that
\begin{equation*}
T(\o_\s)-S(\o_\s)=-\int_{\spt R}\Theta(x)\int_{B^{n+1}_\s(x)}\zeta_\s(x-y) H(y',u(y'))dydx-\partial R(\zeta_\s*\o_{f-f_0})
\end{equation*}
which by letting $\s\to 0$ implies \ref{tso}.
\end{proof}

\begin{lemma}\label{minimizing property of Tk}
Assume $u\in C^{1,\a}(\ov{\W})$ is a \emph{(}weak\emph{)} solution of the Dirichlet problem \emph{\ref{PMC dirichlet problem with divergence term}} and let $T=[\![\graph u ]\!]$ be the corresponding multiplicity \emph{1}, n-current. Then for any $x_0\in B^n_r(0)\times\R$ and $\r>0$ such that $B^{n+1}_\r(x_0)\subset B^n_r(0)\times\R$ and $\r^\a [f]_{\a,B^n_r(0)\times\R}<1/4$:
\begin{equation*} 
\mass(T\res B^{n+1}_\r(x_0))\le c\left(1+\r\|H\|_{0,B^n_r(0)\times\R}\right)\o_n\r^n
\end{equation*}
and
\begin{equation*}
\mass_W(T)\le\mass_W(S)+c\o_n\r^n\left(\r\|H\|_{0,B^n_r(0)\times\R}+\r^\a[f]_{\a,  B^n_r(0)\times\R}\right)
\end{equation*}
for any $W\subset\subset B^{n+1}_\r(x_0)$ and $S$ an integral $n$-current in $\R^{n+1}$ with $\partial S=\partial T$ and $\spt (T-S)$ a compact subset of $W\cap(\ov{\W}\times\R)$ and where $c$ is an absolute constant.
\end{lemma}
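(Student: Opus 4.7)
Both inequalities will be derived from \textbf{Lemma \ref{dvg-lemma}} via careful choices of competitor $S$ and filling $R$ with $\partial R=T-S$.

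I would first set up the common framework: given any integral competitor $S$ with $\partial S=\partial T$ and $\spt(T-S)$ compactly contained in $W\cap(\ov\W\times\R)\subset\subset B^{n+1}_\r(x_0)$, the cycle $T-S$ is a compactly supported integer $n$-cycle in $\R^{n+1}$; by codimension-one uniqueness it bounds a unique compactly supported integer $(n+1)$-current $R$. Every connected component of $\R^{n+1}\setminus(\ov\W\times\R)=(\R^n\setminus\ov\W)\times\R$ is unbounded in the $x_{n+1}$-direction, and $R$ (as an integer function) vanishes on unbounded components of the complement of $\spt(T-S)$; combined with $\spt(T-S)\subset\ov{B^{n+1}_\r(x_0)}$, this gives $\spt R\subset\ov{B^{n+1}_\r(x_0)}\cap\ov\W\times\R$, so that Lemma \ref{dvg-lemma} applies with $d\le 2\r$. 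Because $T$ and $S$ agree outside $W$ and $\o$ is a calibration for $T$, the contributions from $W^c$ cancel in $T(\o)-S(\o)$, and the proof of Lemma \ref{dvg-lemma} localizes to yield
\[(1-(2\r)^\a[f]_{\a,B^n_r(0)\times\R})\mass_W(T)\le(1+(2\r)^\a[f]_{\a,B^n_r(0)\times\R})\mass_W(S)+\|H\|_{0,B^n_r(0)\times\R}\mass(R).\]

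For the \textbf{volume bound}, I would apply this with the specific competitor $S=T-T\res B^{n+1}_\sigma(x_0)+S'_\sigma$ for a.e.\ $\sigma\in(0,\r)$, where $S'_\sigma$ is an integer $n$-chain on $\partial B^{n+1}_\sigma(x_0)\cap\ov\W\times\R$ with $\partial S'_\sigma=T_\sigma:=\langle T,|\cdot-x_0|,\sigma\rangle$, of mass $\le c\sigma^n$ (produced by the constancy theorem on the sphere, by choosing the smaller of the two complementary $n$-chains bounded by $T_\sigma$, of mass at most $\tfrac{1}{2}\H^n(\partial B^{n+1}_\sigma(x_0))$). The filling $R$ then lies in $\ov{B^{n+1}_\sigma(x_0)}$ and satisfies $\mass(R)\le\o_{n+1}\sigma^{n+1}$, so the localized inequality at $\sigma=\r$ gives exactly the desired bound $\mass(T\res B^{n+1}_\r(x_0))\le c(1+\r\|H\|_{0,B^n_r(0)\times\R})\o_n\r^n$. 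For the \textbf{almost-minimizing inequality}, for a general competitor $S$ I would instead construct $R$ by coning $T-S$ from $x_0$, giving $\mass(R)\le\tfrac{\r}{n+1}\mass(T-S)\le c\r(\mass_W(T)+\mass_W(S))$. Without loss of generality $\mass_W(S)\le\mass_W(T)$ (otherwise the inequality is trivial); the volume bound just proved gives $\mass_W(T)\le c\o_n\r^n(1+\r\|H\|_0)$ and hence $\mass(R)\le c\o_n\r^{n+1}(1+\r\|H\|_0)$. Substituting into the localized inequality, and using $\r^\a[f]_\a<1/4$ to absorb the $\mass_W(T)$ term on the left, produces the claimed error $c\o_n\r^n(\r^\a[f]_{\a,B^n_r(0)\times\R}+\r\|H\|_{0,B^n_r(0)\times\R})$.

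The \textbf{main obstacle} will be constructing the spherical cap $S'_\sigma$ with support in $\ov\W\times\R$ when $\partial B^{n+1}_\sigma(x_0)$ is not entirely contained in $\W\times\R$ (the boundary case where $B^{n+1}_\r(x_0)$ meets $\partial\W\times\R$). This requires using the $C^{1,\a}$ regularity of $\partial\W$ to analyze the topology of $\partial B^{n+1}_\sigma(x_0)\cap\ov\W\times\R$ and to adapt the constancy-theorem argument to this constrained domain; in the interior case $B^{n+1}_\r(x_0)\subset\W\times\R$, the construction is immediate.
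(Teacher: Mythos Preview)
Your overall strategy is correct and matches the paper's: both assertions follow from Lemma~\ref{dvg-lemma} with appropriate choices of competitor. The almost-minimizing inequality you derive is essentially identical to the paper's, and your care in bounding $\mass(R)$ for a general competitor (via coning, then invoking the already-proved volume bound under the harmless assumption $\mass_W(S)\le\mass_W(T)$) is arguably more explicit than the paper, which writes down $\mass(R)\le 2\o_n\r^{n+1}$ in its inequality (1) without comment.

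The genuine difference is in the volume bound, and here the paper's route is both simpler and sidesteps exactly the obstacle you flag. Rather than building a spherical cap $S'_\sigma$ on $\partial B^{n+1}_\sigma(x_0)\cap(\ov\W\times\R)$---which, as you note, requires a nontrivial constancy/topology argument when the ball meets $\partial\W\times\R$---the paper exploits the fact that $T$ is a \emph{graph}: it takes $R=U_\sigma:=\{x_{n+1}<u(x')\}\cap B^{n+1}_\sigma(x_0)$, the subgraph region sliced by the ball. This is automatically a multiplicity-one $(n+1)$-current supported in $\ov\W\times\R$ with $\mass(R)\le\o_{n+1}\sigma^{n+1}$, and the competitor $S=T-\partial U_\sigma$ is supported on $\partial B^{n+1}_\sigma(x_0)\cup\big((\partial\W\times\R)\cap B^{n+1}_\sigma(x_0)\big)$; its mass is then bounded by $c\o_n\sigma^n$ using only the area of the sphere and the $C^{1,\a}$ control $\k_{(\W,\Phi)}<1$ on the cylinder piece. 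No separate construction of a cap, and no analysis of the constrained sphere, is needed. Your approach would work for a non-graphical $T$, but for the present lemma the subgraph trick is the cleaner choice and you should use it.
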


\begin{proof}
Since $\spt (S-T)\subset  B_\r^{n+1}(x_0)\cap\left((B^n_r(0)\cap\ov{\W})\times\R\right)$, by Lemma \ref{dvg-lemma} and using the assumption $\r^\a [f]_{\a,B^n_r(0)\times\R}<1/4$ we have that
\begin{align*}\label{mass inequality}
\left(1-2\r^\a[f]_{\a, B^n_r(0)\times\R}\right)\mass_W(T)\le \mass_W(S)+2\o_n\r^{n+1}\|H\|_{0,B^n_r(0)\times\R}
\tag{(1)}
\end{align*}

Let $U=\{(x',x_{n+1})\in\W\times\R:x_{n+1}\le u(x')\}$, i.e. $U$ is the region under the graph of $u$ and let also $U_\s= U\cap B^{n+1}_\s(x_0)$. By Sard's theorem, for almost all $\s>0$, $\mass(T\res\partial B^{n+1}_\s(x_0))=0$. For such $\s\le\r$, let $S=T\res B^{n+1}_\s(x_0)-\partial U_\s$. Then 
$\spt S\subset \partial B^{n+1}_\s(x_0)\cup\left((\partial\W\times\R)\cap B_\s^{n+1}(x_0)\right)$ and so
\[\mass(S)\le 4\o_n\s^n+\o_n \s^n(1+\k)\le 6\o_n\s^n\]
where $\k=\k_{(\W,\Phi)}$ is as in Definition \ref{ar regular class}.
Using this in inequality \ref{mass inequality} we have that
\[\mass(T\res B^{n+1}_\s(x_0))\le 12(1+\s\|H\|_{0,B^n_r(0)\times\R})\o_n\s^n\]
which gives the first assertion of the lemma.

Taking $\s\in (0,\r]$ such that $W\subset B^{n+1}_\s(x_0)$ we have that
\[\mass_W(T)\le 12(1+\r\|H\|_{0,B^n_r(0)\times\R})\o_n\r^n\]
and using this estimate back in the inequality \ref{mass inequality} we get
\[\begin{split}\mass_W(T)\le \mass_W(S)&+24\o_n\r^n(1+\r\|H\|_{0,B^n_r(0)\times\R})\r^\a[f]_{\a, B^n_r(0)\times\R}\\
&+2\o_n\r^{n+1}\|H\|_{0,B^n_r(0)\times\R}\end{split}\]
which implies the second assertion of the lemma.
\end{proof}

\begin{definition}\label{sclose}
Let $M$ be an $n$-dimensional manifold in $\R^{n+1}$, $x\in M$ and $P$ an $n$-dimensional linear space passing through $x$. We say that $M$ is $\s$-close to $P$ in $B^{n+1}_\r(x)$ if
\[M\cap B^{n+1}_\r(x)\subset q(Q_{\r,\s})\]
for some orthogonal transformation $q$ of $\R^{n+1}$ such that $q(0)=x$, $q(\{0\}\times\R^n)=P$ and
where 
\[Q_{\r,\s}=[-\s\r,\s\r]\times B_\r^n(0).\]

\end{definition}

\begin{lemma}\label{connectivity lemma}
Let $u\in C^{1,\a}(\ov{\W})$ be a \emph{(}weak\emph{)} solution of the Dirichlet problem \emph{\ref{PMC dirichlet problem with divergence term}}. Let $x_0\in\graph u\cap (B^n_{r/2}(0)\times\R)$ and $\r\in(0,r/2]$ be such that $B^{n+1}_\r(x_0)\subset B^n_{r/2}(0)\times\R$, $B^{n+1}_\r(x_0)\cap\graph\phi=\emptyset$ and $\r^\a[f]_{\a,B^n_{r}(0)\times\R}<1/4$. Then
\[\H^n(\graph u\cap q(Q_{\r,\s}))\le (1+3\r^\a [f]_{\a,B^n_r(0)\times\R})\o_n\r^n+c\s\o_n\r^{n}(n+\r\|H\|_{0,B^n_r(0)\times\R})\]
for any $\s\in(0,\r)$ and any orthogonal transformation of $\R^{n+1}$, $q$, such that $q(0)=x_0$ and where $c$ is an absolute constant.

\end{lemma}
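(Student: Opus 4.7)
First, by composing with $q^{-1}$, assume $q = \id$ and $x_0 = 0$, so that $Q := Q_{\r,\s} = [-\s\r, \s\r] \times B^n_\r(0)$ and the goal is to bound $\mass(T \res Q) = \H^n(\graph u \cap Q)$; let $D_0 = \{0\} \times B^n_\r(0)$ denote the central $n$-disk. The overall strategy is to apply a localized version of the near-minimizing inequality from the proof of Lemma \ref{dvg-lemma} to a competitor $S$ that differs from $T$ only inside $Q$, together with an $(n+1)$-current $R$ supported in $Q \cap (\ov\W \times \R)$ satisfying $T - S = \partial R$.

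The competitor $S$ is constructed by replacing $T \res Q$ with the multiplicity-one disk $[\![D_0]\!]$ plus an integral $n$-current $L$ supported on the cylindrical side $[-\s\r,\s\r] \times \partial B^n_\r(0)$ of $\partial Q$, chosen so that $\partial L = \partial(T \res Q) - \partial [\![D_0]\!]$. Taking $L$ to be the straight-line homotopy in the $e_1$-direction from $\partial D_0 = \{0\} \times \partial B^n_\r(0)$ to the slice of $T$ on the cylindrical side yields $\mass(L) \le \s\r \cdot \mass(T \cap \partial Q) \le cn\s\o_n\r^n$ via the coarea formula; any pieces of $T \cap \partial Q$ lying on the top or bottom of $Q$ are absorbed into $L$ at cost of the same order. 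Hence $\mass(S \res Q) \le \o_n\r^n + cn\s\o_n\r^n$. Since $T - S$ is then an integral $n$-cycle supported in $Q \cap (\ov\W \times \R)$, the integral filling theorem produces the required $R$ there, with $\mass(R) \le \H^{n+1}(Q \cap (\ov\W \times \R)) \le 2\s\o_n\r^{n+1}$.

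With $S$ and $R$ in place, I would re-derive the main identity of Lemma \ref{dvg-lemma}'s proof in localized form: since $T$ and $S$ agree outside $Q$, subtracting the common outside contributions gives $(1 - d^\a [f]_{\a,B^n_r(0)\times\R}) \mass(T \res Q) \le (1 + d^\a [f]_{\a,B^n_r(0)\times\R}) \mass(S \res Q) + \|H\|_{0,B^n_r(0)\times\R} \mass(R)$, where $d = \diam W$ for a small open neighborhood $W \supset \spt R$. Because $\diam Q \le 2\r\sqrt{1+\s^2} \le 3\r$ and hence $d^\a [f]_\a \le 3\r^\a [f]_\a < 3/4$ by hypothesis, the ratio $(1 + d^\a[f]_\a)/(1 - d^\a[f]_\a)$ can be estimated by $1 + C\r^\a[f]_\a$ for an absolute constant $C$; substituting the mass bounds and absorbing the $\s$-multiplied cross-terms into the correction yields the desired estimate, with the $(1+3\r^\a[f]_\a)\o_n\r^n$ bulk term coming from this ratio applied to $\o_n\r^n$ and the $c\s\o_n\r^n(n+\r\|H\|_0)$ correction arising from $\mass(L)$ and $\|H\|_{0,B^n_r(0)\times\R}\mass(R)$.

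The main obstacle is producing the filling $L$ with the sharp bound $cn\s\o_n\r^n$. Intuitively the slice $T \cap \partial Q$ should be homologous, inside the thin cylinder $[-\s\r,\s\r] \times \partial B^n_\r(0)$, to the generator $\partial D_0$ of its $(n-1)$st homology, and the natural straight-line homotopy realizes this homology at cost $\s\r$ times the slice length; however the slice may be a complicated integral $(n-1)$-cycle with pieces on the top or bottom of $Q$, so making the construction rigorous requires the slicing theorem for integral currents together with a careful coarea bound on $T \cap \partial Q$. Once this is in hand, the existence and mass bound for $R$ and the localized application of Lemma \ref{dvg-lemma} are routine.
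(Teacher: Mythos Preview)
Your overall plan is in the right spirit, but the construction of $L$ has a genuine gap, and the paper's argument sidesteps it entirely by a much simpler choice of $R$.

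The issue is your claim that pieces of $\partial(T\res Q)$ lying on the top or bottom faces of $Q$ can be ``absorbed into $L$ at cost of the same order.'' They cannot: if $\graph u$ exits $Q$ through a region $A$ of the top face, then any $n$-current $L$ with $\partial L = \partial(T\res Q)-\partial[\![D_0]\!]$ must (by constancy on the top face) carry mass at least $|A|$, and $|A|$ can be as large as $\o_n\r^n$, not $O(\s\o_n\r^n)$. So in general your bound $\mass(S\res Q)\le \o_n\r^n + cn\s\o_n\r^n$ is false, and with it the coefficient $1$ in front of $\o_n\r^n$ in the conclusion. Your invocation of the integral filling theorem for $R$ is also not quite what you want: that theorem gives an isoperimetric bound $\mass(R)\le c\,\mass(T-S)^{(n+1)/n}$, not the slab-volume bound; the volume bound requires a constancy argument instead.

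The paper avoids all of this by exploiting that $T$ is a graph. The slab $q(Q_{\r,\s})$ is divided by $\graph u$ into the two regions $Q^{+}$ (above the graph) and $Q^{-}$ (below). A one-line pigeonhole shows that for one of them, say $Q^{+}$, the portion of $\partial Q^{+}$ lying on the two flat faces $q(\{\pm\s\r\}\times B^n_\r(0))$ has total $\H^n$-measure at most $\o_n\r^n$. One then takes $R=[\![Q^{+}]\!]$ directly in Lemma~\ref{dvg-lemma}: the competitor $S=T-\partial R$ satisfies $\mass_W(S)\le \o_n\r^n + \H^n(\text{cylindrical side}) = \o_n\r^n + 2n\s\o_n\r^n$, while $\mass(R)$ is trivially bounded by the volume $2\s\o_n\r^{n+1}$ of the slab. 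No slicing, no homotopy, no filling theorem is needed, and the pigeonhole is exactly what produces the correct leading coefficient $1$ (rather than $2$ or $3$) in front of $\o_n\r^n$.
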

\begin{proof}

Let $q$ be an orthogonal transformation of $\R^{n+1}$ and let $Q^\pm$ be the regions in $q(Q_{\r,\s})$ that lie above and below the graph of $u$, i.e.
\[Q^+=\{x=(x',x_{n+1})\in q(Q_{\r,\s}):x_{n+1}> u(x)\}\]
\[Q^-=\{x=(x',x_{n+1})\in q(Q_{\r,\s}):x_{n+1}< u(x)\}.\]

Notice that for one of the $\partial Q^\pm$, say $\partial Q^+$, we know that
\[ \left|\partial Q^+\cap \left(q\left(\{\s\r\}\times B^n_\r(0)\right)\cup q\left(\{-\s\r\}\times B^n_\r(0)\right)\right)\right| \le\o_n\r^n.\] 
Then the lemma is a direct consequence of Lemma \ref{dvg-lemma}, applied with $Q^+$ in place of $R$.
\end{proof}

\begin{rmk}\label{rmk of connectivity lemma}
If in addition to the hypotheses of \emph{Lemma \ref{connectivity lemma}} we have that $\graph u\cap B^{n+1}_\r(x_0)$ is $\s$-close to a plane $($in the sense of \emph{Definition \ref{sclose}}$)$ then \emph{Lemma \ref{connectivity lemma}} eventually gives
\[\H^n(\graph u\cap B^{n+1}_\r(x_0))\le (1+3\r^\a [f]_{\a,B^n_r(0)\times\R})\o_n\r^n+c\s\o_n\r^{n}(n+\r\|H\|_{0,B^n_r(0)\times\R}).\]
\end{rmk}

\begin{lemma}\label{small enclosed volume}
Let $u\in C^{1,\a}(\ov{\W})$ be a \emph{(}weak\emph{)} solution of the Dirichlet problem \emph{\ref{PMC dirichlet problem with divergence term}}. Let $x_0\in\graph u\cap (B^n_r(0)\times\R)$ and $\r>0$ be such that $B^{n+1}_\r(x_0)\subset B^n_r(0)\times\R$, $B^{n+1}_\r(x_0)\cap\graph \phi=\emptyset$ and $\r^\a[f]_{\a, B^n_r(0)\times\R}\le 1/2$. Then
\begin{equation*}\label{small enclosed volume eqn}
\H^{n+1}(U^\pm\cap B^{n+1}_\r(x_0))\ge c\r^{n+1}
\tag{(1)}
\end{equation*}
where $U^\pm$ are the regions of $\W\times\R$ that lie above $(U^+)$ and below $(U^-)$ the graph of $u$  and
\begin{equation*}\label{lower bound for area}
\H^{n}(\graph u\cap B^{n+1}_\r(x_0))\ge c\r^{n}.
\tag{(2)}
\end{equation*}
The constant $c$ in both inequalities depends on $\|H\|_{0,B^n_r(0)\times\R}$ and $n$.
\end{lemma}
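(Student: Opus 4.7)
I will prove $(1)$ first and deduce $(2)$ from it. Set $V^\pm(s)=\H^{n+1}(U^\pm\cap B^{n+1}_s(x_0))$ and $A(s)=\H^n(\graph u\cap B^{n+1}_s(x_0))$ for $s\in(0,\r]$. The strategy for $(1)$ is to derive a first-order differential inequality for $V^+$ by combining the Euclidean isoperimetric inequality with the almost-minimizing estimate of Lemma \ref{minimizing property of Tk}, and then solve it by comparison with the explicit polynomial subsolution $V_0(s)=as^{n+1}$, using the $C^{1,\a}$ regularity of $u$ to match the initial condition.

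To derive the ODI, apply Lemma \ref{minimizing property of Tk} at scale $s$ with competitor $S=T-\partial[\![U^+\cap B^{n+1}_s(x_0)]\!]$; the orientations at the graph match because the outward normal of $U^+$ there is the downward-pointing normal to $\graph u$. Letting the enclosing neighborhood $W$ shrink to $\ov B^{n+1}_s(x_0)$ yields
\[A(s)\le\H^n(U^+\cap\partial B^{n+1}_s(x_0))+c\o_n s^n(s\|H\|_0+s^\a[f]_\a).\]
Applying the isoperimetric inequality to the set of finite perimeter $U^+\cap B^{n+1}_s(x_0)$ and using the coarea identity $\H^n(U^+\cap\partial B^{n+1}_s(x_0))=(V^+)'(s)$ for a.e. $s$ gives
\[V^+(s)^{n/(n+1)}\le 2c(n)(V^+)'(s)+C_1 s^n,\]
where $C_1$ depends on $\|H\|_0$ and $n$ after using $\r^\a[f]_\a\le 1/2$. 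Choose $a\in(0,\o_{n+1}/4]$ satisfying $2c(n)(n+1)a+C_1\le a^{n/(n+1)}$ (such an $a$ exists in the relevant range of $C_1$). Then $V_0(s)=as^{n+1}$ is a subsolution of the ODI; since $u\in C^{1,\a}(\ov\W)$ implies that $\graph u$ has a tangent plane at $x_0$, the density of $U^+$ at $x_0$ equals $1/2$ and hence $V^+(s_0)\ge as_0^{n+1}$ for all sufficiently small $s_0$. The standard comparison principle for the ODI (whose right-hand side is monotone in $V$) then forces $V^+(s)\ge as^{n+1}$ on $[s_0,\r]$, giving $V^+(\r)\ge a\r^{n+1}$. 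The identical argument applied to $U^-$ completes $(1)$.

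For $(2)$, apply the relative isoperimetric inequality in $B^{n+1}_\r(x_0)$ to $E=U^+\cap B^{n+1}_\r(x_0)$: since by $(1)$ both $V^+(\r)\ge c\r^{n+1}$ and $V^-(\r)\ge c\r^{n+1}$, and $V^-(\r)\le \o_{n+1}\r^{n+1}-V^+(\r)$, the minimum of $V^+(\r)$ and $\o_{n+1}\r^{n+1}-V^+(\r)$ is at least $c\r^{n+1}$. Relative isoperimetric then yields $A(\r)\ge c(n)(c\r^{n+1})^{n/(n+1)}=c'\r^n$. The main technical obstacle is the ODE comparison step in $(1)$: one must verify the existence of a valid subsolution constant $a$ for the given $C_1$ and then rigorously justify the nonlinear comparison principle, exploiting the strict positivity of both $V^+$ and $V_0$ on $[s_0,\r]$ to compensate for the failure of global Lipschitz regularity of $V\mapsto V^{n/(n+1)}$ near $0$.
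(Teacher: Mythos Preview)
Your argument for $(2)$ via the relative isoperimetric inequality is correct and in fact simpler than the paper's route, which instead reflects $G_\rho$ across a well-chosen hyperplane $P_v$ through $x_0$ (one for which at most a quarter of the subgraph volume lies above $P_v$), then bounds the area from below by the $\H^n$-measure of the projection onto $P_v$.

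For $(1)$, however, your choice of input creates a real obstruction. You invoke Lemma~\ref{minimizing property of Tk}, whose error term is $c\o_n s^{n}(s\|H\|_0+s^{\a}[f]_{\a})$; after the isoperimetric step this produces the additive $C_1 s^{n}$ in your ODI with $C_1$ bounded below by a fixed dimensional constant (coming from $s^{\a}[f]_{\a}\le 1/2$) plus a contribution of order $\rho\|H\|_0$. The subsolution condition $2c(n)(n+1)a+C_1\le a^{n/(n+1)}$ then forces $C_1$ below the maximum of $a\mapsto a^{n/(n+1)}-2c(n)(n+1)a$, a number depending only on $n$; there is no reason this holds once $\rho\|H\|_0$ is moderately large, and your parenthetical ``in the relevant range of $C_1$'' does not close this gap. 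The tangent-plane matching of initial data and the nonlinear comparison you sketch are both fine, but they are moot if no admissible $a$ exists.

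The paper avoids this by going back one step and applying Lemma~\ref{dvg-lemma} directly with $R=U_\rho$, which yields the sharper bound
\[
\H^n(G_\rho)\le 3\frac{d}{d\rho}\H^{n+1}(U_\rho)+2\|H\|_{0}\,\H^{n+1}(U_\rho),
\]
i.e.\ the error is proportional to $V$ itself rather than to $s^{n}$. That term is absorbable into the isoperimetric left side by the dichotomy ``either $V$ already exceeds a threshold depending on $\|H\|_0,n$ (done), or $2c(n)\|H\|_0 V\le \tfrac12 V^{n/(n+1)}$''. One is then left with the clean separable inequality $V^{n/(n+1)}\le c\,V'$, equivalently $(V^{1/(n+1)})'\ge c$, which integrates from $0$ with $V(0)=0$. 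No subsolution constant, no initial-condition matching, and no comparison principle are needed.
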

\begin{proof}
We first give the proof of \ref{small enclosed volume eqn} for $U^-$; the argument for $U^+$ is similar.

 Let
 \begin{equation*}\label{ur}
U_\r=U^-\cap B^{n+1}_\r(x_0)\,,\,G_\r=\graph u\cap B^{n+1}_\r(x_0).
\tag{(3)}
\end{equation*}
By Lemma \ref{dvg-lemma} (with $ U_\r$ in place of $R$) we get that
\begin{equation*}\label{gr}
\H^n(G_\r)\le 3\frac{d}{d\r}\H^{n+1}(U_\r)+2 \|H\|_{0,B^n_r(0)\times\R}\H^{n+1}(U_\r).
\tag{(4)}
\end{equation*}
Since
\[\H^n(\partial U_\r)\le \H^{n}(G_\r)+\frac{d}{d\r}\H^{n+1}(U_\r)\]
the isoperimetric inequality for $U_\r$ implies that
\begin{equation*}\label{iso}
\H^{n+1}(U_\r)^\frac {n}{n+1}\le c(n)\H^n(\partial U_\r)\le c(n)\left(\H^{n}(G_\r)+\frac{d}{d\r}\H^{n+1}(U_\r)\right).
\tag{(5)}
\end{equation*}
Using the estimate \ref{gr} in \ref{iso} we get
\[\H^{n+1}(U_\r)^\frac {n}{n+1}\le c\frac{d}{d\r}\H^{n+1}(U_\r)\]
because we can assume that $2c(n)\|H\|_{0,B^n_r(0)\times\R}\H^{n+1}(U_\r)\le\frac12 \H^{n+1}(U_\r)^\frac{n}{n+1}$, where $c(n)$ is the constant from the isoperimetric inequality, since otherwise the lemma is trivially true. 

Hence
\[\frac{d}{d\r}\left(\H^{n+1}(U_\r)^{\frac {1}{n+1}}\right)\ge c\]
and after integrating
\[\H^{n+1}(U_\r)\ge c\r^{n+1}.\]

For proving \ref{lower bound for area} of the lemma we let $U_\r$, $G_\r$ be as defined in \ref{ur} above. By inequality \ref{small enclosed volume eqn} we know that
\[\H^{n+1}(U_\r)\ge c\r^{n+1}.\]

Let $v$ be a unit vector in $\R^{n+1}$ such that 
\begin{equation*}\label{vprop}
v\cdot (0,\dots,0,1)>0.
\tag{(6)}
\end{equation*}
For such a vector $v$ we define
$P_v$ to be the $n$-dimensional affine subspace of $\R^{n+1}$, passing through $x_0$ and normal to $v$, i.e.
\[P_v=\{x\in\R^{n+1}:(x-x_0)\cdot v=0\}\]
and 
$U^+_{\r,v}$ to be the part of $U_\r$ that lies above $P_v$, i.e.
\[U^+_{\r,v}=\{x\in U_\r: (x-x_0)\cdot v>0\}.\]

We claim that it is enough to prove that for some vector $v$, satisfying \ref{vprop}, we have that
\begin{equation*}\label{above P small}
\H^{n+1}(U^+_{\r,v})\le\frac14\H^{n+1}(U_{\r}).\tag{(7)}
\end{equation*}
To see this assume that \ref{above P small} is true for some $v$ and let $G^\pm_{\r,v}$ be the parts of $G_\r$ that lie above ($G^+_{\r,v}$) and below ($G^-_{\r,v}$) the affine subspace $P_v$. Let also
\[\wt{G}_\r=G^-_{\r,v}\cup \refl_{P_v}(G^+_{\r,v})\cup\left(G_\r\cap P_v\right)\]
where $\refl_{P_v}$ denotes the reflection along $P_v$ and let $\wt{U}_\r$ be the region of $B^{n+1}_\r(x_0)$ that lies below $\wt{G}_\r$. Then we have that
\[\H^n(\wt{G}_\r)=\H^n(G_\r)\]
and
\[\H^{n+1}(\wt{U}_\r)\ge\H^{n+1}(U_{\r})-2\H^{n+1}(U^+_{\r,v})\ge\frac12\H^{n+1}(U_{\r}).\]
Furthermore, since $\wt{U}_\r$ lies below $P_v$, we have that
\[\proj_{P_v}(\wt{G}_\r)=\proj_{P_v}(\wt{U}_\r)\]
where $\proj_{P_v}$ denotes the projection onto the affine subspace $P_v$. Finally since
\[\wt{U}_\r\subset\{x-t v:x\in\proj_{P_v}(\wt{U}_\r), 0\le t\le\r\}\]
we have that
\[\r\H^n(\proj_{P_v}(\wt{U}_\r))\ge\H^{n+1}(\wt{U}_\r)\]
and hence
\[\begin{split}\H^n(G_\r)&=\H^n(\wt{G}_\r)\ge\H^n(\proj_{P_v}(\wt{G}_\r))=\H^n(\proj_{P_v}(\wt{U}_\r))\\
&\ge\r^{-1}\H^{n+1}(\wt{U}_\r)\ge\r^{-1}\frac12\H^{n+1}(U_{\r})\ge c\r^n\end{split}.\]

We now need to show that for some vector $v$ that satisfies \ref{vprop}, inequality \ref{above P small} is true.

For any $t\in (-1,1)$ let $v_t=(t,0,\dots,0,\sqrt{1-t^2})$,
\[P_t=\{x\in\R^{n+1}:(x-x_0)\cdot v_t=0\}\]
and
\[U_{\r,t}^+=\{x\in U_\r:(x-x_0)\cdot v_t>0\}\,\,,\,\,U_{\r,t}^-=\{x\in U_\r:(x-x_0)\cdot v_t<0\}.\]
We claim that for some $t$, $\H^{n+1}(U_{\r,t}^+)\le\frac14 \H^{n+1}(U_\r)$.

Assume it is not true. Then for all $t$
\[\H^{n+1}(U_{\r,t}^-)\le\frac14 \H^{n+1}(U_\r)\]
and hence for any $\e>0$
\[\H^{n+1}(U_{\r,1-\e}^+\cap U_{\r,-1+\e}^+)\ge\frac 12\H^{n+1}(U_\r)\]
which is impossible, since
\[\H^{n+1}(U_{\r,1-\e}^+\cap U_{\r,-1+\e}^+)\stackrel{\e\to0}{\longrightarrow}0.\]

\end{proof}

\subsection*{The solutions to \ref{PMC dirichlet problem with divergence term} are uniformly close to planes near the boundary cylinder}

In this paragraph we want to show that given $\e>0$, there exists $\r>0$ depending only on $\e$, $r$, $\|H\|_{0,B^n_r(0)\times\R}$ and $[f]_{\a, B^n_r(0)\times\R}$, such that the graph of $u$ is $\e$-close to some $n$-dimensional linear space in all balls of radius less than $\r$ that intersect the boundary cylinder (cf. Theorem \ref{close-to-plane thm}). The main ingredient is the following lemma:

\begin{lemma}\label{graph current convergence to a plane}
Let $(\W_k,\graph\phi_k)\in \B^\a_{r_k}$ be a sequence such that $r_k\to\infty$, $\k_{(\W_k,\Phi_k)}\to 0$ and $\partial\W_k\cap B_r^n(0)\ne\emptyset$ for some $r\in(0,\infty)$, where $\Phi_k=\graph\phi_k$. Assume that $u_k\in C^{1,\a}(\ov{\W}_k)$ is a \emph{(}weak\emph{)} solution of the corresponding Dirichlet problem \emph{\ref{PMC dirichlet problem with divergence term}}
with $[f_k]_{\a,B^{n}_{r_k}(0)\times\R}\to 0$ and $\|H_k\|_{0, B^{n}_{r_k}(0)\times\R}\to 0$ and let $T_k=[\![\graph u_k ]\!]$.

Then, after passing to a subsequence,
\begin{equation*}\label{rescaled current convergence equation}
T_k\res (B^{n}_{r_k}(0)\times\R)\to T
\tag{(1)}
\end{equation*}
in the weak sense of currents, but also with the corresponding measures converging 
$\mu_{T_k}\to\mu_T$ 
as Radon measures and where for the limit $T$ either
\begin{enumerate}
\item[{(i)}] $\partial T=0$ and $\spt T$ is a vertical hyperplane
\end{enumerate}
or
\begin{enumerate}
\item[{(ii)}] $\partial T\ne 0$ and $\spt T$ is an $n$-dimensional halfspace.
\end{enumerate}

Furthermore for the convergence in \emph{\ref{rescaled current convergence equation}} we have that for any $\e>0$ and $W$ a compact subset of $\R^{n+1}$ such that $W\cap\spt T\ne\emptyset$, there exists $k_0$ such that for all $k\ge k_0$
\begin{equation*}\label{Hausdorff close}
\spt T_k\cap W\subset \e\text{-neighborhood of }\spt T.
\tag{(2)}
\end{equation*}

\end{lemma}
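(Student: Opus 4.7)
\textbf{Compactness and identification of the limit.} The plan is first to establish current-compactness, then identify $T$ as a minimizer in the limiting halfspace cylinder, classify such $T$ via density and cone arguments, and finally strengthen the convergence to Radon measures and to supports. For compactness, Lemma \ref{minimizing property of Tk} provides uniform mass bounds for $T_k$ on any compact set (eventually contained in $B^n_{r_k}(0)\times\R$), and Remark \ref{graphical expression of regular class} together with $\k_{(\W_k,\Phi_k)}\to 0$ controls $\mass(\partial T_k)$ on compacts. The Federer-Fleming compactness theorem then yields, after a subsequence, a weak current limit $T_k\to T$ with $T$ integral. By Remark \ref{AA remark}, a further subsequence satisfies $\W_k\toap H$ for a halfspace $H\subset\R^n$ and $\graph\phi_k\toap\Phi$, with $\Phi$ either empty or a linear $(n-1)$-dimensional subspace of $\partial H\times\R$. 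Weak continuity of $\partial$ gives $\partial T=[\![\Phi]\!]$, and the Hausdorff convergence $\bar\W_k\to\bar H$ forces $\spt T\subset\bar H\times\R$.

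\textbf{Minimality and Radon measure convergence.} Given any integral competitor $S$ with $\spt S\subset\bar H\times\R$, $\partial S=\partial T$, and $\spt(T-S)\subset\subset W$, the $C^{1,\alpha'}$ convergences from Remark \ref{AA remark} supply diffeomorphisms $\Psi_k\to\id$ mapping $(\bar H\times\R,\Phi)$ onto $(\bar\W_k\times\R,\graph\phi_k)$; setting $S_k:=(\Psi_k)_\#S$ produces integral currents with $\partial S_k=\partial T_k$ and $\mass(S_k)\to\mass(S)$. Lemma \ref{minimizing property of Tk} applied to $T_k$ against $S_k$, together with $\|H_k\|_0,[f_k]_\a\to 0$, yields $\limsup_k\mass_W(T_k)\le\mass_W(S)$. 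Choosing $S=T$ and combining with lower semicontinuity gives $\mass_W(T_k)\to\mass_W(T)$, hence $\mu_{T_k}\to\mu_T$ as Radon measures; the inequality with arbitrary $S$ shows that $T$ is minimizing in $\bar H\times\R$ with boundary $[\![\Phi]\!]$.

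\textbf{Classification of $T$ (main obstacle).} This is the crux of the proof. Passing $\rho\to\infty$ in Lemma \ref{minimizing property of Tk} forces $\mass(T\res B^{n+1}_\rho(0))\le\o_n\rho^n$, so the density at infinity is $\le 1$. Integrality and $0\in\spt T$ make the density at $0$ at least $1$, while the monotonicity formula for minimizing currents (interior, and at the linear $C^{1,\alpha}$ boundary via Allard-type boundary monotonicity) makes $\rho^{-n}\mass(T\res B^{n+1}_\rho(0))$ nondecreasing, so the ratio equals $\o_n$ identically, and the equality case forces $T$ to be a cone from $0$. A minimizing integral cone of density $1$ with boundary either $0$ or a linear $(n-1)$-subspace of $\partial H\times\R$ has cross-section $\spt T\cap S^n$ a great $(n-1)$-sphere or a great half-sphere (by the sharp area lower bound for minimal $(n-1)$-submanifolds of $S^n$ with fixed linear boundary), so $\spt T$ is a hyperplane or a halfplane. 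The support constraint $\spt T\subset\bar H\times\R$ then leaves exactly the two options in the statement: $\spt T=\partial H\times\R$ in case (i), and $\spt T$ an $n$-dimensional halfspace with linear boundary $\Phi\subset\partial H\times\R$ in case (ii).

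\textbf{Hausdorff convergence of supports.} Suppose for contradiction that there exist $x_k\in\spt T_k\cap W$ with $d(x_k,\spt T)\ge\e$ along a subsequence; extract $x_k\to x_\infty\in W$ with $d(x_\infty,\spt T)\ge\e$. Since $\graph\phi_k\toap\Phi\subset\spt T$, the support $\spt\partial T_k=\graph\phi_k$ is eventually Hausdorff-close to $\Phi$, so $B^{n+1}_{\e/4}(x_k)\cap\graph\phi_k=\emptyset$ for $k$ large. Inequality \ref{lower bound for area} of Lemma \ref{small enclosed volume} then applies at $x_k$ and yields $\mu_{T_k}(B^{n+1}_{\e/4}(x_k))\ge c\e^n$. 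The Radon measure convergence from the previous step forces $\mu_T(\bar B^{n+1}_{\e/2}(x_\infty))\ge c\e^n>0$, contradicting $B^{n+1}_{\e/2}(x_\infty)\cap\spt T=\emptyset$.
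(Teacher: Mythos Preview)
Your compactness, minimality-in-the-halfspace, Radon measure convergence, and Hausdorff support convergence steps are all sound and track the paper's own argument closely (the paper pushes competitors into $\ov\W_k\times\R$ via a Lipschitz retraction rather than a diffeomorphism, but this is a cosmetic difference).

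The real problem is your classification step. You write that ``passing $\rho\to\infty$ in Lemma~\ref{minimizing property of Tk} forces $\mass(T\res B^{n+1}_\rho(0))\le\o_n\rho^n$''. This is not what the lemma gives: the bound there is $\mass(T_k\res B^{n+1}_\rho)\le c(1+\rho\|H_k\|_0)\o_n\rho^n$ with an \emph{absolute constant} $c>1$ (coming from comparing with a piece of sphere plus a piece of cylinder). After letting $k\to\infty$ you only get $\o_n^{-1}\rho^{-n}\mu_T(B^{n+1}_\rho(0))\le c$, not $\le 1$, so your monotonicity sandwich (density $\ge 1$ at $0$, $\le 1$ at infinity, hence constant) does not close. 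Without it you cannot conclude that $T$ is a cone of density $1$; a priori the tangent cone at infinity could be a higher-multiplicity vertical plane in case (i), or (via Corollaries~\ref{corollary cone union of planes}--\ref{corollary cone in plane}) a current of the form $mP_1+(m-1)P_2$ in case (ii), with $m\ge 2$.

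The paper fills exactly this gap with Lemma~\ref{connectivity lemma}, which exploits that each $T_k$ is the graph of a function: in any thin slab $q(Q_{\rho,\sigma})$ the graph separates the slab into two pieces and one of them has at most half the top/bottom area, so $\H^n(\graph u_k\cap q(Q_{\rho,\sigma}))\le (1+o(1))\o_n\rho^n$. Passing through the measure convergence $\mu_{T_k}\to\mu_T\to\mu_C$ (where $C$ is the tangent cone at infinity) then forces the multiplicity of $C$ on any vertical slab to be $1$. This is the ingredient you are missing: some use of the graphical structure of $T_k$ to rule out sheeting in the limit. Once $m=1$ is established, the density ratio of $T$ is identically $1$ (resp.\ $1/2$ at a boundary point), and the rest of your cone argument goes through.
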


\begin{proof}

We note that given any $\r>0$ and for all $k$ large enough we have that $\r^\a[f_k]_{\a,B^{n}_{r_k}(0)\times\R}<1/4$ and therefore we can apply Lemma \ref{minimizing property of Tk}. This implies that  the currents $T_k\res (B^{n}_{r_k}(0)\times\R)$ have locally uniformly bounded masses. Hence we can apply the Federer-Fleming compactness theorem \cite[Theorem 32.2]{LSgmt} which implies that after passing to a subsequence
\[{T}_k\res (B^{n}_{r_k}(0)\times\R)\to T\]
in the weak sense of currents in $\R^{n+1}$, where $T$ is an integral $n$-current. Furthermore, by Remark \ref{AA remark}, $T$ has support in an $(n+1)$-dimensional closed halfspace. Without loss of generality, we can assume that this halfspace is equal to $\ov{\R}_+\times\R^n=\{x\in\R^{n+1}:x_1\ge 0\}$. According to the Federer-Fleming compactness theorem we also have that $\partial{T}_k=[\![\Phi^{k}]\!]\to \partial T$ and thus (using Remark \ref{AA remark} again) either $\partial T=0$ or $\partial T=[\![\Phi]\!]$, where $\Phi$ is an $(n-1)$-dimensional affine subspace of $\{0\}\times\R^n$.

We claim that $T$ is area minimizing. In view of Lemma \ref{area minimizing in halfspace} it suffices to prove that it is area minimizing in the closed halfspace $\ov{\R}_+\times\R^n$. Note that although the currents ${T}_k$ are not area minimizing, they do satisfy a minimizing property (cf. Lemma \ref{minimizing property of Tk}), which enables us to argue as in the case when $T$ is the limit of area minimizing currents (cf. \cite[Theorem 34.5]{LSgmt}), as follows:

Since ${T}_k\res (B^{n}_{r_k}(0)\times\R)\to T$ in the weak sense of currents, we know that the convergence is also with respect to the flat-metric (cf. \cite[Theorem 31.2]{LSgmt}), i.e. there exist integral $(n+1)$-currents $R_k$ and integral  $n$-currents $P_k$ such that $T-{T}_k\res (B^{n}_{r_k}(0)\times\R)=\partial R_k+P_k$ and for any compact subset $W$ of $R^{n+1}$
\[\mass_W(R_k)+\mass_W(P_k)\to0.\]

Let $S$ be an integral $n$-current such that $\partial S=0$ and $\spt S\subset W\cap (\ov{\R}_+\times\R^n)$, where $W$ is a compact subset of $\R^{n+1}$. Let $W_\e=\{x\in\R^{n+1}:\dist(x,W)<\e\}$. We can choose $\e\in(0,1)$ so that, after passing to a subsequence, we have that for all $k$:
\begin{equation*}\label{zero masses}
\mass({T}_k\res\partial W_\e)=0\,,\, \mass(T\res\partial W_\e)=0
\tag{(3)}
\end{equation*}
and
\begin{equation*}\label{zero masses 2}
\partial(R_k\res W_\e)=(\partial R_k)\res W_\e+L_k
\tag{(4)}
\end{equation*}
where $L_k$ is an integral $n$-current such that $\spt L_k\subset\partial W_\e$ and $\mass(L_k)\to 0$. 
Then
\[T\res W_\e-{T}_k\res W_\e=\partial \wt{R}_k+\wt{P}_k\]
where $\wt{R}_k=R_k\res W_\e$ and $\wt{P}_k= P_k\res W_\e-L_k$, and
\begin{equation*}\label{TS est}
\mass_{W_\e}(T+S)=\mass_{W_\e}({T}_k+S+\partial \wt{R}_k+\wt{P}_k)\ge\mass_{W_\e}(T_k+S_k)-\mass_{W_\e}(\wt{P}_k)
\tag{(5)}
\end{equation*}
where $S_k=S+\partial \wt{R}_k$. For $S_k$ we have that $\spt S_k\subset \ov{W}_\e$ and $\partial S_k=0$. 

Let $R>0$ be such that $\ov{W}_\e\subset B^{n+1}_R(0)$. For $k$ big enough, so that $r_k>R$, let $\ell_k$ be a lipschitz retraction of $(\ov\R_+\times\R^n)\cap B^{n+1}_R(0)$ to $(\ov{\W}_k\times\R)\cap B^{n+1}_R(0)$ such that
\[\frac{|\ell_k(x)-\ell_k(y)|}{|x-y|}\le\k_{(\W_k,\Phi_k)}\,,\,\forall x,y\in \{0\}\times\R^{n}.\]
Then the current $\ell_{k\#}S_k$ has support in $\ov{\W}_k\times\R$ and no boundary, hence using the minimizing property of $T_k$ (Lemma \ref{minimizing property of Tk}) we have that
\[\begin{split}\mass_{{W}_{\e}}({T}_k)&\le \mass_{{W}_{\e}}(T_k+\ell{_{k\#}}S_k)+c\left(R\|H\|_{0,B^n_R(0)\times\R}+R^\a[f]_{\a,  B^n_R(0)\times\R}\right)
\o_nR^n\\
&\le |J\ell_k|\mass_{{W}_{\e'}}(T_k+S_k)+c\left(R\|H\|_{0,B^n_R(0)\times\R}+R^\a[f]_{\a,  B^n_R(0)\times\R}\right)
\o_nR^n\end{split}\]
for any $\e'>\e$ such that $W_{\e'}\subset B^{n+1}_R(0)$ and where $J\ell_k$ denotes the Jacobian of $\ell_k$ and thus $|J\ell_k|<1+c\k_{(\W_k,\Phi_k)}$. Letting $\e'\downarrow \e$ we get
\begin{equation*}\label{epsilon}
\begin{split}\mass_{{W}_{\e}}({T}_k)\le & |J\ell_k|\left(\mass_{{W}_{\e}}(T_k+S_k)+\mass(L_k)\right)\\
&+c\left(R\|H\|_{0,B^n_R(0)\times\R}+R^\a[f]_{\a,  B^n_R(0)\times\R}\right)
\o_nR^n\end{split}
\tag{(6)}
\end{equation*}
where we have used \ref{zero masses} and \ref{zero masses 2}.
Hence, using \ref{epsilon} to estimate $\mass_{{W}_{\e}}(T_k+S_k)$ in \ref{TS est}, we get
\[\begin{split}\mass_{W_\e}(T+S)\ge (1&-c\k_{(\W_k,\Phi_k)})\mass_{W_\e}(T_k)-\mass_{W_\e}(\wt{P}_k)-\mass(L_k)\\
&-c\left(R\|H\|_{0,B^n_R(0)\times\R}+R^\a[f]_{\a,  B^n_R(0)\times\R}\right)\o_nR^n\end{split}\]
where $c$ depends only on $n$. Letting $k\to\infty$ and using the lower semicontinuity of the mass and the fact that $\mass_{W_\e}(\wt{P}_k)\to 0$, $\mass(L_k)\to 0$ we get
\[\mass_{{W}_\e}(T)\le\mass_{{W}_\e}(T+S)\]
which implies that
\[\mass_W(T)\le\mass_W(T+S)\]
since $S=0$ outside $W$, and hence $T$ is area minimizing.

We claim now that $\mu_{T_k}\to\mu_T$ as Radon measures.

Using $S=0$ in the above argument we have that
\[\begin{split}\mass_{W_\e}(T)\ge (1&-c\k_{(\W_k,\Phi_k)})\mass_{W_\e}(T_k)-\mass_{W_\e}(\wt{P}_k)\\
&-c\left(R\|H\|_{0,B^n_R(0)\times\R}+R^\a[f]_{\a,  B^n_R(0)\times\R}\right)\o_nR^n\end{split}\]
and letting $k\to 0$
\[\mass_{W_\e}(T)\ge\limsup_k\mass_{W_\e}({T}_k).\]
Since $W\subset W_\e$
\[\limsup\mu_{T_k}(W)\le\mass_{W_\e}(T)=\mu_T(W_\e)\]
and because we can repeat the argument for $\e\downarrow 0$ we have that
\[\mu_T(W)\ge \limsup_k\mu_{{T}_k}(W)\]
which along with the lower semicontinuity of Radon measures implies the measure convergence.

Next we will show that $T$ is either an $n$-dimensional halfspace or a vertical hyperplane.

Assume first that we are in case (i) $\partial T=0$. This is the case when for any $W\subset\subset\R^{n+1}$, $W\cap \Phi_k=\emptyset$  and hence $\partial T_k\res W=0$, for all $k$ large enough (cf. Remark \ref{AA remark}).

Using the uniform area ratio bounds, Lemma \ref{minimizing property of Tk} and the interior monotonicity formula \cite{Afirstvarofvarifold} we have 
\[1\le \o_n^{-1}r^{-n}\mu_T(B^{n+1}_r(x))=\o_n^{-1}r^{-n}\lim_k\mu_{{T}_k}(B^{n+1}_r(x))\le c\]
for all $x\in\spt T$ and any $r>0$, where $c$ is an absolute constant.

Hence for a sequence $\{\Lambda_i\}\uparrow\infty$ we can apply the Federer-Fleming compactness theorem to the sequence $T_{x,\Lambda_i}=\eta_{x,\Lambda_i\#}T$, where for $x\in \R^{n+1}$ and $\l\in\R$, $\eta_{x,\l}:\R^{n+1}\to\R^{n+1}$ is defined by $\eta_{x,\l}(y)=\l^{-1}(y-x)$. So, after passing to a subsequence,
\[T_{x,\Lambda_i}\to C\]
in the weak sense of currents, where $C$ is an integral $n$-current. Since $T_{x,\Lambda_i}$ are area minimizing, $C$ is an area minimizing cone and $\mu_{T_{x,\Lambda_i}}\to \mu_C$ as radon measures. Furthermore, since $\spt T\subset\ov{\R}_+\times\R^n$ we have that $\spt T_{x,\Lambda_i}\subset\{y\in\R^{n+1}:y_1\ge-\Lambda_i^{-1}x_1\}$, where $x_1, y_1$ denote the first coordinates of $x$ and $y$ respectively, and hence $\spt C\subset \ov{R}_+\times\R^n$. This implies \cite[Theorems 36.5, 26.27]{LSgmt} that 
\[C=m[\![\{0\}\times\R^n]\!]\]
for some integer $m\ge 1$. We claim that in fact $m=1$.

For $\s\in (0,1)$, let $Q_{1,\s}=[-\s,\s]\times B^{n}_1(0)$. Then $\mu_C(Q_{1,\s})=m\o_n$. By the measure convergence $\mu_{T_{x,\Lambda_i}}\to \mu_C$ and $\mu_{T_k }\to \mu_T$, we have that for any $\d>0$ there exists some $\Lambda>0$ and $k_0$ such that for all $k\ge k_0$ 
\[m-\d\le\frac{1}{\Lambda^n\o_n}\mu_{T_k}(x+\Lambda Q_{1,\s}).\]
Using Lemma \ref{connectivity lemma}, the RHS of the above inequality is less than $1+\Lambda^\a[f_k]_{\a, B^n_{r_k}(0)\times\R}+c\s(1+\Lambda\|H_k\|_0)$ and hence taking $\s$ small enough we conclude that $m$ has to be 1.

Hence we get that
\[\o_n^{-1}r^{-n}\mu_T(B^{n+1}_r(x))=1\,,\forall x\in\spt T\text {  and  }r>0\]
which implies that $T$ itself is a hyperplane of multiplicity 1 and since $\spt T\subset\ov{R}_+\times\R^n$ is has to be a vertical hyperplane.

Assume now that we are in case (ii) $\partial T=[\![\Phi]\!]$. In this case $\Phi_k\toap\Phi$ for all $\a'<\a$ and hence $\Phi$ is an $(n-1)$- dimensional linear subspace of $\{0\}\times\R^n$ (cf. Remark \ref{AA remark}). Without loss of generality we can assume that $\Phi=\{0\}\times\R^{n-1}\times\{0\}$.

Using the uniform area ratio bounds, Lemma \ref{minimizing property of Tk} and the boundary monotonicity formula \cite{Abdryregularity} we get
\[\frac12\le \o_n^{-1}r^{-n}\mu_T(B^{n+1}_r(0))=\o_n^{-1}r^{-n}\lim_k\mu_{{T}_k}(B^{n+1}_r(0))\le c\]
for any $r>0$ and where $c$ is an absolute constant.

Hence for a sequence $\{\Lambda_i\}\uparrow\infty$ we can apply the Federer-Fleming compactness theorem to the sequence $T_{\Lambda_i}=\eta_{0,\Lambda_k\#}T$ to conclude (as in case (i)) that after passing to a subsequence
\[T_{\Lambda_i}\to C\]
where $C$ is an area minimizing cone with $\spt C\subset\ov{\R}_+\times\R^n$, $\partial C=\Phi=\{0\}\times\R^{n-1}\times\{0\}$ and also $\mu_{T_{\Lambda_i}}\to \mu_C$ as Radon measures.

Hence we can apply Lemma \ref{cone as a union of planes} and in particular Corollaries \ref{corollary cone union of planes}, \ref{corollary cone in plane} to $C$ to conclude that $C$ is either an $n$-dimensional halfspace or
\[C=mP_1+(m-1)P_2\]
for some integer $m\ge 1$, where $P_1, P_2$ denote the $n$-dimensional halfspaces $\{0\}\times\R^{n-1}\times\R_\pm$. 

We claim that in the latter case $m=1$ and hence $C$ is a halfspace in either case.
To see this, take $x\in\spt C$ such that for $Q_{1,\s}(x)=x+[-\s,\s]\times B_1^n(0)$ we have that $Q_{1,\s}(x)\cap\Phi=\emptyset$ and $\mu_C(Q_{1,\s}(x))=m\o_n$. We can argue now as in case (i) and using the measure convergence $\mu_{T_{\Lambda_i}}\to \mu_C$, $\mu_{T_k}\to\mu_T$ and Lemma \ref{connectivity lemma}  we have that for any $\d>0$ there exists some $\Lambda>0$ and $k_0$ such that for all $k\ge k_0$ 
\[m-\d\le\frac{1}{\Lambda^n\o_n}\mu_{T_k}(x+\Lambda Q_{1,\s}(x))\le 1+ 3\Lambda^\a [f_k]_{\a, B^n_{r_k}(0)\times\R}+c\s(1+\Lambda\|H_k\|_0)\]
and hence taking $\s$ small enough we conclude that $m$ has to be 1.

Hence $C$ is a halfspace and therefore for any $r>0$
\[\o_n^{-1}r^{-n}\mu_T(B^{n+1}_r({0}))=\frac 12\]
so that $T$ is an area minimizing cone with vertex $0$. Hence we can apply Lemma \ref{cone as a union of planes} and Corollaries \ref{corollary cone union of planes}, \ref{corollary cone in plane} to $T$, which along with the fact that the density at $0$ is $1/2$ imply that $T$ is a an $n$-dimensional halfspace.

We finally have to prove statement \ref{Hausdorff close} of the Theorem.

Assume that for some $W\subset\subset\R^{n+1}$ such that $W\cap\spt T\ne\emptyset$ and $\e>0$,  statement \ref{Hausdorff close} of the Theorem is not true. Hence, after passing to a subsequence, we have that for every $k$ there exists $x_k\in\spt T_k\cap W$ such that 
\[B^{n+1}_{\e}(x_k)\cap\spt T=\emptyset.\]
Since either $\Phi_k\cap W=\emptyset$ for $k$ big enough or $\Phi_k\cap (B^{n}_{r_k}(0)\times\R)\toap \Phi$ for all $\a'<\a$, we have (after passing to a further subsequence if necessary) that $B^{n+1}_{\e/2}(x_k)\cap \Phi_k=\emptyset$. Hence, for $k$ big enough so that $B^{n+1}_{\e/2}(x_k)\subset B^n_{r_k}(0)\times\R$ and $(\e/2)^\a[f_k]_{\a,B^n_{r_k}\times\R}\le1/2$, we can apply  \ref{lower bound for area} of Lemma \ref{small enclosed volume} with $x_0=x_k$ to conclude that for any $\r\le\e/2$
\[\mu_{T_k}(B^{n+1}_{\r}(x_k))\ge c \r^n\]
where $c$ depends only on $n$.

Since $x_k\in W\subset\subset\R^{n+1}$ and $B^{n+1}_{\e}(x_k)\cap\spt T=\emptyset$ for all $k$, we have that, after passing to a subsequence, $x_k\to x_0$ for some $x_0\in W$, such that $B_{\e/2}^{n+1}(x_0)\cap \spt T=\emptyset$.  Then for $k$ large enough we have that
\[ B_{\e/4}^{n+1}(x_k)\subset B_{\e/2}^{n+1}(x_0)\Rightarrow \mu_{T_k}(B_{\e/2}^{n+1}(x_0))\ge \mu_{T_k}(B_{\e/4}^{n+1}(x_k))\ge c(\e/4)^n.\]
By the mass convergence $\mu_{T_k}\to\mu_T$, this implies that
\[\mu_T(B_{\e/2}^{n+1}(x_0))>0\]
which contradicts the fact that $B_{\e/2}^{n+1}(x_0)\cap\spt T=\emptyset$.
\end{proof} 

\begin{theorem}\label{close-to-plane thm}
Let $(\W,\Phi)\in\B^\a_{r}$, with $\Phi$ given by the graph of a function; $\Phi=\graph\phi$ and let also $u\in C^{1,\a}(\ov{\W})$ be a \emph{(}weak\emph{)} solution of the Dirichlet problem \emph{\ref{PMC dirichlet problem with divergence term}} with $H$, $f$ satisfying
\begin{equation*}\label{bdrydatacon}
\|H\|_{0,B^n_r(0)\times\R}\le K \,\,,\,\,[f]_{\a, B^n_r(0)\times\R}\le K.\tag{(1)}
\end{equation*}
for some $K>0$.

Then $\forall\,\e>0$, there exists $\r=\r(r,\e,K)<r$ such that the following holds:\\
For any $x\in (B^n_{r/2}(0)\times\R)\cap\graph u$ and $\l\in(0,\r]$ such that
$\dist(x,\partial\W\times\R)<\l$, 

\begin{equation*}\label{Pisclose}
\l ^{-1}\sup_{\graph u\cap B^{n+1}_{\l }(x)}\dist(y-x,P)<\e
\tag{(2)}
\end{equation*}
for some $n$-dimensional linear subspace $P=P(x,\l)$.
\begin{equation*}\label{massclose}
\o_n^{-1}\l^{-n}|\graph u\cap B^{n+1}_\l(x)|\le1+\e.
\tag{(3)}
\end{equation*}

In particular if $x\in\Phi$ then inequality \emph{\ref{Pisclose}} holds with an $n$-dimensional halfspace $P_+=P_+(x, \l)$ in place of $P$, such that $0\in\partial P_+$ and
\[\l^{-1}\sup_{\Phi\cap B^{n+1}_{\l }(x)}\dist(y-x,\partial P_+)<\e\]
and inequality \emph{\ref{massclose}} holds with the \emph{RHS} replaced by $1/2+\e$.
\end{theorem}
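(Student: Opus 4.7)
I would argue by contradiction via a blow-up and compactness argument based on Lemma \ref{graph current convergence to a plane}. Suppose the conclusion fails for some $\e>0$: there exist sequences $(\W_k,\Phi_k=\graph\phi_k)\in\B^\a_r$, weak solutions $u_k\in C^{1,\a}(\ov\W_k)$ of \ref{PMC dirichlet problem with divergence term} with $H_k,f_k$ obeying \ref{bdrydatacon}, points $x_k\in\graph u_k\cap(B^n_{r/2}(0)\times\R)$, and scales $\l_k\to 0^+$ with $\dist(x_k,\partial\W_k\times\R)<\l_k$, such that at $(x_k,\l_k)$ either \ref{Pisclose} holds for no $n$-plane $P$ or \ref{massclose} is violated. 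Let $y_k\in\partial\W_k$ realize $\dist(\pi(x_k),\partial\W_k)$, put $z_k=(y_k,(x_k)_{n+1})$, and rescale by $\eta_k(x)=\l_k^{-1}(x-z_k)$: set $\W_k^\ast=\l_k^{-1}(\W_k-y_k)$, $\phi_k^\ast(y)=\l_k^{-1}(\phi_k(y_k+\l_k y)-(x_k)_{n+1})$ and $u_k^\ast(y)=\l_k^{-1}(u_k(y_k+\l_k y)-(x_k)_{n+1})$.

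A direct chain-rule computation shows $u_k^\ast$ is a weak solution of \ref{PMC dirichlet problem with divergence term} on $\W_k^\ast$ with $\|H_k^\ast\|_0\le\l_k K$ and $[f_k^\ast]_\a\le\l_k^\a K$, both tending to $0$. The scaled $C^{1,\a}$ norm of Definition \ref{ar regular class} is itself scale invariant, so rescaling at the maximal radius $r/\l_k$ does not force $\k$-smallness; however, at each $C^{1,\a}$ boundary point the tangent-plane representing function vanishes to first order, which yields $\k(\partial\W_k,s,y)\le c(n)(s/r)^\a$ for all $s\le r$ (and similarly for $\graph\phi_k$). Choosing the intermediate radius $r_k=\l_k^{-1/2}$ then gives
\begin{equation*}
r_k\to\infty,\quad \k_{(\W_k^\ast,\Phi_k^\ast)}\le c(n)(\sqrt{\l_k}/r)^\a\to 0,
\end{equation*}
so $(\W_k^\ast,\graph\phi_k^\ast)\in\B^\a_{r_k}$ and every hypothesis of Lemma \ref{graph current convergence to a plane} is met.

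Set $\xi_k=\eta_k(x_k)\in\graph u_k^\ast\cap\ov B^{n+1}_1(0)$ and, along a subsequence, $\xi_k\to\xi_\infty\in\ov B^{n+1}_1(0)$. Lemma \ref{graph current convergence to a plane} provides $T_k^\ast=[\![\graph u_k^\ast]\!]\to T$ weakly and as Radon measures, with $\spt T$ either a vertical hyperplane or an $n$-dimensional halfspace through $0$. The area lower bound of Lemma \ref{small enclosed volume} combined with measure convergence forces $\xi_\infty\in\spt T$; the Hausdorff closeness statement \ref{Hausdorff close} then produces
\begin{equation*}
\sup_{y\in\graph u_k^\ast\cap B^{n+1}_1(\xi_k)}\dist(y-\xi_k,\,P_\infty-\xi_\infty)<\e
\end{equation*}
for all large $k$, where $P_\infty$ is the affine plane underlying $\spt T$, while measure convergence yields $\o_n^{-1}\mu_{T_k^\ast}(B^{n+1}_1(\xi_k))\to\mu_T(B^{n+1}_1(\xi_\infty))\le\o_n$. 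Unscaling contradicts the failure of \ref{Pisclose}--\ref{massclose} at $(x_k,\l_k)$. For the final assertion, when $x_k\in\Phi_k$ one has $y_k=\pi(x_k)$, hence $z_k=x_k$ and $\xi_k=0$; this forces case (ii) of Lemma \ref{graph current convergence to a plane} with $0\in\partial\spt T$ and $\mu_T(B_1(0))=\o_n/2$, while the $C^{1,\a'}$ convergence $\Phi_k^\ast\to\partial P_+$ from Remark \ref{AA remark} delivers the additional closeness estimate between $\Phi\cap B^{n+1}_\l(x)$ and $\partial P_+$.

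The main technical point is the correct choice of rescaling radius. Scale invariance of the $C^{1,\a}$ norm in Definition \ref{ar regular class} means that a naive parabolic rescaling by $\l_k$ keeps $\k_{(\W_k^\ast,\Phi_k^\ast)}$ merely bounded, insufficient for Lemma \ref{graph current convergence to a plane}. Taking the intermediate $r_k=\l_k^{-1/2}$---large enough that $r_k\to\infty$, yet corresponding to original scales $\l_k r_k=\sqrt{\l_k}\to 0$---is precisely what converts the genuine $C^{1,\a}$ regularity of $\partial\W$ and $\graph\phi$ into the required uniform flatness of the rescaled data, after which the remainder of the argument is a routine compactness deduction from Lemma \ref{graph current convergence to a plane}.
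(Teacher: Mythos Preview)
Your argument is correct and follows essentially the same blow-up/contradiction scheme as the paper: negate the conclusion, rescale by $\lambda_k\to 0$, verify the hypotheses of Lemma~\ref{graph current convergence to a plane}, and derive the contradiction from the Hausdorff closeness and Radon measure convergence of that lemma. The paper centers the rescaling at $x_i$ itself (rather than at your nearby boundary point $z_k$), which slightly streamlines things since then $0$ automatically lies on the rescaled graph; but your version with $\xi_k\to\xi_\infty\in\spt T$ works equally well.

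Your observation about the intermediate radius is a genuine refinement. The paper simply asserts $\k_{(\wt\W_i,\wt\Phi_i)}\le\l_i^\a\k_{(\W_i,\Phi_i)}$ at the full blown-up radius $r/(2\l_i)$, but as you note, the \emph{scaled} $C^{1,\alpha}$ norm in Definition~\ref{c1a convergence} is dilation-invariant, so this inequality is not literally obtained by scaling alone. What is really being used (both in the paper and in your argument) is that choosing $L_x$ to be the tangent plane forces $u_x(0)=Du_x(0)=0$, whence $\k(\partial\W,s,x)\le c(n)(s/r)^\a\k(\partial\W,r,x)$; therefore after rescaling one gets smallness of $\k$ provided one works at an intermediate radius $r_k$ with $r_k\to\infty$ but $\l_k r_k\to 0$. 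Your explicit choice $r_k=\l_k^{-1/2}$ makes this step transparent and is a cleaner justification than the paper's one-line claim, but the underlying mechanism and the rest of the proof are the same.
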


\begin{proof}
Assume that the theorem is not true. Then for some $\e>0$, there exist a sequence of boundary data $(\W_i,\Phi_i)\in \B^\a_{r}$ and corresponding Dirichlet problems (as in \ref{PMC dirichlet problem with divergence term}) with $H_i, f_i$ satisfying \ref{bdrydatacon} such that the following holds: there exists a sequence $\l_i\downarrow 0$ and $x_i\in (B^n_{r/2}(0)\times\R)\cap\graph u_i$, where $u_i\in C^{1,\a}(\W_i)$ are weak solutions of the corresponding problems, with 
\[\dist(x_i,\partial\W_i\times\R)<\l_i\]
 but such that at least one of the assertions \ref{Pisclose}, \ref{massclose} with $x=x_i$ and $\l=\l_i$ fails.

Let $\wt{\W}_i=\eta_{x_i,\l_i}(\W_i)$ and $\wt{\Phi}_i=\eta_{x_i,\l_i}(\Phi_i)$, where $\eta_{x,\l}(y)=\l^{-1}(y-x)$. Then (after a vertical translation so that $\wt{\W}_i\subset\R^n\times\{0\}$)
\[(\wt{\W}_i,\wt{\Phi}_i)\in \B^\a_{ r/(2\l_i)}\]
and  $\k_{(\wt{\W}_i,\wt{\Phi}_i)}\le\l_i^\a\k_{(\W_i,\Phi_i)}$. Also $\wt{\Phi}_i=\graph\wt{\phi}_i$, where $\wt{\phi}_i\in C^{1,\a}(\partial\wt\W_i)$ is defined by $\wt{\phi}_i(x')=\eta_{x_i,\l_i}(\phi_i(\l_ix'+x_i))$.
Furthermore for $\wt{T}_i=\eta_{x_i,\l_i\#}(T_i)$, where $T_i=[\![\graph u_i]\!]$, we have that $\wt{T}_i=[\![\graph\wt{u}_i]\!]$, where $\wt{u}_i\in C^{1,\a}(\wt{\W}_i)$ is defined by $\wt{u}_i(x')=\eta_{x_i,\l_i}(u_i(\l_ix'+x_i))$ and is therefore a solution to the Dirichlet problem
\begin{equation*}\label{approx tilde prob}
\begin{split}
\sum_{j=1}^nD_j\left(\frac{D_j\wt{u}_i}{\sqrt{1+|D\wt{u}_i|^2}}\right)&=\wt{H}_i +\sum_{j=1}^nD_j\wt{f}^j_i \text{  in  } \wt{\W}_i\\
\wt{u}_i&=\wt{\phi}_i \text{  on  }\partial\wt{\W}_i
\end{split}
\end{equation*}
with
\[\wt{H}_i(x)=\l_i H_i(x_i+\l_ix)\Rightarrow \|\wt{H}_i\|_{0, B^n_{r/(2\l_i)}\times\R}\le\l_i\|H_i\|_{0, B^n_r(0)\times\R}\stackrel{i\to\infty}{\longrightarrow}  0\]
\[\wt{f}_i(x)=f_i(x_i+\l_ix)\Rightarrow[\wt{f}_i]_{\a,B_{r/(2\l_i)}^{n}(0)\times\R}\le\l_i^\a[f_i]_{\a,B_{r}^{n}(0)\times\R}\stackrel{i\to\infty}{\longrightarrow} 0.\]
Hence we can apply Lemma \ref{graph current convergence to a plane} to the sequence $\wt{T}_i$ which implies that
\begin{equation*}\label{wtTi to T}
\wt{T}_i\res(B^n_{r/(2\l_i)}(0)\times\R)\to T
\end{equation*}
in the weak sense of currents, but also $\mu_{\wt{T}_i}\to\mu_T$ as Radon measures and where for the limit $T$ the following holds:
\begin{enumerate}
\item[(i)] If $\liminf{\l}_i^{-1}\dist({x}_i,\Phi_i)=\infty$ then $\partial T=0$ and $\spt T$ is a vertical hyperplane.
\item[(ii)] If $\liminf{\l}_i^{-1}\dist({x}_i,\Phi_i)<\infty$ then $\partial T\ne 0$, $\spt T$ is an $n$-dimensional halfspace and $\partial T=[\![\Phi]\!]$ with $\Phi$ being determined by $\wt\Phi_i$ as follows:\\ $\wt{\Phi}_i\cap (B_{r/(2\l_i)}^n(0)\times\R)\toap \Phi$, for all $\a'<\a$.
\end{enumerate}

By the measure convergence $\mu_{\wt{T}_i}\to\mu_T$, $\forall\, \e>0$ there exists $i_0$ such that $\forall\, i\ge i_0$:
\[\l_i^{-n}\mu_{T_i}(B^{n+1}_{\l_i}(x_i))=\mu_{\wt{T}_i}(B^{n+1}_1(0))\le|\spt T\cap B^{n+1}_1(0)|+\e\]

Furthermore because of \ref{Hausdorff close} of Lemma \ref{graph current convergence to a plane} for any $\e>0$, there exists $i_0$ such that for all $i\ge i_0$
\begin{equation*}\label{Ti e-close to T}
\frac{1}{\l_i}\sup_{y\in B^{n+1}_{\l_i}(x_i)\cap\spt T_i}\dist(y-x_i,\spt T)\le\sup_{y\in B^{n+1}_1(0)\cap\spt\wt{T}_i}\dist(y,\spt T)<\e
\end{equation*}
and if $\partial T\ne 0$, we also have that
\begin{equation*}\label{Ti e-close to T boundary}
\frac{1}{\l_i}\sup_{y\in B^{n+1}_{\l_i}(x_i)\cap\Phi_i}\dist(y-x_i,\spt\partial T)\le
\sup_{y\in B^{n+1}_{1}(0)\cap\wt{\Phi}_i}\dist(y,\spt\Phi)<\e
\end{equation*}
since $\wt{\Phi}_i\cap (B_{r/(2\l_i)}^n(0)\times\R)\toap \Phi$.

Hence taking $P$ to be the $n$-dimensional linear subspace that contains the support of $T$ we get a contradiction.

In the special case when $x_i\in\Phi_i$ we argue in the same way. In this situation, for the limit $T$ we are in case (ii) $\partial T=[\![\Phi]\!]\ne0$ and furthermore $0\in\Phi$. Hence we get a contradiction by taking $P_+=\spt T$.
\end{proof}

\section{Approximating the MCE}\label{dirichlet problem section}
Throughout this section we let $\W$ be a $C^{1,\a}$ bounded domain in $\R^n$ and $\Phi$ a compact, embedded $C^{1,\a}$ submanifold of $\partial\W\times\R$, such that for a sequence $\phi_i\in C^{1,\a}(\partial{\W})$, $\graph \phi_i\toa\Phi$, where the convergence is as in Definition \ref{c1a convergence}. By translating $\W$ we can assume that $0\in\partial\W$ and hence for some $r>0$, $(\W,\Phi)\in{\B}^\a_r$, with ${\B}^\a_r$ as in Definition \ref{ar regular class}.
We let also $H=H(x',x_{n+1})$ be a $C^1$ function in $\ov{\W}\times\R$, which is non decreasing in the $x_{n+1}$-variable and such that $\|H\|_0\le n \o_n^{1/n}|\W|^{-1/n}$.

In this section we will show that the Dirichlet problem of prescribed mean curvature equal to $H$ (cf. \ref{PMCE}) and with boundary data $(\W,\Phi)$, can be approximated by a sequence of new Dirichlet problems for the prescribed mean curvature equation which have the form of the one defined in \ref{PMC dirichlet problem with divergence term} of Section \ref{section the DP with RD}. We will construct the new equations in such a way that
\begin{enumerate}
\item[(a)] We have uniform $C^{1,\a}$ bounds for the graphs of the solutions of the approximating problems %(cf. \ref{regularity subsection})
\item[(b)] We can construct barriers for the solutions and prove gradient bounds and hence existence of the solutions %(cf. \ref{Grad est section}, \ref{existence of solutions section})
\end{enumerate}

\subsection*{Constructing the approximating sequence}\label{constructing approximation}

For $(\W,\Phi)\in{\B}^\a_r$, let $\{\W_k\}$ be a sequence of bounded, $C^\infty$ domains with $\W_k\subset\W$ for all $k$, $\phi_k\in C^\infty(\partial{\W}_k)$ and $\Phi_k=\graph\phi_k$ be such that $(\W_k,\Phi_k)\in{\B}^\a_{r}$ and 
\[ \W_k\toa \W\,\,,\,\,\Phi_k\toa \Phi\]
with the convergence being as in Definition \ref{c1a convergence}.

For each $k$ we consider the following Dirichlet problem
\begin{equation}\label{approx prob}
\begin{split}
\sum_{i=1}^nD_i\left(\frac{D_iu_k}{\sqrt{1+|Du_k|^2}}\right)&=\sum_{i=1}^nD_if^i_k+H_k \text{  in  } \W_k\\
u_k&=\phi_k \text{  on  }\partial\W_k
\end{split}
\end{equation}
where the equation above is to be interpreted weakly (as is \ref{MCE*}) and $H_k:\ov{\W}_k\times\R\to\R$, $f_k=(f_k^1,\dots, f_k^n):\ov{\W}_k\times\R\to\R$ satisfy the following properties for a sequence $\d_k\downarrow 0$:
\begin{enumerate}
\item[(i)] $H_k=H_k(x',x_{n+1})$ is a $C^1$ function in $\ov{\W}_k\times\R$, which is non decreasing in the $x_{n+1}$-variable and such that $\|H_k\|_0\le\|H\|_0$ and
\[H_k(x',x_{n+1})=\begin{cases}H(x',x_{n+1})&\text{for } x'\in \W_k: \dist(x',\partial\W_k)>{2\d_k}\\ 
0&\text{for } x'\in \W_k: \dist(x',\partial\W_k)<\d_k.\end{cases}\]
\item[(ii)]  There exists a neighborhood $V_k$ of $\partial\W_k$ in $\W_k$ such that 
\[\{x'\in\W_k:\dist(x',\partial\W_k)<\d_k\}\subset V_k\,\,,\,\, B^n_{r/4}(x')\subset V_k\,\, \forall x'\in\partial\W_k\cap B^n_r(0)\] and such that $f_k$ is $C^{0,\a}$ when restricted in $V_k\times\R$ and in particular it satisfies the estimate
\[\|f_k\|_{0, B_{r/4}^{n}(x')\times\R} + r^\a[f_k]_{\a, B_{r/4}^{n}(x')\times\R}
 \le C\left(\|\eta_k\|_{0,(\partial\W_k\cap B_r^{n}(x'))\times\R}+ r^\a[\eta_k]_{\a, (\partial\W_k\cap B_r^{n}(x'))\times\R}\right)\]
for all $x'\in\partial\W_k\cap B^n_r(0)$. Here $\eta_k$ is the inward pointing unit normal to the cylinder $\W_k\times\R$ and $C$ is a constant that depends only on $n$, in particular it is independent of $k$ (recall that $r$ is such that $(\W,\Phi)\in{\B}^\a_{r}$). Also
\[\sum_{i=1}^nD_if_k(x',x_{n+1})=\begin{cases}\dvg \eta_k(x',x_{n+1}) &\text{  for  } x'\in\partial\W_k, x_{n+1}>\phi_k(x')\\
-\dvg\eta_k(x',x_{n+1})&\text{  for  }x'\in\partial\W_k, x_{n+1}<\phi_k(x')\\
0&\text{  for  }x'\in\W_k:\dist(x',\partial\W_k)>\d_k \,\,\,(\text{weakly}).\end{cases}\]
\end{enumerate}

We now show how to construct $H_k, f_k$ satisfying the above properties.

For any $\d>0$ we define $\W_k^\d$ to be the $\d$-neighborhood of $\partial\W_k$ in $\W_k$, i.e.
\[\W_k^\d=\{x'\in\W_k:\dist(x',\partial\W_k)<\d\}.\]

Let $\{\d_k\}$ be a sequence such that 
\begin{equation}\label{dk}
\d_k\to 0\,\, \text{ and }\,\,\d_k^{1/2}\|H_{\partial\W_k}\|_0\to 0
\end{equation}
 where $H_{\partial\W_k}$ denotes the mean curvature of $\partial\W_k$ with respect to the inward pointing unit normal.
We also take $\d_k$ small enough so that the nearest point projection, which we will denote by $\proj_{\partial\W_k}(x')$, is well defined for all $x'\in \W^{2\d_k}_k$. Notice that we can do this since $\partial\W_k$ is $C^\infty$ and using \ref{dk} we have that
\begin{equation}\label{projprop}
|\proj_{\partial\W_k\times\R}(x)-\proj_{\partial\W\times\R}(y)|\le C|x-y|\,\,\forall\,x,y\in\W_k^{2\d_k}\times\R
\end{equation}
where $C$ is a constant that is independent of $k$. This enables us to extend $\eta_k$ in $\W_k^{2\d_k}\times\R$ by letting $\eta_k(x',x_{n+1})=\eta_k(\proj_{\partial\W_k}(x'),x_{n+1})=\eta_k(\proj_{\partial\W_k}(x'),0)$ and for this extension, using \ref{projprop} we have that
\begin{equation}\label{etaext}
[\eta_k]_{\a, (\W^{\d_k}_k\cap B_r^{n}(x'))\times\R}\le C[\eta_k]_{\a, (\partial\W_k\cap B_r^{n}(x'))}\,\,\forall\,x'\in\partial\W_k
\end{equation}
where $C$ is a constant independent of $k$. 
Similarly we can extend ${\phi}_k$ in $\W_k^{2\d_k}$ by ${\phi}_k(x')=\phi_k(\proj_{\partial\W_k}(x'))$. Furthermore we pick the sequence $\{\d_k\}$ so that
\begin{equation}\label{dk2}
\d_k^{\a/2}\|D\phi_k\|_0\to 0.
\end{equation}
We remark that this is a technical assumption that will be used later for proving global gradient estimates for a solution of \ref{approx prob} (cf. Lemma \ref{global gradient bounds lemma}).

With $\d_k$ as above, we let
\[H_k(x',x_{n+1})=\begin{cases} H(x',x_{n+1}) &\text{  in  } (\W_k\setminus\W_k^{2\d_k})\times\R\\
0 &\text{  in  } \W_k^{\d_k}\times\R\end{cases}\]
and extend it in the rest of the domain $\W_k\times \R$ so that it is $C^1$, non-decreasing in the $x_{n+1}$-variable and so that $\|H_k\|_{0,\overline{\W}_k\times\R}\le\|H\|_{0,\overline{\W}\times\R}$. Hence we have constructed $H_k$, satisfying the properties described in (i) above.

To construct $f_k$, we define $U^+, U^-\subset \W^{\d_k/2}_k\times\R$ by
\begin{equation}\label{U}
\begin{split}U^+&=\{(x',x_{n+1}):x'\in\W_k^{\d_k/2}, x_{n+1}\ge\phi(x')+\dist(x',\partial\W_k\times\R)\}\\
U^-&=\{(x',x_{n+1}):x'\in\W_k^{\d_k/2}, x_{n+1}<\phi(x')-\dist(x',\partial\W_k\times\R)\}.\end{split}
\end{equation}

By Lemma \ref{match divergence}, Remark \ref{rmk match div},  there exists a smooth vector field\\ $X=(X^1,\dots,X^n, X^{n+1})$ in $U^-$, independent of the $x_{n+1}$-variable, such that 
\[\dvg X=\sum_{i=1}^n D_i X^i=0\,\,,\,\,X (x',x_{n+1})=2\eta_k(x',x_{n+1}) \text{ for  }x'\in\partial\W_k\]
 and 
 \[\begin{split}\|X\|_{0, (B_r^n(x')\times\R)\cap U^-}&+r^\a [X]_{\a, (B_r^n(x')\times\R)\cap U^-}\\&\le C\left(\|\eta_k\|_{0, (B_r^n(x')\cap \partial\W_k)\times\R}+ r^\a[\eta_k]_{\a, (B_r^n(x')\cap \partial\W_k)\times\R}\right)\end{split}\]
for any $x'\in \partial\W_k$ and where $C$ is independent of $k$ (for sufficiently large $k$).

Using again Lemma \ref{match divergence}, Remark \ref{rmk match div}, there exists a neighborhood $V_k$ of $\partial\W_k$ in $\W_k$ such that
\[\W_k^{\d_k}\subset V_k\,\,,\,\, B^n_{r/4}(x)\subset V_k\,\, \forall x\in\partial\W_k\cap B^n_r(0)\]
and a smooth vector field $Y=(Y^1,\dots,Y^n, Y^{n+1})$ in $(V_k\setminus\W^{\d_k}_k)\times\R$, independent of the $x_{n+1}$ variable, such that 
\[\dvg Y=\sum_{i=1}^n D_i Y^i=0\,\,,\,\,Y (x',x_{n+1})=\eta_k(x',x_{n+1}) \text{ for  }x'\in\partial V_k\setminus\partial\W_k\]
 and 
 \[\begin{split}\|Y\|_{0, (B_r^n(x')\cap (V_k\setminus\W^{\d_k}_k))\times\R}&+r^\a [Y]_{\a, (B_r^n(x')\cap (V_k\setminus\W^{\d_k}_k))\times\R}\\&\le C\left(\|\eta_k\|_{0, (B_r^n(x')\cap \partial\W_k)\times\R}+ r^\a[\eta_k]_{\a, (B_r^n(x')\cap \partial\W_k)\times\R}\right)\end{split}\]
for any $x'\in \partial\W_k$ and where $C$ is independent of $k$ (for sufficiently large $k$).

We then define $f_k:U=U^+\cup U^-\cup((V_k\setminus\W^{\d_k}_k)\times\R)\to\R^n$ as follows: For each $i\in\{1,\dots,n\}$ we let
\begin{equation}\label{f}f_k^i(x)=\begin{cases}\eta_k^i(x)& \text{ if }x\in U^+\\
X^i-\eta_k^i(x)& \text{ if }x\in U^-\\
Y^i(x)& \text{ if }x\in (V_k\setminus\W^{\d_k}_k)\times\R\\
0& \text{ if }x\in (\W_k\setminus V_k)\times\R.\end{cases}\end{equation}
One can easily check now, using the estimates for the norms of $X$ and $Y$, the definition of $U^\pm$ as well as \ref{projprop}, \ref{etaext}, that $f_k=(f_k^1,\dots, f_k^n)$ is a $C^{0,\a}$ vector field in the domain $U=U^+\cup U^-\cup((V_k\setminus\W^{\d_k}_k)\times\R)$ and
 \begin{equation}\label{oaest}\begin{split}\|f_k\|_{0, (B_{r/4}^n(x')\times\R)\cap U}&+r^\a [f_k]_{\a, (B_{r/4}^n(x')\times\R)\cap U}\\&\le C\left(\|\eta_k\|_{0, (B_r^n(x')\cap \partial\W_k)\times\R}+ r^\a[\eta_k]_{\a, (B_r^n(x')\cap \partial\W_k)\times\R}\right)\end{split}\end{equation}
 for any $x'\in \partial\W_k\cap B^n_r(0)$ and where $C$ is independent of $k$. Hence we can extend $f_k$ in $\W_k\times\R$ so that the estimate \ref{oaest} still holds (with $U$ replaced by $\W_k\times\R$).

\begin{rmk}\label{lpoff}
By the construction of $f_k$ and using \emph{Lemma \ref{match divergence}}, \emph{Remark \ref{rmk match div}} we note that $\sum_{i=1}^nD_if^i_k$ is well defined and smooth in $(\ov{\W}_k\times\R)\setminus\graph\phi_k|_{\partial\W_k}$ and in this domain 
\[\left\|\sum_{i=1}^n D_if^i_k\right\|_0\le c\|\dvg\eta_k\|_0\Rightarrow\d_k^{1/2}\left\|\sum_{i=1}^n D_if^i_k\right\|_0\stackrel{k\to\infty}{\longrightarrow}0\] 
since $\d_k^{1/2}\|H_{\partial\W_k}\|_0\to 0$ \emph{(cf. \ref{dk})}.

Furthermore, since $f_k$, as defined in \emph{\ref{f}}, is independent of the $x_{n+1}$-variable in each of the domains $U^+$, $U^-$ and $(V_k\setminus\W_k^{\d_k})$, we can extend it in $\W_k\times\R$ so that it is still independent of the $x_{n+1}$ variable in each of the domains 
\[\{(x',x_{n+1}):x'\in\W_k^{\d_k}, x_{n+1}\ge\phi(x')+\dist(x',\partial\W_k\times\R)\}\]
and
\[\{(x',x_{n+1}):x'\in\W_k^{\d_k}, x_{n+1}<\phi(x')-\dist(x',\partial\W_k\times\R)\}.\]
In these domains we also have that $\sum_{i=1}^n D_if_k^i(x)$ is equal to $\dvg\eta_{\partial\W_k}(x)$ and $-\dvg\eta_{\partial\W_k}(x)$ respectively. This extra property of $f_k$ will be used later for proving global gradient estimates for a solution of \emph{\ref{approx prob}} \emph{(cf. Lemma \ref{global gradient bounds lemma})}.

\end{rmk}

\begin{rmk}\label{supbound}
The solutions $u_k$ of the approximating problems satisfy a uniform sup estimate, i.e. if $u_k\in C^{1,\a}(\W_k)$ are solutions of the problems \emph{\ref{approx prob}}, then
\[\|u_k\|_0\le M\]
for some constant $M$ independent of $k$.

To see this note that by the assumption on $\|H\|_0$ and by \emph{Remark \ref{lpoff}}, for $\d_k$ small enough
\[\left|\int_\W (H_k+\sum_{i=1}^n D_if^i_k)\zeta d\H^n\right|\le (1-\e_0)\int_\W|D\zeta|d\H^n\]
for all $\zeta\in C^1_0(\W)$ and where $\e_0<1$ is a constant independent of $k$. Hence we get a uniform sup estimate \cite[pg. 408]{GT}. 
 \end{rmk}

\subsection*{$C^{1,\a}$ regularity of the approximating graphs}\label{regularity subsection}

In the following theorem, which is essentially an application of Theorem \ref{close-to-plane thm} and Allard's regularity theorem \cite{Afirstvarofvarifold}, we prove that the graphs of the solutions are close to planes in uniform sized balls.

\begin{theorem}\label{graph uniformly close to a plane}
For each $k$ let $u_k\in C^{1,\a}(\ov{\W}_k)$ be a (weak) solution of \emph{\ref{approx prob}}. 
For any $\e>0$, there exists $\l_0=\l_0(\e)$ such that the following holds:

For any $k$, $x_k\in\graph u_k\cap (B^n_{r/8}(0)\times\R)$ and $\l\le\l_0$ 
\begin{equation*}\label{P-close}
\l ^{-1}\sup_{\graph u_k\cap B^{n+1}_{\l }(x_k)}\dist(y-x_k,P)<\e.\tag{(1)}
\end{equation*}
for some $n$-dimensional linear subspace $P=P(x_k,\e,\l)$.

In particular if $x_k\in\graph \phi_k\cap (B^n_{r/8}(0)\times\R)$ then \emph{\ref{P-close}} holds with an $n$-dimensional halfspace $P_+=P_+(x_k,\e,\l)$ in place of $P$, such that $0\in\partial P_+$ and
\begin{equation*}\label{P-close bdry}
\l^{-1}\sup_{\Phi_k\cap B^{n+1}_{\l }(x_k)}\dist(y-x_k,\partial P_+)<\e.
\tag{(2)}
\end{equation*}
\end{theorem}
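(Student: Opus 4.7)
The plan is to split the argument by the distance from $x_k$ to the boundary cylinder $\partial\W_k\times\R$, and in each regime to argue by compactness and classification of blow-up limits. A preliminary observation is that the construction of the approximating equations (cf.\ Remark \ref{lpoff} and the estimate \ref{oaest}), combined with $\W_k\toa\W$, yields uniform bounds $\|H_k\|_{0,B^n_r(0)\times\R}\le K$ and $[f_k]_{\a,B^n_r(0)\times\R}\le K$ with $K$ independent of $k$. This is precisely what will make the closeness to planes uniform in $k$.

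In the near-boundary regime $\dist(x_k,\partial\W_k\times\R)<\l$, I would invoke Theorem \ref{close-to-plane thm} directly. Since $(\W_k,\Phi_k)\in\B^\a_r$ and $\|H_k\|_0,[f_k]_\a\le K$ uniformly, that theorem produces a single radius $\r_1=\r(r,\e,K)$ such that \ref{P-close}, and for $x_k\in\Phi_k$ also \ref{P-close bdry}, hold for every $\l\le\r_1$ and every $k$. This reduces the task to the interior regime $\dist(x_k,\partial\W_k\times\R)\ge\l$.

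For the interior regime I would argue by contradiction, mirroring the blow-up scheme in the proof of Theorem \ref{close-to-plane thm}. Suppose the statement fails: pick $\e>0$ and extract a subsequence with indices $k_j$, points $x_{k_j}\in\graph u_{k_j}\cap(B^n_{r/8}(0)\times\R)$ satisfying $\dist(x_{k_j},\partial\W_{k_j}\times\R)\ge\l_j$, and scales $\l_j\downarrow 0$ for which $\graph u_{k_j}$ is not $\e$-close to any plane in $B^{n+1}_{\l_j}(x_{k_j})$. Rescale via $\eta_{x_{k_j},\l_j}$ to obtain $\wt T_{k_j}=\eta_{x_{k_j},\l_j\#}T_{k_j}$; the rescaled data satisfy $\|\wt H_{k_j}\|_0\le\l_j K\to 0$ and $[\wt f_{k_j}]_\a\le\l_j^\a K\to 0$. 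If $\dist(x_{k_j},\partial\W_{k_j}\times\R)/\l_j$ stays bounded, Lemma \ref{graph current convergence to a plane} applies and the Hausdorff-closeness conclusion \ref{Hausdorff close} to a limiting vertical hyperplane or halfspace contradicts the assumed failure of $\e$-closeness. If instead $\dist(x_{k_j},\partial\W_{k_j}\times\R)/\l_j\to\infty$, the rescaled boundary escapes every fixed ball, and the limit $T$ is an integral $n$-current in $\R^{n+1}$ with no boundary, area minimizing by the same comparison as in Lemma \ref{graph current convergence to a plane}; the interior monotonicity formula combined with the density upper bound from Lemma \ref{connectivity lemma} then forces $T$ to be a single hyperplane of multiplicity one (this is the step that invokes Allard's interior theory), again contradicting the failure.

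The step I expect to be most delicate is this second interior subcase, because Lemma \ref{graph current convergence to a plane} is stated under the hypothesis $\partial\W_k\cap B^n_r(0)\ne\emptyset$, which fails once the rescaled boundary recedes to infinity. The fix is to rerun the almost-minimizing comparison from that lemma in the purely interior setting --- no Lipschitz retraction to $\ov\W_k\times\R$ is needed, since comparison currents can be chosen freely in $\R^{n+1}$ --- and then apply the standard classification of area-minimizing tangent cones with interior density equal to one to identify the limit as a hyperplane, closing the contradiction exactly as in the bounded case.
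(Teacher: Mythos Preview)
Your near-boundary regime (\,$d:=\dist(x_k,\partial\W_k\times\R)<\l$\,) is handled correctly by a direct appeal to Theorem \ref{close-to-plane thm}, and your interior subcase with $d/\l$ bounded is also fine --- it is essentially a re-run of the blow-up already packaged into Lemma \ref{graph current convergence to a plane}. The gap is in the remaining subcase $d_{k_j}/\l_j\to\infty$. There the rescaled boundary escapes, the limit $T$ is an area-minimizing integral $n$-cycle in all of $\R^{n+1}$ with $0\in\spt T$, and you want to conclude that $T$ is a single hyperplane. But Lemma \ref{connectivity lemma} only bounds mass in \emph{slabs}, $\mu_T(q(Q_{\r,\s}))\le(1+cn\s)\o_n\r^n$, not in balls; to convert this into $\Theta(T,0)\le 1$ you would already need to know that $\spt T\cap B_\r(0)$ sits in some thin slab, which is exactly what is in question. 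The halfspace constraint is what makes the classification in Lemma \ref{graph current convergence to a plane} go through, and it is absent here. In dimensions $n\ge 8$ there exist entire non-planar area-minimizing graphs, so some genuine extra input is required; invoking ``Allard's interior theory'' on the limit does not supply it, since Allard's hypotheses (density close to $1$ at some scale) are precisely what you are trying to establish.

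The paper closes this gap by a different mechanism. After reducing to $d_j\to 0$ via the uniform interior $C^{1,\a}$ estimates (Remark \ref{change of basis rmk}), it applies Theorem \ref{close-to-plane thm} not at scale $\l_j$ but at the larger scale $d_j+\l_j$, which is always admissible since $d_j<d_j+\l_j$. That theorem delivers, at scale $d_j+\l_j$, both $\e$-closeness to a plane \emph{and} the sharp mass bound $\o_n^{-1}(d_j+\l_j)^{-n}\mu_{T_j}(B_{d_j+\l_j}(x_j))\le 1+\e$. When $\liminf \l_j/d_j>0$ these transfer directly to scale $\l_j$. When $\l_j/d_j\to 0$ (your problematic subcase), one observes that the $L^p$-scaled generalized mean curvature in $B_{d_j}(x_j)$ tends to zero --- either $d_j>2\d_j$, in which case $\sum_i D_if_j^i\equiv 0$ there, or $d_j\le 2\d_j$ and Remark \ref{lpoff} gives $\d_j^{1/2}\|\sum_i D_if_j^i\|_0\to 0$. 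Together with the closeness and mass bound inherited from scale $d_j+\l_j$, this verifies the hypotheses of Allard's interior regularity theorem at scale $d_j$; Allard then produces a $C^{1,\a}$ graph representation in $B_{\theta d_j}(x_j)$ with small norm, which forces $\e$-closeness at every scale $\l_j<\theta d_j$. The essential point is that Allard is applied to the \emph{sequence} at the intermediate scale $d_j$, with its hypotheses manufactured by the near-boundary theorem one scale up --- not to an unconstrained blow-up limit that you would then have to classify from scratch.
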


\begin{proof}

We assume that the conclusion is not true. Then for some $\e_0>0$ and for any $\l_0>0$ there exist  $k_j$, $x_j\in\graph u_{k_j}\cap(B^n_{r/8}(0)\times\R)$ and $\l_j<\l_0$ such that the conclusion of the lemma for $k=k_j$, $x_k=x_j$ and $\l=\l_j$ fails. Hence there exist sequences $\{k_j\}$, $\{x_j\}$ such that $x_j\in\graph u_{k_j}$ and a sequence $\{\l_j\}\downarrow 0$ such that the conclusion \ref{P-close} of the lemma with this $\e_0$ and with $k=k_j$, $x_k=x_j$, $\l=\l_j$ fails for all $j$.

Since for all $k$, $u_k\in C^{1,\a}(\ov{\W}_k)$, we can assume that $k_j\to\infty$. Hence without loss of generality we can take $k_j=j$.

Let $d_j=\dist(x_j,\partial\W_j\times\R)$. Standard PDE theory implies uniform interior $C^{1,\a}$ estimates for the solutions of the of the problems \ref{approx prob}, therefore we can assume that $d_j\to 0$ (cf. Remark \ref{change of basis rmk}).

In the special case when $x_j\in\Phi_j$, we can apply Theorem \ref{close-to-plane thm} with $x=x_j$, $\l=\l_j$. Hence for any $\e>0$ there exists $j_0$ such that for $j\ge j_0$
\[\l_j^{-1}\sup_{\graph u_j\cap B^{n+1}_{{\l}_j}({x}_j)}\dist(y-{x}_j,P_+)
\le \e\]
for some $n$-dimensional linear halfspace $P_+$ with $0\in\partial P_+$, such that
\[\l_j^{-1}\sup_{\Phi_j\cap B^{n+1}_{{\l}_j}({x}_j)}\dist(y-{x}_j,\partial P_+)
\le \e\]
and so by taking $\e=\e_0$ we get a contradiction, which proves the special case \ref{P-close bdry} of the theorem. 

We assume now that $x_j\notin\Phi_j$.
Applying Theorem \ref{close-to-plane thm} with $x=x_j$ and $\l=d_j+\l_j$ we get that
for any $\e>0$ there exists $j_0$ such that for $j\ge j_0$
\begin{equation*}\label{thm1}
(d_j+\l_j)^{-1}\sup_{\graph u_j\cap B^{n+1}_{d_j+{\l}_j}({x}_j)}\dist(y-{x}_j,P)
\le \e
\tag{(3)}\]
for some $n$-dimensional linear subspace $P$ and
\begin{equation*}\label{thm2}
\o_n^{-1}(d_j+\l_j)^{-n}|\graph u_j\cap B^{n+1}_{d_j+{\l}_j}({x}_j)|
\le 1+\e.
\tag{(4)}\]

We will consider two different cases, namely $\liminf d_j^{-1}\l_j>0$ or $d_j=0$ and $\liminf d_j^{-1}\l_j=0$ and show that in both cases we are led to a contradiction.

\textit{{Case 1:}} $\liminf d_j^{-1}\l_j>0$ or $d_j=0$. 

In this case \ref{thm1} impies that 
\begin{equation*}\label{ineq1}
\l_j^{-1}\sup_{\graph u_j\cap B^{n+1}_{{\l}_j}({x}_j)}\dist(y-{x}_j,P)\le\frac{\l_j+d_j}{\l_j}\e\le c\e.
\end{equation*}
Hence by taking $\e$ small enough, so that $c\e<\e_0$, where $c$ is as in the above inequality, we get a contradiction.

\textit{{Case 2:}} $\liminf d_j^{-1}\l_j=0$.

In this case for any $p\ge 2n$
\begin{equation*}\label{bdedmc}
d_j^{1-n/p}\left(\int_{B^{n+1}_{d_j}(x_j)}\left|{H}_j+\sum^{n}_{i=1} D_i f^i_j\right|^pd\mu_{T_j}\right)^\frac 1p\stackrel{j\to\infty}{\longrightarrow} 0
\tag{(5)}
\end{equation*}
since either $d_j>2\d_j$, which implies that $B^{n+1}_{d_j/2}(x_j)\subset \{x\in\W_j\times\R:\dist(x,\partial\W_j\times\R)>\d_j\}$ where $\sum_{i=1}^n D_if^i_j=0$, or  $d_j\le2\d_j$ in which case \ref{bdedmc} is true because of Remark \ref{lpoff}.

Furthermore \ref{thm2} implies that
\[\o_n^{-1}d_j^{-n}|\graph u_j\cap B^{n+1}_{{d}_j}({x}_j)|\le\left(1+\frac{\l_j}{d_j}\right)^n(1+\e).\]

Hence for any $\e'>0$ there exists $j_0$ such that for all $j\ge j_0$, $\graph u_j\cap B^{n+1}_{ d_j}(x_j)$ satisfies the hypothesis of Allard's interior regularity theorem and thus there exists $\theta\in(0,1)$ such that $\graph u_j\cap B^{n+1}_{\theta d_j}(x_j)$ is the graph of a $C^{1,\a}$ function $v_j$ above an $n$-dimensional linear space $P$, with  the $C^{1,\a}$ norm of $v_j$ is less than $\e'$. Hence for all $j\le j_0$ such that $\l_j<\theta d_j$
\[\l_j^{-1}\sup_{\graph u_j\cap B^{n+1}_{\l_j}(x_j)}\dist(y-x_j, P)\le \e'\]
which for $\e'=\e_0$ gives a contradiction.

\end{proof}

We will show next that the graphs of the solutions $u_k$ are not only $\e$-close to planes, as we proved in Theorem \ref{graph uniformly close to a plane}, but in fact they are $\e$-close in the $C^{1,\a}$ sense, i.e. we will prove that around each point there exists a uniform sized ball in which $\graph u_k$ is a $C^{1,\a}$ manifold with uniformly bounded $C^{1,\a}$ norm.

\begin{rmk}\label{change of basis rmk}
For all $k$, $u_k\in C^{1,\a}(\ov{\W}_k)$ and so $\graph u_k$ is a $C^{1,\a}$ manifold-with-boundary equal to $\Phi_k=\graph\phi_k$. Therefore given $\e_0\in(0,1/2)$, for any $k$ and $x\in\graph u_k$ there exists some $\r=\r(k, x,\e_0)$ such that 
\begin{equation*}\label{nua}
\r^\a\frac{|\nu_k(y)-\nu_k(z)|}{|y-z|^\a}\le\e_0/4\,\,\forall\,y,z\in B^{n+1}_\r(x)\cap\graph u_k
\tag{(1)}
\end{equation*}
where for any point $x=(x', u_k(x'))\in\graph u_k$, $\nu_k(x)$ is the downward pointing unit normal of $\graph u_k$ at $x$.
Note that provided $\dist(x,\partial\W\times\R)\ge d>0$, the radius $\r$ satisfying \emph{\ref{nua}} is independent of $ k$ and $x$, i.e. there exists $\r_0=\r_0(d,\e_0)<d$ such that the inequality in \emph{\ref{nua}} holds with any $k$ and $x\in\graph u_k$ such that $B_\r^{n+1}(x)\subset (\W_k\setminus\W^d_k)\times\R$, recall that $\W_k^d=\{x\in\W_k:\dist(x,\partial\W_k)<d\}$. That is because standard PDE estimates \cite[Chapter 13]{SimonKorevaar, GT}
 imply that for any $d>0$ we have that
\begin{equation*}\label{PDE remark}
\sup _{\{k:\d_k<d/2\}}\|u_k\|_{1,\a,\W_k\setminus\W_k^d}<C(d)
\end{equation*}
where $C(d)$ is a constant independent of $k$.

For any $k$, $x\in\graph u_k$ and $\r=\r(k,x,\e_0)$ satisfying \emph{\ref{nua}}, we have that 
\begin{equation*}\label{identi}
\graph u_k\cap B^{n+1}_\r(x)=\graph v\cap B^{n+1}_\r(x)\tag{(2)}
\end{equation*}
with $v\in C^{1,\a}((x+(L_x\cap U))\cap B^{n+1}_\r(x);L_x^\perp)$ and $L_x= T_x\graph u_k$, the tangent space of $\graph u_k$ at $x$. Since $v(0)=0$, $|Dv(0)|=0$ and for all $x=(x', v(x'))\in\graph u_k$ we have that
\[ \nu_k(x)=\left(\frac{D_1v(x')}{\sqrt{1+|Dv(x')|^2}},\dots, \frac{D_nv(x')}{\sqrt{1+|Dv(x')|^2}}, -\frac{1}{\sqrt{1+|Dv(x')|^2}}\right)\]
 it is easy to check that \emph{\ref{nua}} implies that
 \[\|v\|_{1,\a}\le\e_0.\]
  Also $U$ is a $C^{1,a}$ domain of $L_x$, since either
 \begin{itemize}
 \item[i.] $\Phi_k\cap B^{n+1}_\r(x)=\emptyset$, in which case $U=L_x$ or
 \item[ii.]  $x+(\partial U\cap B^{n+1}_\r(0))=\proj_{T_x\graph u_k}(\Phi_k\cap B^{n+1}_\r(x))$.
\end{itemize} 

Furthermore the function $v$ satisfies the equation
\[\sum_{i=1}^nD_i\left(\frac{D_iv}{\sqrt{1+|Dv|^2}}\right)=\dvg {f}_k+{H}_k\text{  in  }{U}\cap B^n_\r(0)\]
 where for $x'\in \W_k$, $y'\in x+(L_x\cap U)$ we have identified $(x', u_k(x'))$ with $(y', v(y'))$ using \emph{\ref{identi}}.
 
 Given $\e_0$, $k$ and $x\in\graph u_k$, let $\r$ be such that \emph{\ref{nua}} holds and assume furthermore that for some $\e<\e_0$, $B_{{\r}}^{n+1}(x)\cap\graph u_k$ is $\e$-close to some $n$-dimensional linear space $P$, i.e. 
\[{\r}^{-1}\sup_{y\in B^{n+1}_{{\r}}(x)\cap\graph u_k}\dist(y-x, P)<\e.\]
We then have that
\[\r^{-1}\dist(P\cap B_\r(0), T_x\graph u_k\cap B_\r(0))\le\e+\e_0\]
and it is then easy to check \emph{(}by writing $P$ as the graph of a linear function above $T_x\graph u_k$\emph{)} that this last inequality implies that
\begin{equation*}\label{enn}
\|N-\nu_k(x)\|<5\e_0/2
\end{equation*}
where $N$ is the normal to $P$. Using this and \emph{\ref{nua}} we have that
\begin{equation*}\label{NN}
\|\nu_k(y)- N\|\le\|\nu_k(y)- \nu_k(x)\|+\|\nu_k(x)-N\|<3\e_0\,\,\forall\,y\in \graph u_k\cap B^{n+1}_{\r}(x).
\tag{(3)}
\end{equation*}
This implies that  \emph{\ref{identi}} holds with $v\in C^{1,\a}((x+(P\cap \wt U))\cap B^{n+1}_\r(x);P^\perp)$, such that $\|v\|_{1,\a}\le 6\e_0$ and where $\wt U$ is a $C^{1,a}$ domain of $P$.

 \end{rmk}

\begin{theorem}\label{theta is bounded below}
Let $u_k\in C^{1,\a}(\ov{\W}_k)$ be a solution of \emph{\ref{approx prob}}. For $0<\e_0<1/4$ there exists a constant $\r_0=\r_0(\e_0)$, independent of $k$, such that 
\[\r_x=\sup_r\{\k(\graph u_k,x,r)<\e_0\}<\r_0\]
for all $x\in\graph u_k\cap B^n_{r/16}(0)$, where $\k$ is as in \emph{Definition \ref{c1a convergence}}.
\end{theorem}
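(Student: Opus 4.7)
I would argue by contradiction combined with a blow-up and Allard's regularity theorem. Suppose the claim fails; then there exist sequences $k_j\to\infty$ and $x_j\in\graph u_{k_j}\cap (B^n_{r/16}(0)\times\R)$ with $\r_{x_j}\to 0$. Fix a constant $C_0>1$, to be chosen momentarily from Allard's decay factor $\theta$, and set $\l_j:=C_0\r_{x_j}\to 0$. Since $\l_j>\r_{x_j}$ and $\r_{x_j}$ is the supremum of radii $r$ at which $\k(\graph u_{k_j},x_j,r)<\e_0$, we have $\k(\graph u_{k_j},x_j,\l_j)\geq\e_0$. Consider the blow-ups $\wt T_j:=\eta_{x_j,\l_j\#}T_{k_j}$; these are graphs of the rescaled functions $\wt u_j(y)=\l_j^{-1}(u_{k_j}(x_j'+\l_j y')-x_{j,n+1})$, solving a rescaled version of the Dirichlet problem \ref{approx prob} with rescaled Dirichlet data $\wt\Phi_j=\graph\wt\phi_j$ and rescaled right-hand side $\wt H_j+\sum D_i\wt f^i_j=\l_j(H_{k_j}+\sum D_i f^i_{k_j})(x_j+\l_j\,\cdot\,)$.

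By Theorem \ref{graph uniformly close to a plane}, for any $\e>0$ and for all $j$ sufficiently large, $\spt\wt T_j\cap B^{n+1}_1(0)$ is $\e$-close to either an $n$-dimensional linear subspace $P_j$ (interior case, when $\liminf_j\l_j^{-1}\dist(x_j,\Phi_{k_j})=\infty$) or an $n$-dimensional halfspace $P_j^+$ (boundary case, with $\wt\Phi_j$ also $\e$-close to $\partial P_j^+$). Theorem \ref{close-to-plane thm} applied at scale $\l_j$ supplies the density bound $\o_n^{-1}\mu_{\wt T_j}(B^{n+1}_1(0))\leq 1+\e$ (respectively $\tfrac12+\e$). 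For the scale-invariant $L^p$ norm of the rescaled mean curvature (with any fixed $p\geq 2n$), the $H_{k_j}$-term contributes at most $c\l_j\|H\|_0\to 0$, while the $\sum D_i f^i_{k_j}$-term vanishes if $B_{\l_j}(x_j)$ avoids the transition region $\{y:\dist(y,\partial\W_{k_j}\times\R)<\d_{k_j}\}$; otherwise, combining $\|\sum D_i f^i_{k_j}\|_0=o(\d_{k_j}^{-1/2})$ from Remark \ref{lpoff} with the thickness bound $\d_{k_j}$ for this region and the density bound for $\wt T_j$ in thin slabs yields convergence to $0$ by the same case split used in Case 2 of the proof of Theorem \ref{graph uniformly close to a plane}. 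In the boundary case, the required $C^{1,\a}$-smallness of $\wt\Phi_j$ for Allard's boundary theorem follows from $(\W_{k_j},\Phi_{k_j})\in\B^\a_r$ by rescaling, since the $C^{1,\a}$ norms of $\wt\Phi_j$ at scale $1$ are bounded by $\l_j^\a\k_{(\W_{k_j},\Phi_{k_j})}\to 0$.

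Allard's regularity theorem (interior or boundary version as appropriate) then produces a fixed $\theta\in(0,1)$, independent of $j$, such that after choosing $\e$ small enough at the outset, $\wt T_j\cap B^{n+1}_\theta(0)$ is a $C^{1,\a}$ graph over $P_j$ (respectively $P_j^+$) with $C^{1,\a}$ norm strictly less than $\e_0$. Unrescaling, $\k(\graph u_{k_j},x_j,\theta\l_j)<\e_0$; choosing $C_0>1/\theta$ at the outset ensures $\theta\l_j=C_0\theta\,\r_{x_j}>\r_{x_j}$, so that the supremum definition of $\r_{x_j}$ forces the opposite inequality $\k(\graph u_{k_j},x_j,\theta\l_j)\geq\e_0$, a contradiction. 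The principal technical obstacle is the $L^p$ mean curvature estimate in the thin transition region, where $\sum D_i f^i_{k_j}$ may be large pointwise: it requires a careful balance between the rates $\l_j\to 0$ and $\d_{k_j}\to 0$ and the bound $\|\sum D_i f^i_{k_j}\|_0=o(\d_{k_j}^{-1/2})$, together with the narrowness of the transition region and the uniform mass bounds from Lemma \ref{minimizing property of Tk}. This interplay was anticipated in the very construction of $f_k$ and the sequence $\d_k$ in Section \ref{dirichlet problem section}.
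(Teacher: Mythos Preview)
Your blow-up/Allard strategy has a genuine gap in the mean curvature estimate. Allard's theorem (interior or boundary) requires the generalized mean curvature $H_{k_j}+\sum_iD_if^i_{k_j}$ to have small scale-invariant $L^p$ norm at scale $\l_j$. You acknowledge this as ``the principal technical obstacle'' and propose to control it using $\|\sum_iD_if^i_{k_j}\|_0=o(\d_{k_j}^{-1/2})$ together with the thickness $\d_{k_j}$ of the transition strip. But this does not work: in the contradiction setup there is no a priori relation between $\l_j=C_0\r_{x_j}$ (determined by the solution $u_{k_j}$) and $\d_{k_j}$ (a parameter of the approximation). The crude bound gives $\l_j^{1-n/p}\big(\int_{B_{\l_j}}|\sum D_if^i_{k_j}|^p d\mu\big)^{1/p}\le c\,\l_j\,\|\sum D_if^i_{k_j}\|_0\le c\,\l_j\,\e_{k_j}\d_{k_j}^{-1/2}$ with $\e_k\to 0$, and one can easily have $\l_j\d_{k_j}^{-1/2}\to\infty$ faster than $\e_{k_j}\to 0$. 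Trying to improve this via ``density in thin slabs'' fails precisely in the case of interest, where the blow-up is close to a \emph{vertical} plane in $\partial\W\times\R$: then essentially the whole graph in $B_{\l_j}$ may sit inside the strip, so no gain is possible from the strip's thinness. The appeal to ``the same case split used in Case~2 of the proof of Theorem~\ref{graph uniformly close to a plane}'' is misplaced: that case is the regime $\l_j\ll d_j=\dist(x_j,\partial\W\times\R)$, so Allard is applied in an \emph{interior} ball $B_{d_j}(x_j)$ with $d_j\le 2\d_j$, which is exactly what supplies $d_j\|\sum D_if^i\|_0\to 0$. You have no such relation at scale $\l_j$.

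The paper's proof avoids this entirely by \emph{not} using Allard at this step. It works at fixed $k$, picks a point $x$ where $\r_x/d(x)$ is minimal (with $d(x)=\dist(x,\partial B^n_\s(0)\times\R)$), uses Theorem~\ref{graph uniformly close to a plane} and a patching argument (nearby points satisfy $\r_z\ge\r_x/8$ by minimality) to extend the graphical representation to $B_{(1+\gamma)\r_x}(x)$, and then applies \emph{divergence-form $C^{1,\a}$ Schauder estimates} to the equation $D_i\big(D_iv/\sqrt{1+|Dv|^2}\big)=H_k+D_if^i_k$. The point is that Schauder sees $f_k$ through its H\"older seminorm $[f_k]_\a$, which is uniformly bounded by construction (property (ii) in the approximation scheme), so $\r_x^\a[f_k]_\a$ is small when $\r_x$ is small---regardless of how large $\|\sum D_if^i_k\|_0$ might be. Allard, by contrast, cannot see the divergence structure and is forced to treat $\sum D_if^i_k$ as a pointwise function, which is exactly what blows up when $\W$ is only $C^{1,\a}$. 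This structural difference is the reason $f_k$ was built the way it was, and your approach bypasses it.
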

\begin{proof}
Let $\s=r/8$. By Theorem \ref{graph uniformly close to a plane} we have that for any $\e>0$ there exists $\l_0=\l_0(\e)$ such that for any $k$, $x_k\in\graph u_k\cap (B^n_\s(0)\times\R)$ and $\l\le\l_0$ 
\begin{equation*}\label{hypth}
\l^{-1}\sup_{\graph u_k\cap B^{n+1}_\l(x_k)}\dist(y-x_k,P)<\e.
\tag{(1)}
\end{equation*}
for some $n$-dimensional linear subspace $P=P(x_k,\e,\l)$.

We fix a $k$, and define the following:
\[d(x)=\dist(x,\partial B^n_\s(0)\times\R)\]

\[\theta_1=\min\left\{\frac{\r_{x}}{d(x)}: x\in\graph u_k\cap \Phi_k\cap({B}^n_{\s}(0)\times\R)\right\}\]
\[\theta_2=\min\left\{\frac{\r_{x}}{d(x)}: x\in(\graph u_k\setminus\Phi_k) \cap({B}^n_{\s}(0)\times\R)\right\}.\]
Note that both these minima are attained. Given $\e$ small enough (that will be determined later), let $\l_0=\l_0(\e)$ be such that \ref{hypth} holds. We can assume that
\begin{equation*}\label{mintheta}
\min\{\theta_1,\theta_2\}\le\frac18\min\{1/2,\l_0/\s\}
\tag{(2)}
\end{equation*}
since otherwise for all $x\in\graph u_k\cap (B_{\s/2}^n(0)\times\R)$
\[\r_{x}\ge\min\{\theta_1,\theta_2\} d(x)\ge \frac18\min\{\s/4,\l_0/2\}\]
and hence the lemma is trivially true.

Let $x\in\graph u_k \cap (\ov{B}^n_{\s}(0)\times\R)$ be such that the following holds
\begin{itemize}
\item[(i)] If $\theta_1\le 4\theta_2$ then $x\in\Phi_k$ and $\r_x=\theta_1 d(x)$
\item[(ii)]  If $\theta_1>4\theta_2$ then $x\notin\Phi_k$ and $\r_x=\theta_2 d(x)$
\end{itemize}

In both cases, using \ref{mintheta}, we have that
\[\r_{x}\le4\min\{\theta_1,\theta_2\} d(x)\le \l_0/2.\]
Therefore there exists an $n$-dimensional linear subspace $P_0$ such that
\begin{equation*}\label{Pclo}
(2\r_x)^{-1}\sup_{\graph u_k\cap B^{n+1}_{2\r_x}(x)}\dist(y-x,P_0)<\e.
\tag{(3)}
\end{equation*}
Remark \ref{change of basis rmk}, the definition of $\r_x$ and \ref{Pclo} imply that \begin{equation*}\label{voverp0}
\graph u_k\cap B^{n+1}_{\r_x}(x)=\graph v\cap B^{n+1}_{\r_x}(x)
\tag{(4)}
 \end{equation*}
 where $v\in C^{1,\a}((x+(P_0\cap U))\cap B^{n+1}_{\r_x}(x); P^\perp_0)$ is such that $\|v\|_{1,\a}<6\e_0$ and where $U$ is a $C^{1,\a}$ domain of $P_0$, provided that $2\e<\e_0/4$.
 
We will show that we can extend $v$ in $B^{n+1}_{(1+\gamma)\r_x}(x)$ for some $\gamma\in (0,1)$ such that \ref{voverp0} still holds with $(1+\gamma)\r_x$ in place of $\r_x$ and $\|v\|_{1,\a}< c\e_0$ for some constant $c$.

Let $z\in B_{2\r_x}^{n+1}(x)\cap \graph u_k$. Then because of the choice of $x$ and using \ref{mintheta} we have that
\[\r_z\ge d(z)\min\{\theta_1,\theta_2\}\ge \frac14 \r_x\frac {d(z)}{d(x)}\ge \frac14 \r_x\left(1-\frac{2\r_x}{d(x)}\right)\ge\r_x/8.\]
Furthermore by \ref{Pclo}
\[(\r_x/8)^{-1}\sup_{\graph u_k\cap B^{n+1}_{\r_x/8}(z)}\dist(y-z,P_0)<16\e.\]
Hence, by Remark \ref{change of basis rmk}, $B^{n+1}_{\r_x/8}(z)\cap\graph u_k$ can be written as the graph of a $C^{1,\a}$ function above $z+P_0$ with $C^{1,\a}$ norm less that $6\e_0$, provided that $16\e<\e_0/4$. Since $\dist(z-x, P_0)<2\e\r_x$, by a translation we have that
 \begin{equation*}\label{vzoverp0}
\graph u_k\cap B^{n+1}_{\r_x/8}(z)=\graph v_z\cap B^{n+1}_{\r_x/8}(z)
\tag{(5)}
 \end{equation*}
 where  $v_z\in C^{1,\a}(x+(P_0\cap U_z);P_0^\perp)$ is such that 
  $\|v_z\|_{1,\a}<7\e_0$ and where $U_z$ is a $C^{1,\a}$ domain of $P_0$.

Note that for $z\in\graph u_k\cap\partial B^{n+1}_{\r_x}$, because of \ref{Pclo}, we have that
 \[\graph v_z\cap  B^{n+1}_{\r_x/8}(z)\cap B^{n+1}_{\r_x}(x)\ne\emptyset\]
 and so
 \[\graph v_z\cap \graph v \ne\emptyset\]
where $v$ is as defined in \ref{voverp0}. Therefore, using the graphical representations in \ref{vzoverp0} for any $z\in\graph u_k\cap\partial B^{n+1}_{\r_x}$, we can extend the function $v$ so that $v\in C^{1,\a}((x+(P_0\cap U))\cap B^{n+1}_{(1+1/8)\r_x}(x);P_0^\perp)$ and with
\begin{equation*}\label{extv}
\|v\|_{1,\a}<c\e_0
\tag{(6)}
\end{equation*}
 where $c$ is an absolute constant. For $\gamma\in(0,1/16)$, we can check (using again \ref{Pclo}) that for any $y\in\graph u_k\cap B^{n+1}_{(1+\gamma)\r_x}(x)$ there exists $z\in\graph u_k\cap\partial B_{r_x}^{n+1}(x)$ such that $|z-y|<\r_x/8$. Hence for the extended function $v$ we have that
\begin{equation*}\label{idi}
\graph u_k\cap B^{n+1}_{(1+\gamma)\r_x}(x)=\graph v\cap B^{n+1}_{(1+\gamma)\r_x}(x).\tag{(7)}
\end{equation*}

Furthermore $v$ satisfies the following equation: 
\[D_i\left(\frac{D_iv}{\sqrt{1+|Dv|^2}}\right)=\dvg {f}_k+{H}_k\text{  in  }U\cap B^{n}_{(1+\gamma/2)\r_x}(0).\]
where as in  Remark \ref{change of basis rmk}, for $x'\in \W_k, y'\in x+P_0$ we identify $(x', u_k(x'))$ with $(y', v(y')$ using \ref{idi}.

For this function $v$ if either $U=\R^n$ or $0\in\partial U$ we can apply the interior or boundary $C^{1,\a}$ Schauder estimates respectively, in $B^{n}_{\r_x}(0)\subset B^{n}_{(1+\gamma/2)\r_x}(0)$, which imply that
\begin{equation*}\label{schauder estimates}
\|v\|_{1,\a,B^n_{\r_x}(0)}\le C\left(\e+\r_x^\a[{f}_k]_{\a,B^n_{(1+\gamma/2)\r_x}(0)}+\r_x\|{H}_k\|_{0,B^n_{(1+\gamma/2)\r_x}(0)}\right)
 \tag{(8)}
\end{equation*}
where $C$ is a constant depending only on $\a,n,\gamma$. In this case \ref{schauder estimates} implies a lower bound for $\r_x$.
To see this note that the LHS is bounded below by $c^{-1} \e_0$, where $c$ is the absolute constant in \ref{extv}, since if it wasn't true then (by \ref{extv}) we would have that $\|v_k\|_{1,\a,B^n_{(1+\gamma/2)\r_x}(0)}\le\e_0$, which would contradict the definition of $\r_x$. Hence taking $\e$ small enough, the inequality gives a lower bound on $\r_x$, that is independent of $k$.

To finish the proof we need to show that we can indeed apply the Schauder estimates, i.e. we need to show that either $0\in\partial U$ or $U=\R^n$.

If $x\in\Phi_k$ then $0\in\partial U$. So we can assume that $x\notin\Phi_k$ which implies that $\theta_1>4\theta_2$ (i.e. we are in case (ii), as described at the beginning of the proof). We will show that $\Phi_k\cap B^{n+1}_{(1+\gamma)\r_x}(x)=\emptyset$ and hence $U=\R^n$. Assume that for some $\ov{x}\in\Phi_k$, $|x-\ov{x}|<2\r_x$. Then
\[\frac{\r_{\ov{x}}}{d(\ov{x})}\ge\theta_1>4\theta_2=4\frac{\r_x}{d({x})}\Rightarrow\]
\[\r_{\ov{x}}>4\left(1-\frac{|x-\ov{x}|}{d(x)}\right)\r_x\ge4(1-2\theta_2)>\frac72\r_x\]
where we have used \ref{mintheta}. Hence $B^{n+1}_{3\r_x/2}(x)\subset B^{n+1}_{\r_{\ov{x}}}(\ov{x})$. But this would contradict the definition of $\r_x$ and so $\dist(x,\Phi_k)>2\r_x$.
\end{proof}

\begin{rmk}\label{rmk on 1a regularity}
\emph{Theorem \ref{theta is bounded below}} implies that for any $\e>0$ there exists $\r_0=\r_0(\e)$
such that for all $k$
\[\|\nu_k(x)-\nu_k(y)\|\le \e|x-y|^\a\,,\,\forall\,x,y\in\graph u_k\cap (B^n_{r/16}(0)\times\R): |x-y|<\r_0\]
where $\nu_k$ denotes the downward unit normal of $\graph u_k$.

Recall that $\partial\W$, $\Phi$ are compact, $C^{1,\a}$ embedded submanifolds, which along with \emph{Remark \ref{change of basis rmk}} imply that for any $\e>0$ there exists $\wt\r_0=\wt\r_0(\e)$
such that for all $k$
\[\|\nu_k(x)-\nu_k(y)\|\le \e|x-y|^\a\,,\,\forall\,x,y\in\graph u_k: |x-y|<\wt{\r}_0\]
where $\wt{\r}_0$ now also depends on 
\[\sup_r\{r:\k(\partial\W,x,r)<1,\forall x\in\partial\W\text{  and  }\k(\Phi,x,r)<1,\forall x\in\Phi\}.\]
Let $M=\sup_{\W_k}|u_k|$ and recall that $M$ is independent of $k$ \emph{(cf. Remark \ref{lpoff})}.
Covering $\W_k\times[-M,M]$, by balls of radius $\wt\r_0$ we conclude that
\[\|\nu_k(x)-\nu_k(y)\|\le C|x-y|^\a\,,\,\forall\,x,y\in\graph u_k\]
where $C$ does not depent on $k$.
\end{rmk}

\subsection*{Gradient estimates for the solutions to the approximating problems \ref{approx prob}}\label{Grad est section}

Our goal here is to show a priory $C^{1,\a}$ estimates for the solutions $u_k\in C^{1,\a}(\ov{\W}_k)$ of \ref{approx prob}. That will allow us (cf. Theorem \ref{existence lemma}) to apply the Leray-Schauder theory to prove existence of such  solutions.

We first show that for each $k$, we have boundary gradient estimates for a solution $u_k$, by using local barriers at each boundary point. In particular we have the following:
\begin{lemma}\label{boundary gradient estimates lemma}
Let $u_k\in C^{1,\a}(\ov{\W}_k)$ be a solution of \emph{\ref{approx prob}} then
\[\|Du_k\|_{0,\partial\W_k}\le C\]
where $C$ depends on $\d_k$, $\|u_k\|_0$, $\|H_{\partial\W_k}\|_0$ and $\|\phi_k\|_2$.
\end{lemma}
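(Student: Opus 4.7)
The plan is to adapt the classical Serrin--Gilbarg--Trudinger barrier construction for the prescribed mean curvature equation (cf.\ \cite[Ch.~14]{GT}) to the modified equation \ref{approx prob}. Since $u_k=\phi_k$ on $\partial\W_k$, obtaining a bound on $\|Du_k\|_{0,\partial\W_k}$ reduces to bounding $|D_{\nu}u_k|$, where $\nu=\eta_k$ is the inward unit normal, and for this it suffices to construct local upper and lower barriers at each boundary point.

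First I would fix a boundary point $x_0\in\partial\W_k$ and work in a neighborhood $N=\{x'\in\W_k:d(x')<\d_k\}$, where $d(x')=\dist(x',\partial\W_k)$ (smooth in $N$ since $\partial\W_k$ is $C^\infty$ and $\d_k$ was chosen smaller than the reach of $\partial\W_k$). Set $\tilde\phi_k(x')=\phi_k(\proj_{\partial\W_k}(x'))$ and try the standard ansatz
\[
v^+(x')=\tilde\phi_k(x')+\psi(d(x'))
\]
with $\psi(0)=0$, $\psi$ smooth, increasing, and strictly concave; a convenient choice is $\psi(t)=\tfrac{K}{L}\bigl(1-e^{-Lt}\bigr)$, so that $\psi''=-L\psi'$. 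On the strip $\{d<\d_k\}$ we have $H_k\equiv0$ by construction, and in the region $\{v^+>\phi_k\}$ (which contains a neighborhood of $\partial\W_k$ once $\psi'(0)=K$ is large) we have $f_k=\eta_k$ by \ref{f}, so the RHS of \ref{approx prob} is just $\dvg\eta_k$, bounded by $(n-1)\|H_{\partial\W_k}\|_0$ (recall $\eta_k$ is extended along normal lines, so $\dvg\eta_k(x')=H_{\partial\W_k}(\proj_{\partial\W_k}(x'))$ up to lower order terms in $d$).

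Next I would carry out the standard computation of the mean-curvature operator acting on $v^+$. One finds, with $p=\psi'(d)\nu(x')+D\tilde\phi_k(x')$ and $\mathcal{A}_{ij}=\d_{ij}-p_ip_j/(1+|p|^2)$,
\[
\sum_iD_i\!\left(\frac{D_iv^+}{\sqrt{1+|Dv^+|^2}}\right)=\frac{\psi''(d)}{(1+|p|^2)^{3/2}}(1+|D\tilde\phi_k|^2_\tau)+H_{\partial\W_k}(\proj x')\frac{\psi'(d)}{\sqrt{1+|p|^2}}+R,
\]
where $|D\tilde\phi_k|_\tau^2$ is the tangential component and $R$ is a remainder controlled by $\|\phi_k\|_2$, $\|H_{\partial\W_k}\|_0$ and $(\psi')^{-1}$. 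For the supersolution inequality
\[
\sum_iD_i\!\left(\frac{D_iv^+}{\sqrt{1+|Dv^+|^2}}\right)\le\dvg\eta_k=H_{\partial\W_k}+O(d\|H_{\partial\W_k}\|_0^2)
\]
I would choose $L$ large (depending on $\|\phi_k\|_2,\|H_{\partial\W_k}\|_0,n$) to make the $\psi''$-term dominate $R$, and then choose $K=\psi'(0)$ large enough so that on $\partial\W_k$ we have $\psi(0)=0=u_k-\phi_k$ while $\psi(\d_k)=\tfrac{K}{L}(1-e^{-L\d_k})\ge 2\|u_k\|_0$ (so that $v^+\ge u_k$ on the inner boundary $\{d=\d_k\}$). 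The comparison principle for the quasilinear operator then yields $u_k\le v^+$ in $N$, and differentiating at $x_0$ gives $D_{\nu}u_k(x_0)\ge -K-\|D\tilde\phi_k\|_0$. The lower barrier $v^-=\tilde\phi_k-\psi(d)$ is constructed symmetrically, using that in $\{v^-<\phi_k\}$ the formula $f_k=X-\eta_k$ gives $\sum_iD_if_k^i=-\dvg\eta_k$ (cf.\ \ref{f} and Remark \ref{lpoff}). Combined, these give the two-sided bound on $|D_\nu u_k|=|Du_k|$ on $\partial\W_k$ with $C=C(\d_k,\|u_k\|_0,\|H_{\partial\W_k}\|_0,\|\phi_k\|_2)$.

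The main obstacle is verifying that the ansatz $v^+$ actually produces a supersolution of the \emph{modified} equation, i.e.\ correctly handling the extra divergence term $\sum D_if_k^i$. The construction of $f_k$ in Section 3 is tailored precisely so that in the regions $U^\pm$ of \ref{U}, where the barriers live, $\sum D_if_k^i$ equals $\pm\dvg\eta_k$, which is bounded by $(n-1)\|H_{\partial\W_k}\|_0$; this replaces the role that $H$ would play in the classical barrier argument and explains why the constant depends on $\|H_{\partial\W_k}\|_0$ rather than $\|H\|_0$. The dependence on $\d_k$ enters through the need to fit the barrier across the strip of width $\d_k$ while keeping $\psi(\d_k)\ge 2\|u_k\|_0$, which forces $K$ to grow as $\d_k\to 0$.
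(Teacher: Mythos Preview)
Your approach is the paper's, but there is a real gap in how you invoke the comparison principle. The right-hand side of \ref{approx prob} is $H_k(x',u_k(x'))+\sum_iD_if_k^i(x',u_k(x'))$, which is evaluated at $u_k$, not at the barrier; so knowing that $(x',v^+(x'))\in U^+$ does not tell you what equation $u_k$ satisfies on the comparison set. (Also, membership in $U^+$ requires $x_{n+1}\ge\tilde\phi_k+d$, not merely $>\tilde\phi_k$; cf.\ \ref{U}.) The paper handles this by restricting to the contact set $N^+=\{x'\in N:u_k(x')\ge\psi^+(x')\}$: there $u_k\ge\psi^+\ge\phi_k^1>\phi_k+d$, hence $(x',u_k(x'))\in U^+$ and the equation for $u_k$ becomes the auxiliary PMC equation with right-hand side $\dvg\eta_k$; since $u_k=\psi^+$ on $\partial N^+$, comparison with the supersolution $\psi^+$ of that \emph{same} auxiliary equation forces $N^+$ to collapse. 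This is why the paper introduces the intermediate functions $\phi_k^1>\phi_k+d$ and $\phi_k^2<\phi_k-d$ rather than building the barrier directly on $\tilde\phi_k$.

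Your ansatz can be repaired along the same lines: ensure $\psi(d)\ge d$ on the whole strip (your exponential $\psi$ does not guarantee this for all $d\in(0,\d_k)$ without further constraints on $K,L$), and run the comparison on $\{u_k\ge v^+\}$ rather than asserting that $v^+$ is a supersolution of \ref{approx prob} itself. With those two fixes your argument coincides with the paper's.
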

\begin{proof}

For any $x'_0\in\partial\W_k$ let $N=B^n_r(x'_0)\cap\W^{\d_k/2}_k$ and let ${\phi}^1_k,\phi^2_k:N\rightarrow \R$ be $C^2$ functions that satisfy the following
\[{\phi}^1_k(x'_0)=\phi^2_k(x'_0)=\phi_k(x'_0)\]
\[{\phi}^1_k(x')\ge\phi_k(x')\ge \phi^2_k(x') \hspace{.5cm}\forall x'\in \partial N\cap\partial\W_k\]
\[{\phi}^1_k(x')\ge \|u_k\|_0\,\,,\,\,{\phi}^2_k(x')\le-\|u_k\|_0\hspace{.5cm} \forall x'\in\partial N\setminus\partial\W_k\]
and
\[{\phi}^1_k(x')>\phi_k(x')+\dist(x',\partial\W_k)\,\,,\,\,{\phi}^2_k(x')<\phi_k(x')-\dist(x',\partial\W_k)\hspace{.5cm}\forall x'\in N\]
so that
\[(x',{\phi}^1_k(x'))\in U^+\,\,,\,\,(x',{\phi}^2_k(x'))\in U^-\hspace{.5cm}\forall x'\in N\]
where $U^\pm$ are as defined in \ref{U}, so that
\[\sum_{i=1}^nD_i f_k^i=\begin{cases}\dvg\eta_k &\text{ in }U^+\\
-\dvg\eta_k &\text{ in }U^-\end{cases}\]
where $\eta_k$ is the inward pointing unit normal to $\partial\W_k\times\R$, extended in $\W_k^{\d_k}\times\R$, so that $\eta_k(x', x_{n+1})=\eta_k(\proj_{\partial\W_k}(x'),x_{n+1})$.

We look at the following Dirichlet problems:
\begin{equation*}\label{new problem}
\begin{split}
\sum_{i=1}^nD_i\left(\frac{D_i{u}^1_k}{\sqrt{1+|D{u}^1_k|^2}}\right)&=\dvg{\eta}_k \text{  in  } N\\
{u}^1_k&=\phi^1_k \text{  on  }\partial N
\end{split}
\tag{(1)}
\end{equation*}
\begin{equation*}\label{new problem'}
\begin{split}
\sum_{i=1}^nD_i\left(\frac{D_i{u}^2_k}{\sqrt{1+|D{u}^2_k|^2}}\right)&=-\dvg{\eta}_k \text{  in  } N\\
{u}^2_k&=\phi^2_k \text{  on  }\partial N.
\end{split}
\tag{(2)}
\end{equation*}

 By standard PDE theory \cite[Theorem 14.6]{GT} we know that there exists a positive function $\psi:\R\rightarrow\R_+$, such that for the functions defined by 
\[\psi^+(x')={\phi}^1_k(x')+\psi(d(x'))\,\,,\,\,\psi^-(x')=\phi^2_k(x')-\psi(d(x'))\]
where $d(x')=\dist(x',\partial\W_k)$, we have that
\[\sum_{i=1}^nD_i\left(\frac{D_i\psi^+}{\sqrt{1+|D\psi^+|^2}}\right)\le\dvg{\eta}_k\text{  on  }N\]
and
\[\sum_{i=1}^nD_i\left(\frac{D_i\psi^-}{\sqrt{1+|D\psi^-|^2}}\right)\ge-\dvg{\eta}_k\text{  on  }N.\]
So $\psi^+$ is an upper barrier for a solution $u^1_k\in C^{1,\a}(\ov{\W})$ of the problem \ref{new problem} at the point $x'_0$,  $\psi^-$ is a lower barrier for a solution ${u}^2_k\in C^{1,\a}(\ov{\W})$ of the problem \ref{new problem'} at the point $x'_0$ and their gradients satisfy an estimate
\begin{equation*}\label{barest}
|D\psi^+(x'_0)|, |D\psi^-(x'_0)|\le C(\|H_{\partial\W_k}\|_0,\|{\phi}_k\|_2,{\d}_k^{-1}\|u_k\|_0),
\tag{(3)}
\end{equation*}
a constant depending on $\|H_{\partial\W_k}\|_0,\|{\phi}_k\|_2$ and ${\d}_k^{-1}\|u_k\|_0$.

We claim that $\psi^+$ and $\psi^-$ are also an upper and respectively a lower barrier for $u_k$ at $x'_0$. 

Let
\[N^+=\{x'\in N: u_k(x')\ge \psi^+(x')\}\,\,,\,\,N^-=\{x'\in N: u_k(x')\le \psi^-(x')\}.\]
Since $\psi^+(x')\ge u_k(x')\ge\psi^-(x')$ for all $x'\in\partial N$, we have that 
\[u_k(x')=\psi^+(x') \text{ on }\partial N^+\,\,\text{  and  }\,\,u_k(x')=\psi^-(x') \text{ on }\partial N^-.\] 
Furthermore since $\psi^+(x')\ge {\phi}^1_k(x')$, $\psi^-(x')\le\phi_k^2(x')$, we have that 
\[(x',u_k(x'))\in U^+\,,\,\forall x'\in N^+\,\,\text{ and }\,\,(x',u_k(x'))\in U^-\,,\,\forall x'\in N^-.\]
Hence
\[\sum_{i=1}^nD_i\left(\frac{D_iu_k}{\sqrt{1+|Du_k|^2}}\right)=\sum_{i=1}^nD_i{f}^i_k=\dvg{\eta}_k\text{  in  }N^+\]
and
\[\sum_{i=1}^nD_i\left(\frac{D_iu_k}{\sqrt{1+|Du_k|^2}}\right)=\sum_{i=1}^nD_i{f}^i_k=-\dvg{\eta}_k\text{  in  }N^-.\]
Thus, by the comparison principle we have that $u_k(x')=\psi^+(x')$ for all $x'\in N^+$, $u_k(x')=\psi^-(x')$ for all $x'\in N^-$ and so $\psi^+$ and $\psi^-$ are upper and respectively lower barriers for $u_k$.
\end{proof}

We claim now that the boundary gradient bounds (Lemma \ref{boundary gradient estimates lemma}) along with the $C^{1,\a}$ estimates that we have shown for the graph of $u_k$ as a submanifold (Theorem \ref{theta is bounded below}), imply global gradient bounds for the function $u_k$.

\begin{lemma}\label{global gradient bounds lemma}
Let $u_k\in C^{1,\a}(\ov{\W}_k)$ be a solution of \emph{\ref{approx prob}} then
\[\sup_{\ov{\W}_k}|Du_k|\le C\]
where $C$ depends on $\|Du_k\|_{0,\partial\W_k}$, $\sup_{\W_k} |H_k|+|DH_k|$, $\|\phi_k\|_2$ and $\sup_{\partial\W_k} |H_{\partial\W_k}|+|DH_{\partial\W_k}|$.
\end{lemma}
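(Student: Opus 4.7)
The argument combines Lemma~\ref{boundary gradient estimates lemma} with the classical maximum principle strategy for the prescribed mean curvature equation. Set $v := \sqrt{1+|Du_k|^2}$ and $\tilde H_k := H_k + \sum_i D_i f_k^i$, so that the equation takes the nondivergence form
\[
a^{ij}(Du_k)\,D_{ij}u_k = v\,\tilde H_k, \qquad a^{ij}(p)=\d^{ij}-\frac{p_ip_j}{1+|p|^2}.
\]
Differentiating in $x_\ell$, multiplying by $D_\ell u_k/v$, and summing on $\ell$ yields (by the standard computation, as in \cite[\S15]{GT} for the mean curvature operator) a pointwise elliptic inequality for $v$. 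Introducing the auxiliary quantity $w := \log v + \gamma\, u_k$ with $\gamma>0$ chosen large compared to $\|\tilde H_k\|_0 + \|D\tilde H_k\|_0$, one obtains $a^{ij}D_{ij}w + b^i D_i w \le 0$ at any interior maximum, where $a^{ij}, b^i$ are bounded. The maximum principle then gives $\max_{\ov\W_k} w = \max_{\partial\W_k} w$, and combining with the $L^\infty$ bound of Remark~\ref{supbound} and the boundary gradient bound of Lemma~\ref{boundary gradient estimates lemma} produces a global bound on $v$, hence on $|Du_k|$.

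To match the claimed dependence of $C$, I would use Remark~\ref{lpoff}: off the $n$-dimensional singular set $\graph\phi_k|_{\partial\W_k}$, the divergence $\sum_i D_i f_k^i$ is smooth, and on each of the three regions of the piecewise construction in \ref{f} it coincides with one of $\dvg\eta_k$, $-\dvg\eta_k$, or $\dvg Y$, extended via nearest-point projection. The $C^1$ norms of these vector fields are controlled by $\|H_{\partial\W_k}\|_0 + \|DH_{\partial\W_k}\|_0$ through the construction (Lemma~\ref{match divergence}), so
\[
\|\tilde H_k\|_0 + \|D\tilde H_k\|_0 \le C\Bigl(\sup_{\W_k}(|H_k|+|DH_k|) + \sup_{\partial\W_k}(|H_{\partial\W_k}|+|DH_{\partial\W_k}|)\Bigr).
\]
The boundary bound in Lemma~\ref{boundary gradient estimates lemma} contributes the $\|Du_k\|_{0,\partial\W_k}$ and $\|\phi_k\|_2$ dependence; $\|u_k\|_0$ is absorbed via Remark~\ref{supbound}.

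The principal obstacle is that $f_k$ is only $C^{0,\a}$ across the interfaces between the regions $U^+$, $U^-$, and $(V_k\setminus\W_k^{\d_k})\times\R$, so the elliptic inequality for $w$ cannot be derived by naive differentiation near those interfaces. I would resolve this by approximating $f_k$ by smooth vector fields with $\dvg f_k$ uniformly bounded in $C^1$; this is possible because the transitional vector fields $X$ and $Y$ were constructed (via Lemma~\ref{match divergence}) precisely so that their boundary data match $\eta_k$, making $\dvg f_k$ continuous across the interior interfaces even though $f_k$ itself is not. Once this regularization step is carried out, the preceding maximum principle argument applies verbatim, and passing to the limit recovers the estimate for the original $u_k$.
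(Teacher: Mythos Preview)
Your maximum-principle approach has a structural gap that the regularization step does not repair. The quantity you need bounded is not the $C^{0,\a}$ norm of $f_k$ but the $x_{n+1}$-derivative of $\dvg f_k$ (or, equivalently, the monotonicity $D_{n+1}\tilde H_k\ge 0$ that the Korevaar--Simon estimate uses). In the strip $\{x'\in\W_k^{\d_k},\ |x_{n+1}-\phi_k(x')|<\dist(x',\partial\W_k)\}$ the divergence $\dvg f_k$ must pass from $+\dvg\eta_k$ (its value in $U^+$) to $-\dvg\eta_k$ (its value in $U^-$) over a vertical interval of length $2\dist(x',\partial\W_k)$; hence $D_{n+1}\dvg f_k$ is of order $\|\dvg\eta_k\|_0/\dist(x',\partial\W_k)$, which blows up as $x'\to\partial\W_k$ and has the wrong sign. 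No smoothing of $f_k$ can produce a uniform bound on $\|D\tilde H_k\|_0$ in this strip, so your inequality $a^{ij}D_{ij}w+b^iD_iw\le 0$ cannot be established there, and the interior maximum of $w$ is not excluded from the transition region. Your remark that the matching boundary data of $X,Y$ make $\dvg f_k$ continuous across interfaces is also off: $X$ and $Y$ are divergence-free, so $\dvg f_k$ equals $\pm\dvg\eta_k$ on $U^\pm$ and these do not agree.

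The paper's proof accepts this obstruction and does not attempt a global differentiated inequality. It applies the standard gradient estimate \cite{SimonKorevaar} only in the domain where $D_{n+1}\tilde H_k\ge 0$ holds (namely, outside the transition strip), and then bounds $|Du_k|$ in the remaining bad set by an entirely different mechanism: the uniform $C^{1,\a}$ estimate for $\graph u_k$ as a submanifold (Theorem~\ref{theta is bounded below}, Remark~\ref{rmk on 1a regularity}). Concretely, the H\"older control on the unit normal $\nu_k$, together with the choice $\d_k^{\a/2}\|D\phi_k\|_0\to 0$ from \eqref{dk2}, forces a dichotomy at each boundary point: either $|Du_k(x_0')|\le 2K$ (and then $|Du_k|$ stays bounded throughout $B^{n+1}_{\d_k^{1/2}}(x_0)$), or $|Du_k(x_0')|>2K$ (and then the graph is so steep that it never enters the transition strip at all, making the required estimate vacuous there). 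This use of the manifold regularity from Section~\ref{dirichlet problem section} is the missing ingredient in your argument.
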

\begin{proof}

Recall that by the construction of the approximating problems, in the domain
\[((\W_k\setminus\W_{k}^{\d_k})\times\R)\cup\{(x',x_{n+1}):x'\in\W^{\d_k}_k, |x_{n+1}-\phi_k(x')|>\dist(x',\partial\W_k)\}\] 
the mean curvature of $\graph u_k$, $H_k+\sum D^i f^i_k$ is smooth and its derivative with respect to the $x_{n+1}$-variable is non-negative (cf. Remark \ref{lpoff}). Hence, by standard gradient estimates for a solution to the prescribed mean curvature equation \cite{SimonKorevaar}, it suffices to show that that for any $x'_0\in\partial\W_k$
 
 \begin{equation*}\label{refgrad}
 |Du_k(x')|\le C\,\,,\,\,\,\forall\, x'\in B_{2\d_k}^n(x'_0): |u_k(x')-\phi_k(x')|<2\dist(x',\partial\W_k)
  \tag{(1)}
 \end{equation*}
for some constant $C$ as in the statement of the lemma.

Let $x_0=(x'_0, u_k(x'_0))\in\Phi_k$. Recall that we picked $\d_k$ so that we can extend $\phi_k$ in $\W^{2\d_k}$ by letting $\phi_k(x')=\phi_k(\proj_{\partial\W_k}(x'))$ and by \ref{projprop} we have that for any $x',y'\in B^n_{2\d_k}(x_0')$
\[|\phi_k(x')-\phi_k(y')|\le C\d_k\|D\phi_k\|_0\]
where $C$ is a constant independent of $k$. We have also picked $\d_k$ so that $\d_k^{\a/2}\|D\phi_k\|_0\to0$ (cf. \ref{dk2}), hence for $\d_k$ small enough: If
$x'\in B^n_{2\d_k}(x_0')$  and  $|u_k(x')-\phi_k(x')|<2\dist(x',\partial\W_k)$, then $(x',u_k(x'))\in B_{\d_k^{1/2}}^{n+1}(x_0)$.

By the uniform $C^{1,\a}$ estimates for $\graph u_k$ (cf. Theorem \ref{theta is bounded below}, Remark \ref{rmk on 1a regularity}), there exists $\r_0$ such that for all $k$
\begin{equation*}\label{ro}
\|\nu_k(x)-\nu_k(y)\|\le |x-y|^\a,\,\,\forall\,x,y\in\graph u_k: |x-y|\le \r_0
\tag{(2)}
\end{equation*}
where $\nu_k$ is the downward unit normal to $\graph u_k$.

Let $K=1 +\sup_k\|D\phi_k\|_0$. We will show that \ref{refgrad} holds for all $\d_k$ small enough so that $\d_k^{1/2}<\r_0$ and
\begin{equation*}\label{dktec}
\d_k^{\a/2}<\frac{1}{2\sqrt{1+4K^2}}\Leftrightarrow 4\d_k^\a+16(\d_k^{\a/2}K)^2<1.
\tag{(3)}
\end{equation*}
We will consider two different cases:

{\it Case 1:} $|Du_k(x'_0)|\le 2K$. Then for any $x=(x',u(x'))\in B^{n+1}_{\d_k^{1/2}}(x_0)$ we have, by \ref{ro}, \ref{dktec}:
\begin{align*}
\frac{1}{\sqrt{1+|Du_k(x')|^2}}>\frac{1}{\sqrt{1+|Du_k(x'_0)|^2}}-\d_k^{\a/2}\Rightarrow |Du(x')|\le 4(1+K)
\end{align*}
in which case \ref{refgrad} holds.

{\it Case 2:} $|Du(x'_0)|> 2K$. Then, by \ref{ro}, \ref{dktec}, for any $x=(x',u(x'))\in B^{n+1}_{\d_k^{1/2}}(x_0)$ we have that
\begin{align*}
\frac{1}{\sqrt{1+|Du_k(x')|^2}}<\frac{1}{\sqrt{1+|Du_k(x'_0)|^2}}+\d_k^{1/2}\Rightarrow |Du(x')|>1+K.
\end{align*}
Hence in this case 
\[ (B^{n+1}_{\d_k^{1/2}}(x_0)\cap\graph u_k\cap(B^n_{2\d_k}(x_0')\times\R))\setminus\Phi_k\subset\{(x',x_{n+1}):|x_{n+1}-\phi_k(x')|\ge2d(x)\}\]
and so \ref{refgrad} is trivially true.
\end{proof}

\subsection*{Existence of solution to the approximating problems \ref{approx prob}} \label{existence of solutions section}

Lemma \ref{global gradient bounds lemma} and standard applications of the De Giorgi, Nash, Moser theory give the following $C^{1,\a}$ estimates \cite[Theorem 13.2]{GT}.
\begin{cor}\label{global C1a bound lemma}
Let $u_k\in C^{1,\a}(\ov{\W}_k)$ be a solution of \emph{\ref{approx prob}} then
\[\|u_k\|_{1,\a,\ov{\W}_k}\le C\]
where $C$ depends on $\d_k$, $\sup_{\W_k} |H_k|+|DH_k|$, $\|\phi_k\|_2$ and $\sup_{\partial\W_k} |H_{\partial\W_k}|+|DH_{\partial\W_k}|$.
\end{cor}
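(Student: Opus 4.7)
The plan is to combine the global gradient bound of Lemma \ref{global gradient bounds lemma} with standard interior and boundary Hölder gradient estimates for quasilinear divergence-form equations, essentially as packaged in \cite[Thm.\ 13.2]{GT}.

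First, by Lemma \ref{global gradient bounds lemma}, we have a bound $\sup_{\ov\W_k}|Du_k|\le M$, where $M$ depends only on the quantities listed in the statement of the corollary (via $\|\phi_k\|_2$, $\|H_{\partial\W_k}\|_1$, $\|H_k\|_1$, and $\delta_k$ through $\|Du_k\|_{0,\partial\W_k}$, cf.\ Lemma \ref{boundary gradient estimates lemma}). Once $|Du_k|$ is uniformly bounded, the coefficients $a^{ij}(p)=\delta_{ij}/\sqrt{1+|p|^2}-p_ip_j/(1+|p|^2)^{3/2}$ that appear in the mean curvature operator, when evaluated at $Du_k$, give a uniformly elliptic matrix with ellipticity constants depending only on $M$. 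Thus the equation in \ref{approx prob} is of the form
\[
\sum_{i,j=1}^n D_i\!\left(a^{ij}(Du_k)D_ju_k\right)=\sum_{i=1}^nD_if^i_k+H_k\quad\text{in }\W_k,
\]
which we may rewrite as a linear divergence-form equation $D_i(A^{ij}_k(x')D_ju_k)=D_iF^i_k+G_k$ with bounded, measurable $A^{ij}_k$ (uniformly elliptic) and with $F^i_k$, $G_k$ bounded in terms of the constants in the statement.

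Second, to obtain Hölder continuity of $Du_k$ up to the boundary, one argues in the standard way: away from $\partial\W_k$ one applies the interior De Giorgi–Nash–Moser estimate to the difference quotients (or directly to $D_\ell u_k$, which solves a linear uniformly elliptic equation with $C^{0,\alpha}$-type data once we observe that $f_k\in C^{0,\alpha}$ on the strip $\W_k^{\delta_k/2}\times\R$ and is smooth elsewhere away from $\graph\phi_k$). Near $\partial\W_k$, the boundary is $C^{1,\alpha}$ (in fact $C^\infty$ for the approximations) and $\phi_k$ is smooth, so after flattening $\partial\W_k$ and subtracting $\phi_k$, one applies the boundary De Giorgi–Nash–Moser / Morrey-type estimates to obtain $C^{0,\alpha}$ control of $Du_k$ up to $\partial\W_k$. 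This is exactly the content of \cite[Thm.\ 13.2]{GT}, applied with the quasilinear structure of \ref{approx prob} and with $f_k$, $H_k$ playing the role of the lower-order inhomogeneity.

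The only subtlety is that the constant will necessarily depend on $\delta_k$, since both the Hölder norm of $f_k$ and the boundary gradient bound (Lemma \ref{boundary gradient estimates lemma}) blow up as $\delta_k\to 0$; this matches the statement of the corollary, and no uniformity in $k$ is claimed or needed here. The uniform-in-$k$ regularity for $\graph u_k$ as a submanifold has already been established in Theorem \ref{theta is bounded below} and Remark \ref{rmk on 1a regularity}; the present corollary is only the $k$-dependent a priori estimate required to set up the Leray–Schauder existence argument in the next subsection. Consequently, the proof is a short citation of \cite[Thm.\ 13.2]{GT} after invoking Lemma \ref{global gradient bounds lemma}, with no genuine obstacle.
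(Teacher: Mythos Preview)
Your proposal is correct and follows exactly the paper's approach: the paper's entire proof is the sentence preceding the corollary, which says that Lemma~\ref{global gradient bounds lemma} together with standard De Giorgi--Nash--Moser theory (specifically \cite[Theorem 13.2]{GT}) yields the estimate. Your write-up simply unpacks that citation, and your observation about the $\delta_k$-dependence is accurate and consistent with the role this corollary plays in the Leray--Schauder argument.
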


We can now prove the existence of a solution to the problem \ref{approx prob}.

\begin{theorem}\label{existence lemma}
The Dirichlet problem \emph{\ref{approx prob}} has a solution in $C^{1,\a}(\ov{\W}_{k})$.
\end{theorem}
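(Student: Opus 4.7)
The plan is to apply the Leray-Schauder fixed point theorem in the form standard for quasilinear divergence-form equations, using the a priori $C^{1,\a}$ estimate of Corollary \ref{global C1a bound lemma} as the essential input. Consider the continuous family of Dirichlet problems, parameterized by $t \in [0,1]$:
\begin{equation*}
\sum_{i=1}^n D_i\!\left(\frac{D_i u}{\sqrt{1+|Du|^2}}\right) = tH_k + t\sum_{i=1}^n D_i f_k^i \text{ in } \W_k, \quad u = t\phi_k \text{ on } \partial\W_k.
\end{equation*}
At $t=0$ the unique solution is $u\equiv 0$; at $t=1$ we recover \ref{approx prob}.

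Fix $\beta \in (0,\a)$ and define the nonlinear map $\mathcal{T}:C^{1,\beta}(\ov{\W}_k)\times[0,1]\to C^{1,\beta}(\ov{\W}_k)$ by letting $u = \mathcal{T}(v,t)$ be the unique weak solution of the associated linear problem in which the leading coefficients are frozen at $Dv$ and the lower-order $x_{n+1}$-dependence is evaluated at $v$. Linear divergence-form theory and Schauder regularity (using $f_k \in C^{0,\a}$) show that $\mathcal{T}$ is well-defined, continuous, and compact, that $\mathcal{T}(\cdot,0) \equiv 0$, and that fixed points of $\mathcal{T}(\cdot,t)$ are precisely the weak solutions of the $t$-problem above.

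The main task is then to establish a uniform bound $\|u\|_{1,\beta} \le M$ for all fixed points of $\mathcal{T}(\cdot,t)$, with $M$ independent of $t \in [0,1]$. For this I would simply rerun the chain of arguments in Remark \ref{supbound}, Lemma \ref{boundary gradient estimates lemma}, Lemma \ref{global gradient bounds lemma}, and Corollary \ref{global C1a bound lemma} with the data $(H_k, f_k, \phi_k)$ replaced by $(tH_k, tf_k, t\phi_k)$. All quantitative inputs are monotone in $t$: the sup estimate of Remark \ref{supbound} is preserved since $\|tH_k\|_0 \le \|H_k\|_0$ and $t\sum_i D_i f_k^i$ satisfies the same integral inequality against test functions; the boundary barriers of Lemma \ref{boundary gradient estimates lemma} still function since the barrier differential inequalities are maintained when the right-hand side is scaled by $t \in [0,1]$; and the global gradient bound of Lemma \ref{global gradient bounds lemma} rests on the graphical $C^{1,\a}$ regularity of Theorem \ref{theta is bounded below}, whose proof, via Theorems \ref{close-to-plane thm} and \ref{graph uniformly close to a plane}, depends only on $\|H\|_0$ and $[f]_\a$, both of which decrease under the $t$-scaling.

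The main obstacle will be this uniform-in-$t$ version of the a priori estimate, which requires re-examining each step of Section \ref{section the DP with RD} and of the construction of the approximating problems to confirm that no argument used the specific value $t=1$. Once this is verified, Leray-Schauder produces a fixed point $u_k$ of $\mathcal{T}(\cdot,1)$, and since the linear problem solved at $v = u_k$ has coefficients in $C^{0,\beta}$ and right-hand side in $L^\infty$ plus the divergence of a $C^{0,\a}$ vector field, a final Schauder bootstrap upgrades the regularity of $u_k$ from $C^{1,\beta}$ to $C^{1,\a}(\ov{\W}_k)$, as required.
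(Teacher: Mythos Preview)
Your overall strategy via Leray--Schauder is the same as the paper's, but the specific homotopy $(tH_k,\,t f_k,\,t\phi_k)$ does not work, and the gap is exactly at the barrier step you flag as ``still functioning''. The vector field $f_k$ was built so that $\sum_i D_i f_k^i=\pm\dvg\eta_k$ in the regions $U^{\pm}$ defined by $|x_{n+1}-\phi_k(x')|\ge\dist(x',\partial\W_k)$; these regions are anchored at the \emph{original} $\phi_k$, not at $t\phi_k$. The proof of Lemma~\ref{boundary gradient estimates lemma} needs the upper barrier $\psi^+$ to satisfy $\psi^+(x'_0)=t\phi_k(x'_0)$ at a boundary point while having its graph lie in $U^+$, so that on the set $\{u_t\ge\psi^+\}$ the equation reduces to $\sum_i D_i\bigl(D_iu_t/\sqrt{1+|Du_t|^2}\bigr)=t\dvg\eta_k$. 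But if $\phi_k(x'_0)>0$ and $t<1$, the point $(x'_0,t\phi_k(x'_0))$ lies below $U^+$ (possibly in $U^-$, or in the transition zone where $\sum_i D_i f_k^i$ is not controlled), and the comparison argument collapses. Multiplying $f_k$ by $t$ scales its values but does not move its geometric transition layer to sit around $\graph t\phi_k$, which is what the barriers require. The same mismatch reappears in Lemma~\ref{global gradient bounds lemma}, whose reduction to a neighborhood of $\graph\phi_k$ uses that the prescribed mean curvature is smooth with nonnegative $x_{n+1}$-derivative precisely outside that neighborhood.

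The paper resolves this by \emph{reconstructing} the divergence term along the homotopy: it uses $f_{k,\sigma}$ built from the recipe for $f_k$ but with $\sigma\phi_k$ in place of $\phi_k$ and with the layer width $\d_k$ replaced by $s(\sigma)\d_k$ for a suitable $s:[0,1]\to[0,1]$. This realigns $U^{\pm}$ with $\graph\sigma\phi_k$ so Lemmas~\ref{boundary gradient estimates lemma} and~\ref{global gradient bounds lemma} apply verbatim. The price is that the boundary-barrier constant now depends on $(s(\sigma)\d_k)^{-1}\|u_\sigma\|_0$, so a \emph{refined} sup estimate $\|u_\sigma\|_0\le C\,s(\sigma)$ is needed; the paper obtains it by a test-function argument and the choice $s(\sigma)=\sigma^{1/2}$. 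This extra step is the genuine content you are missing.
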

\begin{proof}
Let $p\ge n/(1-\a)$. We define the family of operators
$$T_\s:C^{1,\a}(\ov{\W}_k) \rightarrow W^{2,p}(\W_k)\,,\,\s\in[0,1]$$
such that for any $v \in C^{1,\a}(\ov{\W}_k)$, $u=T_\s v$ is defined to be the solution of the
linear Dirichlet problem
\begin{equation*}\label{linear eqn}\begin{split}
\sum_{i,j=1}^na_{ij}(x',Dv) D_{ij}u&= g_\s(x',v) \text{ in }\W_k \\u&=\s \phi_k \text{ on }
\partial \W_k\end{split}
\tag{(1)}
\end{equation*}
where
\begin{equation*}
a_{ij}(x',p)=\frac{\delta_{ij}}{\sqrt{1+|p|^2}}-\frac{p_ip_j}{\left(\sqrt{
1+|p|^2}\right)^3}
\end{equation*}
and
\[g_\s(x',x_{n+1})=\s H_k(x',x_{n+1})+\sum_{i=1}^n D_i {f}^i_{k,\s}(x',x_{n+1})\]
where the vector field ${f}_{k,\s}$ is constructed in the same way as $f_k$ was constructed in the beginning of this section (cf. construction under equation \ref{approx prob}), but with boundary data $\s\phi_k$ instead of $\phi_k$ and $\d_k$ replaced by $s(\s)\d_k$, where $s:[0,1]\to [0,1]$ is a continuous increasing function such that $s(1)=1$ and $s(0)=0$. Note that then $[{f}_{k,\s}]_{\a}\le c[\eta_k]_{\a}$, where $\eta_k$ is the inward pointing unit normal to $\partial\W_k\times\R$.

{\textit{Claim:}}
For all $\s\in[0,1]$, $T_\s$ is well defined, compact and continuous.

	$a_{ij}\in C^0(\ov{\W}_k)$ and $g_\s(x', v(x'))$ is bounded and thus in $L^p(\W_k)$. Hence, there exists a solution $T_\s v=u\in W^{2,p}(\W_k)$ of \ref{linear eqn} \cite[Theorem 9.18]{GT} and by the Calderon-Zygmund inequality, for this solution we have that
\begin{equation*}\label{W2p estimate}
\|u\|_{W^{2,p}}\le C (\|\phi_k\|_{W^{2,p}}+\|g_\s\|_p).
\tag{(2)}
\end{equation*}
Therefore $T_\s$ is well defined.

Assume now that $\{v_i\}$ is a sequence of functions in $C^{1,\a}({\ov{\W}_k})$, such that for some $K>0$
\[\| v_i\|_{1,\a,\ov{\W}_k}\le K\,,\,\forall i.\]
Then, by the Arzela-Ascoli theorem, after passing to a subsequence, $v_i\to v\in C^{1,\a}(\ov{\W}_k)$, where the convergence is with respect to the $C^{1,\a'}$ norm, for all $\a'<\a$.

Let $u_i=T_\s v_i$. Then, because of \ref{W2p estimate} and by the Sobolev embedding theorem \cite[Theorem 7.26]{GT}, after passing to a subsequence, $u_i\to u$ with respect to the $C^{1,\a'}$ norm for all $\a'<\a$ and where $u\in W^{2,p}(\W_k)$. Furthermore $u$ is a
solution to the equation
$$a_{ij}(x',Dv) D_{ij}u=g_\s(x',v) \text{ in }\W_k $$
and since $u_i=\s\phi_k$ on $\partial \W_k$ for all $ i$, we have that $u=\s\phi_k$ on $\partial \W_k$.
Hence
$$T_\s v=u$$ 
and thus $T_\s$ is compact and continuous.

If $u_\s$ is a fixed point of the operator $T_\sigma$, then $u_\s$ is a solution of the following Dirichlet problem
\begin{equation*}\label{sigma prob}\begin{split}\sum_{i=1}^nD_i\left(\frac{D_iu_\s}{\sqrt{1+|Du_\s|^2}}\right)&=\sum_{i=1}^n D_i{f}^i_{k,\s}(x',u_\s)+\s H_k(x',u_\s) \text{ in }\W_k\\
 u&=\sigma\phi_k\text{ on }\partial\W_k.\end{split}\tag{(3)}
 \end{equation*}
 
 Since $0$ is the unique solution for $T_0(0)$ and a fixed point of $T_1$ corresponds to a solution of the problem \ref{approx prob}, the Leray-Schauder theory implies that \ref{approx prob} is solvable in $C^{1,\a}(\ov{\W}_k)$ if there exists a constant $C$ such that
\begin{equation*}\label{unif}
\|u_\s\|_{1,\a}\le C\,\,,\,\,\forall\s\in[0,1]
\tag{(4)}
\end{equation*}
where for each $\s$, $u_\s\in C^{1,\a}(\ov{\W}_k)$ is a solution to \ref{sigma prob}. 

Since $\|u_\s\|_0$ are uniformly bounded (cf. Remark \ref{supbound}) and because of standard PDE estimates \cite[Theorem 13.2]{GT}, for proving \ref{unif}, it suffices to show that
\begin{equation*}\label{sind}
\|Du_\s\|_{0}\le C \,\,,\,\,\forall \s\in[0,1].
\tag{(5)}
\end{equation*}

Note first that the results of this section concerning the regularity of the approximating graphs of the solutions to \ref{approx prob} (Theorems \ref{graph uniformly close to a plane}, \ref{theta is bounded below}) are applicable for the family of problems given in \ref{sigma prob} for $\s\in[0,1]$. Hence Theorem \ref{theta is bounded below} implies uniform (independent of $\s$) $C^{1,\a}$ estimates for the graphs of the solutions $u_\s$ as manifolds. Furthermore for each $u_\s$, Lemma \ref{boundary gradient estimates lemma} implies a boundary gradient estimate $\|Du_\s\|_{0,\partial\W_k}\le C_\s$, possibly depending on $\s$ and consequently we can apply Lemma \ref{global gradient bounds lemma} to get a global gradient estimate for each $u_\s$ (depending on $C_\s$). 

We will show that by choosing the function $s(\s)$ appropriately, $C_\s$ is in fact independent of $\s$, which would imply \ref{sind}. 

By the construction of the barriers in Lemma \ref{boundary gradient estimates lemma} (cf. estimate \ref{barest} in proof of Lemma \ref{boundary gradient estimates lemma}, \cite[Chapter 14]{GT}) we note that it suffices to show that $s(\s)^{-1}\|u_\s\|_{0}$ are uniformly bounded. The following sup estimate shows that this can be achieved as long as we take $s=s(\s)=\s^{1/2}$.

Let $v=(u_\s-l)_+$, where $l=\sup_{\partial\W_k}\|\s\phi_k\|_0$. For some $\e_1,\e_2\in(0,1)$, let also
\[\W_1=(\W_k\setminus\W_k^{s\d_k})\cap\{x:|Du_\s(x)|>\e_1\}\,\,,\,\, \W_2=(\W_k\setminus\W_k^{s\d_k})\cap\{x:|Du_\s(x)|\le\e_1\}\] 
and
\[\W^s_1=\W^{s\d_k}_k\cap\{x:|Du_\s(x)|>\e_2\}\,\,,\,\, \W^s_2=\W_k^{s\d_k}\cap\{x:|Du_\s(x)|\le\e_2\}.\] 
By using $v$ as a test function in the weak form of the equation \ref{sigma prob} we have:
\[\begin{split}\frac{\e_1}{\sqrt{1+\e_1^2}}\int_{\W_2}|Dv|d\H^n&+\frac{1}{\sqrt{1+\e_1^2}}\int_{\W_2}|Dv|^2d\H^n\\
&+\frac{\e_2}{\sqrt{1+\e_2^2}}\int_{\W^s_2}|Dv|d\H^n+\frac{1}{\sqrt{1+\e_2^2}}\int_{\W^s_1}|Dv|^2d\H^n
\le\\
C\s\bigg(\int_{\W_2}|Dv|d\H^n&+\int_{\W_1}|Dv|^2d\H^n+ 1\bigg)\\&+Cs^{1/2}\left(\int_{\W^s_2}|Dv|d\H^n+\int_{\W^s_1}|Dv|^2d\H^n+ s\right)\end{split}\]
where $C$ is a constant depending only on $\|H_k\|_0, |\W_k|$ (cf. Remark \ref{supbound}). So for $\e_1=\s^{1/2}$, $\e_2=4Cs^{1/2}$ (where $C$ is as in the above estimate) and for $s,\s$ small enough and we get the following:
\[\s^{1/2}\int_{\W_2}|Dv|d\H^n+s^{1/2}\int_{\W^s_2}|Dv|d\H^n\le C(\s+s^{1+1/2})\Rightarrow\]
\[\s^{1/2}\int_{\W_2\cup\W_1}|Dv|d\H^n+s^{1/2}\int_{\W^s_2\cup\W_1^s}|Dv|d\H^n\le C(\s+s^{1+1/2})+C(\s+s^2)\Rightarrow\]
\[\int_{\W_k}|Dv|d\H^n\le C(\s^{1/2}+\s^{-1/2}s^{1+1/2})\]
and by the Sobolev inequality
\[\left(\int_{\W_k}|v|^\frac{n}{n-1}d\H^n\right)^\frac{n-1}{n}\le C\s^{-1/2}(\s+s^{1+1/2}).\]
This implies a sup estimate (cf. \cite[Section 10.5] {GT}):
\[\|v\|_0\le C s\]
where $C$ is independent of $s,\s$, provided that $\s^{1/2}\le s$.

\end{proof}

\section{Main Theorem and applications}\label{main theorem chapter}
As in Section \ref{dirichlet problem section}, we let $\W$ be a $C^{1,\a}$ bounded domain in $\R^n$ and $\Phi$ a compact, embedded $C^{1,\a}$ submanifold of $\partial\W\times\R$. We will use the following notation:

The set $(\partial\Omega\times\R)\setminus\Phi$ is the
union of two disjoint open connected components $U_{\Phi},V_{\Phi}$, where
$U_{\Phi}\supset \{(x',x_{n+1}):x'\in
\partial\Omega,\, x_{n+1}>R\}$ and $V_{\Phi}\supset \{(x',x_{n+1}):x'\in
\partial\Omega,\,x_{n+1}<-R\}$ for sufficiently large $R$. We can think of these components as
the parts of the cylinder $\partial\W\times\R$ that lie ``above" and ``below" $\Phi$
respectively. Then for any
multiplicity one $n$-current $S$ with $\spt S\subset\ov{\W}\times\R$ and $\partial S=[\![\Phi]\!]$, there exists a
multiplicity one $(n+1)$-current, which we will denote by $\widetilde{S}$, such that
\begin{equation}\label{tilde}
S=[\![V_\Phi]\!]+\partial\widetilde{S}\,\,,\,\,\spt\wt{S}\subset\W\times\R.
\end{equation}

We now state our Main Theorem:

\begin{theorem}[Main Theorem]\label{main theorem}
Given $\W$ a $C^{1,\a}$ bounded domain of $\R^n$, $\Phi$ a compact, embedded $C^{1,\a}$ submanifold of $\partial\W\times\R$, such that for a sequence $\phi_i\in C^{1,\a}(\partial{\W})$, $\graph \phi_i\toa\Phi$ and $H=H(x',x_{n+1})$ a $C^1$ function in $\ov{\W}\times\R$, which is non decreasing in the $x_{n+1}$-variable and such that $\|H\|_0\le n \o_n^{1/n}|\W|^{-1/n}$, we let $u$ be a function in $BV(\Omega)$ that minimizes the functional
\[\mathcal{F}(u)=\int_\Omega\sqrt{1+|Du|^2}dx'+\int_\Omega\int_0^{u(x)}H(x',x_{n+1})dx'dx_{n+1}+
  \lim_i\int_{\partial
  \Omega}|u-\phi_i|dx'.\]

Then for the current $T=[\![\graph u]\!]+Q$, where $Q$ is the multiplicity one
$n$-current such that $\spt Q \subset
\partial\Omega\times{\R}$ and $\partial
Q=[\![\Phi]\!]-[\![\trace u]\!]$, $\spt T$ is a $C^{1,\alpha}$ manifold-with-boundary, with boundary given by $\Phi$.

Moreover this current $T=T(\W,\Phi, H)$ locally minimizes the functional
\begin{equation*}\label{minimizing property of T}
\underline{\underline{M}}(T)+\int_{{\spt}\widetilde{T}}H(x',x_{n+1})dx'dx_{n+1}\tag{(1)}
\end{equation*}
among all $n$-currents with boundary $[\![\Phi]\!]$ and support in $\ov{\W}\times\R$.
\end{theorem}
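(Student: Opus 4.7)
I would realize the current $T$ as a $C^{1,\a}$ limit of graph currents $T_k = [\![\graph u_k]\!]$ arising from the regularized Dirichlet problems constructed in Section \ref{dirichlet problem section}. Theorem \ref{existence lemma} yields $u_k \in C^{1,\a}(\ov{\W}_k)$ solving each approximating problem; Remark \ref{supbound} supplies a uniform bound $\|u_k\|_0 \le M$; and Theorem \ref{theta is bounded below} together with Remark \ref{rmk on 1a regularity} provide uniform $\a$-H\"{o}lder control on the downward unit normals $\nu_k$ to $\graph u_k$, with all constants independent of $k$.

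These uniform $C^{1,\a}$ estimates mean the submanifolds-with-boundary $\graph u_k \subset \ov{\W}\times[-M,M]$ (with $\partial(\graph u_k)=\graph\phi_k\toa\Phi$) form a compact family in the sense of Definition \ref{c1a convergence}. By Arzela--Ascoli, after passing to a subsequence, $\graph u_k$ converges in $C^{1,\a'}$ for every $\a'<\a$ to a $C^{1,\a}$ manifold-with-boundary $M_\infty \subset \ov{\W}\times\R$ with $\partial M_\infty=\Phi$, and correspondingly $T_k$ converges weakly and in mass to a multiplicity-one integral current $T^{*}$ with $\spt T^{*} = M_\infty$.

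The remaining and main step is to identify $T^{*}$ with the current $T = [\![\graph u]\!]+Q$ built from the $\mathcal{F}$-minimizer $u$. Because $\|u_k\|_0 \le M$ and the graph areas are uniformly bounded, the $u_k$ have uniformly bounded $BV$ norms, so along a further subsequence $u_k \to u$ in $L^1(\W)$ for some $u \in BV(\W)$; the limit manifold $M_\infty$ then decomposes into $\graph u$ together with the piece of $\partial\W\times\R$ between $\trace u$ and $\Phi$, giving $\spt T^{*}=\spt T_u$. Passing Lemma \ref{minimizing property of Tk} to the limit, one obtains that $T^{*}$ locally minimizes the functional \ref{minimizing property of T} among integral $n$-currents in $\ov{\W}\times\R$ with boundary $[\![\Phi]\!]$. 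By the correspondence between $\mathcal{F}$-minimizers in $BV$ and such minimizing currents \cite{FHLinNonparametric}, $u$ is the $\mathcal{F}$-minimizer (unique up to a constant, with the constant pinned down by the approximation), so $T^{*} = T$, establishing both conclusions of the theorem.

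\textbf{Main obstacle:} Passing to the limit in the almost-minimizing inequality of Lemma \ref{minimizing property of Tk}. The error $\r^{\a}[f_k]_{\a}$ does not vanish pointwise --- the vector field $f_k$ is designed precisely to encode the boundary geometry --- yet $f_k$ is supported in a tube of width $\d_k \to 0$ around $\partial\W_k \times \R$. For competitors supported in the interior of $\W\times\R$, the $f_k$-contribution eventually vanishes and one recovers pure local area minimization; for competitors reaching the boundary cylinder, the divergence structure of $f_k$ (Remark \ref{lpoff}) is exactly what produces the $|u-\phi|$ boundary term of $\mathcal{F}$ in the limit. Carefully splitting compact competitors into interior and near-boundary parts, and bookkeeping the $H_k$ and $f_k$ contributions, is the technical heart of the argument.
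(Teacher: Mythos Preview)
Your overall strategy matches the paper's: build the approximating solutions $u_k$ via Theorem~\ref{existence lemma}, invoke the uniform manifold $C^{1,\a}$ estimates of Theorem~\ref{theta is bounded below} and Remark~\ref{rmk on 1a regularity}, apply Arzel\`a--Ascoli to get $\graph u_k\toap M$, and identify $M$ with $\spt T$. Where you diverge is in establishing the minimizing property~\ref{minimizing property of T}.

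You propose to pass Lemma~\ref{minimizing property of Tk} to the limit and then sort out the $f_k$ contributions by splitting competitors into interior and near-boundary pieces. Two issues. First, a factual slip: it is $\sum_i D_i f_k^i$, not $f_k$ itself, that vanishes outside the $\d_k$-tube (cf.\ property (ii) below~\ref{approx prob}); the vector field $f_k$ lives on a neighborhood $V_k$ of fixed size, so $[f_k]_\a$ does not tend to zero. Second, even granting that, Lemma~\ref{minimizing property of Tk} only yields an almost-minimizing inequality with error $O(\r\|H\|_0+\r^\a[f_k]_\a)$; in the limit this gives neither the volume term $\int_{\spt\wt T}H$ of~\ref{minimizing property of T} nor exact minimality. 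Your proposed bookkeeping would ultimately have to reproduce the calibration identity of Lemma~\ref{dvg-lemma} on the limit, at which point the detour through $T_k$ is superfluous.

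The paper sidesteps all of this. From the interior $C^{1,\a'}$ convergence $u_k\to u$ on compact subsets of $\W$ (standard PDE, Remark~\ref{change of basis rmk}) one reads off that the limit $u$ itself solves $\sum_i D_i\bigl(D_i u/\sqrt{1+|Du|^2}\bigr)=H$ in $\W$. Then the calibration form $\omega$ built from the downward normal $\nu$ of $\graph u$ satisfies $T(\omega)=\mass(T)$ (using $\vec T_k\to\vec T$ in $C^{0,\a'}$) and $d\omega=H\,dx$ on $\W\times\R$. For any competitor $S$ with $\partial S=[\![\Phi]\!]$ and $\spt S\subset\ov\W\times\R$, the key observation is that $\spt(\wt T-\wt S)\subset\W\times\R$, so Stokes gives~\ref{minimizing property of T} directly, for \emph{all} competitors at once, with no interior/boundary splitting. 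This is both shorter and avoids the error-management you flag as the main obstacle.
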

\begin{proof}
Without loss of generality we can assume that $0\in\partial\W$. We also let $r>0$ be such that $(\W,\Phi)\in\B^\a_r$ (cf. Definition \ref{ar regular class}).

We use the approximating method described in section \ref{dirichlet problem section} with the given boundary data $(\W,\Phi)$. Let $u_k\in C^1(\ov{\W}_k)$ be the solutions to the approximating problems defined in \ref{approx prob}:
\begin{equation*}\label{napp}\begin{split}\sum_{i=1}^nD_i\left(\frac{D_iu_k}{\sqrt{1+|Du_k|^2}}\right)&=\sum_{i=1}^n D_if^i_k+H_k \text{  in  }\W_k\\
u_k&=\phi_k \text{  on  }\partial\W_k\end{split}
\tag{(2)}
\end{equation*}
where $\W_k\toa \W$ and $\Phi_k|_{\partial\W_k}\toa\Phi$ with $\Phi_k=\graph\phi_k$. Note that the solutions $u_k$ exist by Theorem \ref{existence lemma}.

Then, by Theorem \ref{theta is bounded below}, we have uniform $C^{1,\a}$ estimates for the graphs of $u_{k}$ (independent of $k$). In particular given $\e_0$, there exists $\rho$ such that 
\[\k(\graph u_{k},\r, x_k)<\e_0\,\,,\,\,\forall x_k\in\graph u_k\,\,\text{and}\,\,\forall k\]
where $\k$ is as in Definition \ref{c1a convergence}. 

Assume now that $B^{n+1}_{\r/2}(x)\subset\R^{n+1}$ is such that $B^{n+1}_{\r/2}(x)\cap \graph u_{k}\ne\emptyset$ for infinitely many $k$. Then for these $k$, $B^{n+1}_{\r/2}(x)\cap\graph u_{k}$ is the graph of a $C^{1,\a}$ function, of norm less than $\e_0$, above some $n$-dimensional affine space $P_k$. After passing to a subsequence $P_k\to P$, where $P$ is an $n$-dimensional affine space and hence
\begin{equation*}\label{videf}
\graph u_{k}\cap B^{n+1}_{\r/2}(x)=\graph v_k\cap B_{\r/2}^{n+1}(x)\tag{(3)}
\end{equation*}
where $v_k\in C^{1,\a}(P\cap U_k; P^\perp)$ is such that $\|v_k\|_{1,\a}<2\e_0$
 and
where $U_k$ is a $C^{1,\a}$ domain of $P$. Notice that either $U_k=P$ or else $\Phi_k\cap B_\r(x)\ne\emptyset$ in which case $\partial U_k\cap B^{n+1}_{\r/2}(x)=\proj_P(\Phi_{k}\cap B^{n+1}_{\r/2}(x))$. In the latter case, since $\Phi_{k}\toa\Phi$, we have that $U_k\toa U$, where $U$ is a $C^{1,\a}$ domain of $P$ such that $\partial U\cap B^{n+1}_{\r/2}(x)=\proj_P(\Phi\cap B^{n+1}_{\r/2}(x))$.

Hence we can apply the Arzela-Ascoli theorem to the sequence $\{v_k\}$, to conclude that, after passing to a subsequence,
\begin{equation*}\label{vdef}
v_k\to v
\tag{(4)}
\end{equation*}
with respect to the $C^{1,\a'}$ norm, for any $\a'<\a$, where $v\in C^{1,\a}(P\cap U; P^\perp)$ is such that $\|v\|_{1,\a}\le2\e_0$ and furthermore
\[\graph u_{k}\cap B^{n+1}_\r(x)\toap \graph v\cap B_\r^{n+1}(x).\]

Since $\|u_{k}\|_0$ are uniformly bounded, there exists a compact subset $D\subset\R^{n+1}$ such that $\graph u_{k}\subset D$ for all $k$. Covering $D$ with finitely many balls of radius $\r$ and applying the above discussion in each of them we get that after passing to a subsequence
\begin{equation*}\label{conv}\graph u_{k}\toap M
\tag{(5)}\end{equation*}
for all $\a'<\a$ and where $M$ is an embedded $C^{1,\a}$ manifold-with-boundary $\Phi$ and such that for any $x\in M$, $\k(M,\r,x)<2\e_0$.

For the sequence $\{u_{k}\}$, by standard PDE estimates (as discussed in Remark \ref{PDE remark})
we have uniform $C^{1,\a}$ estimates in compact sets of $\W$ (independent of $k$) and thus, after passing to a subsequence, $\{u_{k}\}$ converges with respect to the $C^{1,\a'}$ norm on compact sets of $\W$, for any $\a'<\a$, to a function $u\in C^{1,\a}(\W)$. Hence we have that $M\cap(\W\times\R)=\graph u$. Furthermore, since $u_{k}$ satisfy the equation in \ref{napp} and $\W_{k}\toa\W$, $u$ satisfies
\begin{equation*}\label{no bdry eqn}
\sum_{i=1}^nD_i\left(\frac{D_iu}{\sqrt{1+|Du|^2}}\right)=H(x',u(x'))\,,\,\text{in }\W.
\tag{(6)}
\end{equation*}

Let $T$ be the multiplicity one $n$-current such that $\spt T=M$. Then $\partial T=[\![\Phi]\!]$ and $T=[\![\graph u]\!]+Q$, where $Q$ is the multiplicity one
$n$-current such that $\spt Q\subset
\partial\Omega\times{\R}$ and $\partial
Q=[\![\Phi]\!]-[\![\trace u]\!]$.

$u$ satisfies equation \ref{no bdry eqn} and therefore $T$ minimizes the functional in \ref{minimizing property of T} of the theorem. To see this, let $W\subset\subset\overline{\Omega}\times\mathbb{R}$ and let $S$ be a multiplicity one $n$-current with boundary $[\![\Phi]\!]$ and such that $\spt T=\spt S$ outside $W$.

Then $T-S=\partial ({\wt{T}-\wt{S}})$, where $\wt T, \wt S$ are as defined at the beginning of this section (cf. equation \ref{tilde}) and note that $\wt{T}-\wt{S}$ has support in $W\cap(\W\times\R)$. Let
\[\omega=\sum_{i=1}^{n+1}(-1)^{i+1}e_i\cdot{\nu} dx_1\wedge\dots\wedge\widehat{dx_i}\wedge\dots\wedge
dx_{n+1}\] 
where $\nu$ is the downward pointing unit normal to $\graph u$ extended to be an
$\R^{n+1}$-valued function in $\W\times\R$ so that it is independent of the
$x_{n+1}$-variable. Then, because of the convergence in \ref{conv}, $\vec{T_{k}}\to\vec{T}$ (where $T_{k}=[\![\graph u_{k}]\!]$) with respect to the $C^{0,\a'}$ norm, for any $\a'<\a$. Hence $T(\o)=\mass(T)$ and arguing as in Lemma \ref{dvg-lemma} we have
\begin{equation*}\label{minarg}\begin{split}T(\o)- S(\o)=&(\wt{T}-\wt{S})(d\o)\Rightarrow \\
\mass(T)- \mass(S)
\le&-\int_{\spt \wt{T}\setminus\spt \wt{S}}H(x',x_{n+1})dxdx_{n+1}\\
&+\int_{\spt \wt{S}\setminus\spt \wt{T}}H(x',x_{n+1})dxdx_{n+1}\end{split}.\tag{(7)}\end{equation*}
Hence $T$ minimizes the functional defined in \ref{minimizing property of T} and so $u$ minimizes $\mathcal{F}$. 
\end{proof}

\begin{rmk}\label{uniqueness rmk}
If $S$ is another $n$-current with boundary $[\![\Phi]\!]$ and support in $\ov{\W}\times\R$ that minimizes the functional defined in \emph{\ref{minimizing property of T}} of \emph{Theorem \ref{main theorem}}, then $\vec{T}=\vec{S}$ almost everywhere. To see this note that by the argument \emph{\ref{minarg}} in the proof of \emph{Theorem \ref{main theorem}} we have that
 \[\mass(T)=\mass(S)=\int_{\spt S}<\vec{S},\vec{T}>d\mu_S.\]
Therefore $\spt S\cap(\W\times\R)=\graph u+c$, for some constant $c$ and hence $u+c$ (as well as $u$) minimizes the functional $\mathcal F$. Hence if $\trace u\cap\Phi\ne\emptyset$ then $S=T$ and $T$ is the unique current with this minimizing property. In particular we know that $u(x)=\phi(x)$ for any $x\in\partial\W$ such that $H_{\partial\W}(x)>|H(x,\phi(x))|$ \cite{MM-lpcdomaind-weakMSE}.
 \end{rmk}

\begin{rmk}\label{closrmk}
The regularity of $\spt T$, where $T=(\W,\Phi,H)$ \emph{(}as defined in \emph{Theorem \ref{main theorem}}\emph{)} depends on that of $\partial\Omega$, $\Phi$ and $H$ in a continuous way and the boundary regularity of $\spt T$ is a local result:

By the proof of \emph{Theorem \ref{main theorem}} we see that $\spt T$ can be approximated in the $C^{1,\a}$ sense \emph{(cf. Definition \ref{c1a convergence})} by a sequence of graphs of solutions $u_k$ to the approximating problems \emph{\ref{approx prob}} \emph{(}as described in \emph{Section} \emph{\ref{dirichlet problem section}}\emph{)}. Furthermore for this sequence we also have that $\|u_k-u\|_{1,\a', W}\to 0$ for all $W\subset\subset \W$, $\a'<\a$ and $\|u_k\|_{1,\a,W}\le C$, with the constant $C$ depending only on $\|H\|_{C^1}$, the $C^1$ norm of $H$. Hence $\|u\|_{1,\a,W}$ and therefore $\sup\{r: \k(\spt T,x,r)<\infty\}$ for any $x\in \spt T\cap (\W\times\R)$ depends only on $\|H\|_{C^1}$.

For points $x=(x',x_{n+1})\in\spt T\cap (\partial\W\times\R)$ we have \emph{(}by \emph{Theorem \ref{theta is bounded below}} and the proof of \emph{Theorem \ref{main theorem})} that  $\sup\{r: \k(\spt T,x,r)<\infty\}$ depends on $\|H\|_{C^1}$ but also on  $\sup\{r: \k(\partial\W,x',r)<\infty\}$ and $\sup\{r: \k(\Phi,x,r)<\infty\,\,,\,\forall x=(x',t)\in\Phi\}$. 
\end{rmk}

In the following corollary, which is an immediate consequence of Theorem \ref{main theorem}, we give some properties for the trace of the function $u$ that minimizes $\mathcal{F}$, as defined in Theorem \ref{main theorem}.
\begin{cor}\label{corollary about trace}
Let $(\W,\Phi)$, $H$, $u$ and $T=T(\W,\Phi,H)$ be as in the statement of \emph{Theorem \ref{main theorem}}.
\begin{enumerate}
\item[\emph{i.}] If $\trace u\cap\Phi\ne 0$, then $\forall\, x=(x',x_{n+1})\in\trace u
  \cap \Phi$ there exists $\rho>0$ such that either $B^{n+1}_\r(x)\cap V_{\Phi}=\emptyset$
  or $B^{n+1}_\r(x)\cap U_\Phi=\emptyset$. The radius $\rho$ depends only on  $\sup\{r: \k(\partial\W,x',r)<\infty\}$, $\sup\{r: \k(\Phi,x,r)<\infty\,\,,\,\forall x=(x',t)\in\Phi\}$ and $\|H\|_{C^1}$ \emph{(}because of \emph{Remark \ref{closrmk})}
  \item[\emph{ii.}]  If for some $x'\in\partial\W$, $x=(x',x_{n+1})\in \trace u\cap U_\Phi$ then
  $\vec{T}(x)$ coincides with the inward pointing unit normal of $\partial\W$ at
  $x'$ and if $x=(x',x_{n+1})\in \trace u\cap V_\Phi$ then
  $\vec{T}(x)$ coincides with the outward pointing unit normal of $\partial\W$ at
  $x'$.
  \end{enumerate}
  $U_\Phi, V_\Phi$ are as defined at the beginning of \emph{Section \ref{main theorem chapter}}.
\end{cor}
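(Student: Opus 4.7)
The plan is to deduce both assertions directly from Theorem \ref{main theorem}, which provides the $C^{1,\a}$ manifold-with-boundary structure of $\spt T$ (with boundary $\Phi$), combined with the one-sided constraint $\spt T\subset\ov\W\times\R$.

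For part (i), I fix $x=(x',x_{n+1})\in\trace u\cap\Phi$. By Theorem \ref{main theorem} and Remark \ref{closrmk}, there is a radius $\r>0$, controlled by the parameters listed, such that $\spt T\cap B^{n+1}_\r(x)$ is a $C^{1,\a}$ graph over its tangent halfspace $P^+\subset T_x\spt T$ at $x$, with $\partial P^+=T_x\Phi\subset T_{x'}\partial\W\times\R$. Two cases arise according to whether $T_x\spt T$ is transverse or tangent to the cylinder. If $T_x\spt T\ne T_{x'}\partial\W\times\R$, then $P^+$ has a non-vanishing component along $\nu_{\partial\W}(x')$; since $\spt T\subset\ov\W\times\R$, this component must point inward, and after shrinking $\r$ the graphical representation forces $\spt T\cap(\partial\W\times\R)\cap B^{n+1}_\r(x)=\Phi\cap B^{n+1}_\r(x)$, so the claim holds trivially. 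If instead $T_x\spt T=T_{x'}\partial\W\times\R$, then $P^+$ lies in the cylinder's tangent plane and extends on exactly one side of $T_x\Phi$; the graphical representation (with $C^{1,\a}$-deviation from $P^+$ bounded by $\e\r$ via Theorem \ref{theta is bounded below}) ensures that $\spt T\cap B^{n+1}_\r(x)$ is disjoint from the opposite side of $\Phi$ inside the cylinder, giving the desired $\spt T\cap B^{n+1}_\r(x)\cap V_\Phi=\emptyset$ or $\spt T\cap B^{n+1}_\r(x)\cap U_\Phi=\emptyset$.

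For part (ii), I fix $x=(x',x_{n+1})\in\trace u\cap U_\Phi$; since $x\notin\Phi$, the Main Theorem gives that $\spt T$ is a $C^{1,\a}$ manifold near $x$ with $x$ an interior point. If $T_x\spt T\ne T_{x'}\partial\W\times\R$, then $\spt T$ would meet $\partial\W\times\R$ transversely, placing part of it outside $\ov\W\times\R$, a contradiction. Hence $T_x\spt T=T_{x'}\partial\W\times\R$, and the unit normal to $\spt T$ at $x$ must be $\pm\nu_{\partial\W}(x')$. To pin down the sign, I invoke the convergence $[\![\graph u_k]\!]\toap\spt T$ established in the proof of Theorem \ref{main theorem} for the solutions $u_k$ of the approximating problems \ref{approx prob}. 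Because $x_{n+1}>\phi(x')$, the functions $u_k$ must pass from boundary value $\phi_k(x')\approx\phi(x')$ up to interior values close to $x_{n+1}$, so near $x'$ the gradient $Du_k$ points inward and $|Du_k|\to\infty$; the downward unit normal $\nu_k=(Du_k,-1)/\sqrt{1+|Du_k|^2}$ of $\graph u_k$ therefore converges to the inward unit normal $\nu_{\partial\W}(x')$, giving the claimed orientation. The case $x\in\trace u\cap V_\Phi$ is entirely symmetric: there $x_{n+1}<\phi(x')$ forces $Du_k$ to point outward, so $\nu_k$ converges to the outward normal.

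The main obstacle is the tangent subcase in part (i): one needs to rule out $\spt T$ straddling $\Phi$ inside the cylinder even when $T_x\spt T$ agrees with $T_x(\partial\W\times\R)$. This is controlled precisely by the scale of $C^{1,\a}$-regularity coming from Theorem \ref{theta is bounded below}, which ensures that the graphical deviation of $\spt T$ from $P^+$ on $B^{n+1}_\r(x)$ is of order $\e\r$ with $\e$ arbitrarily small and thus cannot reach the opposite side of $\Phi$ in $\partial\W\times\R$.
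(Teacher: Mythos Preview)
Your proposal is correct and aligns with the paper's own treatment: the paper gives no proof beyond declaring the corollary an ``immediate consequence'' of Theorem~\ref{main theorem}, and the approach you take---reading both assertions off the $C^{1,\a}$ manifold-with-boundary structure of $\spt T$ together with the constraint $\spt T\subset\ov\W\times\R$---is precisely the intended one.

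One minor point on part~(i): the ``main obstacle'' you flag in the tangent subcase is not actually an obstacle, and the $\e\r$-smallness from Theorem~\ref{theta is bounded below} is unnecessary there. Once $T_x\spt T=T_x(\partial\W\times\R)=:P$, both $\spt T$ and $\partial\W\times\R$ are $C^{1,\a}$ graphs over $P$ near $x$; the domain of the $\spt T$-graph is a half-disk $U\subset P$ with $\partial U=\proj_P(\Phi)$, and the two components $U_\Phi,V_\Phi$ of the cylinder correspond exactly to the two sides of $\partial U$ in $P$. Any point of $\spt T\cap(\partial\W\times\R)$ near $x$ lies over $U$, hence over only one side of $\partial U$, and therefore in only one of $U_\Phi,V_\Phi$. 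This is a purely topological consequence of the graphical representation---no quantitative control on the deviation is needed. Your part~(ii) argument (tangency forced by the one-sided constraint, sign fixed by the $C^{1,\a'}$ convergence $\graph u_k\to\spt T$ and the orientation convention for $\nu_k$) is the natural way to make the paper's ``immediate'' explicit.
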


\subsection*{Higher Regularity}
In this paragraph we show higher regularity for $\spt T$, where $T$ is as in Theorem \ref{main theorem}, provided that we impose some additional regularity conditions on $\Phi,\partial\W$.

\begin{lemma}\label{W2pext}
If in addition to the hypotheses of \emph{Theorem \ref{main theorem}} we assume that $\partial\W$ and $\Phi$ are $W^{2,p}$ submanifolds, for some $p>n$, then for the current $T=T(\W,\Phi,H)$, as defined in \emph{Theorem \ref{main theorem}} we have that $\spt T$ is a $W^{2,p}$ manifold-with-boundary, with boundary given by $\Phi$.
\end{lemma}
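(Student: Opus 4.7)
The plan is to bootstrap from the $C^{1,\alpha}$ regularity already established in Theorem \ref{main theorem} to $W^{2,p}$ regularity by working locally in graphical coordinates and invoking the classical $W^{2,p}$ theory for linear elliptic equations with $C^{0,\alpha}$ coefficients (Gilbarg–Trudinger, Chapter 9).

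First I would fix $x_0\in\spt T$ and choose $\rho>0$ small so that, by Theorem \ref{main theorem}, $\spt T\cap B^{n+1}_\rho(x_0)$ is the graph of some $v\in C^{1,\alpha}((x_0+(P\cap U))\cap B^{n+1}_\rho(x_0);P^\perp)$ over the tangent plane $P=T_{x_0}\spt T$, where $U$ is a $C^{1,\alpha}$ domain of $P$. In these rotated coordinates the minimizing property \ref{minimizing property of T} of $T$ translates into the quasilinear elliptic equation
\[
\sum_{i,j=1}^n a_{ij}(Dv)\, D_{ij}v = g(y',v(y')) \quad\text{in } U\cap B^n_\rho(0),
\]
with $a_{ij}(p)=\delta_{ij}/\sqrt{1+|p|^2}-p_ip_j/(1+|p|^2)^{3/2}$ and $g$ a $C^1$ function obtained from $H$ by the rotation of coordinates; this is derived exactly as in the second half of the proof of Theorem \ref{main theorem} through the $n$-form $\omega$ and the identity $T(\omega)-S(\omega)=(\wt T-\wt S)(d\omega)$. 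Because $v\in C^{1,\alpha}$, the coefficients $a_{ij}(Dv(\cdot))$ lie in $C^{0,\alpha}$ and the equation is uniformly elliptic with continuous coefficients.

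Next I split into two cases according to whether $x_0\in \spt T\setminus\Phi$ or $x_0\in\Phi$. In the interior case, for $\rho$ sufficiently small, $U=P$ and the interior $W^{2,p}$ estimate \emph{(Gilbarg–Trudinger, Theorem~9.11)} applied to the linear equation $\sum a_{ij}(Dv)D_{ij}v=g(\cdot,v)$ gives $v\in W^{2,p}_{\mathrm{loc}}$ with norm controlled by $\|v\|_{1,\alpha}$, $\|a_{ij}(Dv)\|_{0,\alpha}$, $\|g\|_{L^p}$, all of which are finite. In the boundary case $x_0\in\Phi$, the piece of $\partial U$ containing the origin is the image under $\proj_P$ of $\Phi\cap B^{n+1}_\rho(x_0)$, and the boundary values of $v$ on this piece are determined by $\Phi$ itself. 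Since $\Phi$ is $W^{2,p}$ and the projection onto $P$ is smooth with non-degenerate differential (as $P$ is transverse to the normal two-plane to $\Phi$, which is ensured by the $C^{1,\alpha}$ tangent-plane data), both $\partial U$ and the boundary values of $v$ inherit $W^{2,p}$ regularity. The boundary $W^{2,p}$ estimate \emph{(Gilbarg–Trudinger, Theorem~9.13)} then yields $v\in W^{2,p}$ up to the boundary in a neighborhood of the origin.

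Covering $\spt T$ by finitely many such local charts (using compactness of $\ov\W\times[-M,M]$ containing $\spt T$, as in Remark \ref{rmk on 1a regularity}) gives a $W^{2,p}$ atlas for $\spt T$ as a manifold-with-boundary, with boundary $\Phi$. The main obstacle I expect is the boundary case: one must verify that the projection of $\Phi$ onto the tangent plane $P$ of $\spt T$ produces a $W^{2,p}$ hypersurface in $P$ and that the Dirichlet data assigned to $v$ along this hypersurface is $W^{2,p}$. This reduces to showing that the map $\proj_P\circ\iota_\Phi$ (where $\iota_\Phi$ is a $W^{2,p}$ parametrization of $\Phi$) is a $W^{2,p}$ diffeomorphism onto its image, which is a consequence of the chain rule in $W^{2,p}\cap C^{1,\alpha}$ together with the non-tangency of $P$ and the two-dimensional normal space of $\Phi$ at $x_0$, non-tangency being guaranteed because $P$ is tangent to $\spt T$ and $\spt T$ is $C^{1,\alpha}$-transverse to its boundary $\Phi$.
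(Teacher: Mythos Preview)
Your argument has a genuine gap at points $x_0\in\spt T\cap(\partial\W\times\R)\setminus\Phi$, in particular near $\trace u$.  You treat every $x_0\in\spt T\setminus\Phi$ as an ``interior'' point at which the local representing function $v$ satisfies the prescribed mean curvature equation $\sum a_{ij}(Dv)D_{ij}v=g$.  But the minimizing property \ref{minimizing property of T} is taken among currents with support in $\ov\W\times\R$; competitors are not free to cross the cylinder.  When $x_0$ lies on the vertical wall (so $P=T_{x_0}(\partial\W\times\R)$), the local picture is that of an obstacle problem: $v\ge\psi$ where $\graph\psi=\partial\W\times\R$, and on the coincidence set $\{v=\psi\}$ the mean curvature of $\spt T$ equals that of the cylinder, not $H$.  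This is precisely the variational inequality derived later in the proof of Theorem \ref{gerhardthem}; it is not an equation to which Theorems 9.11 or 9.13 of Gilbarg--Trudinger apply.  The same issue arises at points $x_0\in\Phi$ where the tangent plane to $\spt T$ is vertical.

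The paper's proof sidesteps this by returning to the approximating sequence $u_k\in C^{1,\a}(\ov\W_k)$.  Each $\graph u_k$ lies entirely in $\W_k\times\R$, so its local representing functions $v_k$ satisfy a genuine equation, with the divergence term $\sum_i D_if_k^i$ playing the role of the cylinder's curvature near $\partial\W_k$.  One then arranges $\sup_k\|\sum_i D_if_k^i\|_p<\infty$ and $\sup_k\|\phi_k\|_{W^{2,p}}<\infty$ (possible because $\partial\W,\Phi\in W^{2,p}$, via Lemma \ref{match divergence} and Remark \ref{rmk match div}), applies Calder\'on--Zygmund uniformly to the $v_k$, and passes to the limit.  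Your direct route could in principle be completed by invoking $W^{2,p}$ regularity for the obstacle problem (with $W^{2,p}$ obstacle $\psi$ and $W^{2,p}$ Dirichlet data on $\partial U$), but as written the equation you claim for $v$ is not valid at the relevant points.
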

\begin{proof}
Note first that since $\partial\W$, $\Phi$ are $W^{2,p}$, they are also $C^{1,\a}$ for $\a=1-n/p$ and hence (by Theorem \ref{main theorem}) $\spt T$ is $C^{1,\a}$.

Following the proof of Theorem \ref{main theorem} we can construct the vector fields $f^i_k$ in the approximating problems, as defined in \ref{napp} in the proof of Theorem \ref{main theorem}, so that 
\[\sup_{k}\|\sum_{i=1}^n D_i{f}^i_{k}\|_p<\infty \text{  and  } \sup_{k}\|{\phi}_{k}\|_{W^{2,p}}<\infty.\] 
Something which is possible because $\partial\W, \Phi$ are in $W^{2,p}$ (see also Lemma \ref{match divergence}, Remark \ref{rmk match div}).

Let $v_k$ be the local graphical representations of $\graph u_k$, as defined in \ref{videf} in the proof of Theorem \ref{main theorem}:   $v_k\in C^{1,\a}(P\cap U_k; P^\perp)$, for some $n$-dimensional affine space $P$
are such that 
\[\graph u_{k}\cap B^{n+1}_{\r/2}(x)=\graph v_k\cap B_{\r/2}^{n+1}(x).\]
Recall (Remark \ref{change of basis rmk}) that $v_k$ satisfy the following equations
\[\sum_{i=1}^nD_i\left(\frac{D_iv_k}{\sqrt{1+|Dv_k|^2}}\right)=\sum_{i=1}^n {f}^i_{k}+{H}^i_{k}\text{  in  }{U}_k\]
and if $\partial \Phi_k\cap B_{\r/2}^{n+1}(x)\ne\emptyset$ we also have that
\[v_k={\phi}_{k} \text{  on  }\partial U_k\cap B_{\r/2}^{n+1}(x).\]

Applying the Calderon-Zygmund inequality to the solutions $v_k$ and noticing that 
\[\sup_k\|\sum_{i=1}^nD_i {f}^i_{k}+{H}_{k}\|_p<\infty\,\,,\,\,\sup_k\|{\phi}_{k}\|_{W^{2,p}}<\infty\]
we conclude that $\|v_k\|_{W^{2,p}}$ are uniformly bounded. This implies that $v$ is in $W^{2,p}$, where $v$ is as in \ref{vdef} in the proof of Theorem \ref{main theorem} and in particular
\[\|v\|_{W^{2,p}}\le\sup_k\|v_k\|_{W^{2,p}}.\]
Hence $M=\spt T$ is a $W^{2,p}$ submanifold (see \ref{conv} in proof of Theorem \ref{main theorem}).
\end{proof}

Let $\W,\Phi, H, u, T=T(\W,\Phi, H)$ be as in Theorem \ref{main theorem}. Then by standard PDE theory, $\spt T\cap (\W\times\R)$ is a $C^{2}$ manifold. However, near points $x\in\trace u$ the best we can expect is that $\spt T$ is $C^{1,1}$. We will show that this is the case, provided that we impose higher regularity conditions on $\W, \Phi$. In particular we will show that around those points $\spt T$ can be expressed as the graph of a function that satisfies a variational inequality, an observation that for the case $H=0$ and for points $x\in\trace u\setminus\Phi$ was first made in \cite{Lin-Lau}. Thus using regularity results for such functions \cite{BK-variationalinequalities, GerhardtC11} we will show that $\spt T$ is a $C^{1,1}$ manifold-with-boundary provided that $\W$ is a $C^2$ domain and $\Phi$ is a $C^3$ manifold.

\begin{theorem}\label{gerhardthem}
If in addition to the hypotheses of \emph{Theorem \ref{main theorem}}, $\W$ is a $C^2$ domain and $\Phi$ is a $C^3$ submanifold of $\partial\W\times\R$, then for the current $T=T(\W,\Phi,H)$, as defined in \emph{Theorem \ref{main theorem}} we have that $\spt T$ is a $C^{1,1}$ manifold-with-boundary, with boundary given by $\Phi$.
\end{theorem}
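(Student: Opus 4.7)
The plan is to stratify $\spt T$ according to where the point sits, and upgrade regularity locally using the minimizing property \ref{minimizing property of T} of Theorem \ref{main theorem}. Any point $x\in\spt T$ falls into one of three regions: (i) $x\in\graph u\cap(\W\times\R)$ (an interior graph point), (ii) $x\in\Phi$ (a boundary point of $T$), or (iii) $x\in\trace u\setminus\Phi$ (where the vertical part of $\spt T$ meets the graph part along the cylinder $\partial\W\times\R$). Case (i) is immediate: by Theorem \ref{main theorem} we already know $\spt T$ is a $C^{1,\a}$ graph of $u$ and $u$ solves the smooth quasilinear elliptic PMC equation, so Schauder theory gives $u\in C^{2,\a}_{\text{loc}}(\W)$, hence $C^{1,1}$. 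For case (ii) at a point $x\in\Phi$, since $\W$ is $C^2$ and $\Phi$ is $C^3$, I can apply Lemma \ref{W2pext} to conclude $\spt T$ is $W^{2,p}$ for all $p<\infty$, which by Morrey embedding gives $C^{1,1-n/p}$ for any $p$, so essentially we need to refine the argument of Lemma \ref{W2pext}: rewriting $\spt T$ locally as a graph of some $v$ over a tangent plane $P$, with $\partial U\cap B^{n+1}_{\r/2}(x)=\proj_P(\Phi\cap B^{n+1}_{\r/2}(x))$ a $C^3$ boundary and $v$ equal to a $C^3$ boundary datum on $\partial U$, standard boundary Schauder estimates applied to the approximating $v_k$ (as in the proof of Lemma \ref{W2pext}) give uniform $\|v_k\|_{C^{2,\a}}$ bounds, hence $v\in C^{2,\a}\subset C^{1,1}$ near $x$.

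The main obstacle, as expected, is case (iii). The strategy is to realize $\spt T$ locally as the graph of a function satisfying a \emph{variational inequality} over a $C^{1,1}$ obstacle, then invoke \cite{BK-variationalinequalities, GerhardtC11}. Fix $x_0\in\trace u\setminus\Phi$, say $x_0\in \trace u\cap U_\Phi$. By Corollary \ref{corollary about trace}(ii), $\vec T(x_0)$ is the inward unit normal $\nu_{\partial\W}(x_0')$, so the tangent hyperplane $P$ to $\spt T$ at $x_0$ coincides with $T_{x_0'}(\partial\W)\times\R$. By the $C^{1,\a}$ regularity from Theorem \ref{main theorem}, in a small ball $B^{n+1}_\r(x_0)$ the set $\spt T$ is the graph of a $C^{1,\a}$ function $v:P\cap B^{n+1}_\r(x_0)\to P^\perp$, with $v(0)=0$ and $Dv(0)=0$. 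Moreover, the constraint $\spt T\subset\ov\W\times\R$ in the minimizing property \ref{minimizing property of T} translates into the obstacle $v\ge\psi$ on $P\cap B^{n+1}_\r(x_0)$, where $\psi$ is the $C^{2}$ function whose graph is $\partial\W\times\R$ written over $P$ (a $C^{2}$ graph because $\partial\W$ is $C^2$), and where the contact set $\{v=\psi\}$ corresponds exactly to the vertical part of $\spt T$.

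The next step is to verify that $v$ is a minimizer (equivalently, weak solution of a variational inequality) of the functional $\int\sqrt{1+|Dv|^2}\,d\mathcal H^n+\int_0^v\tilde H\,d\mathcal H^{n+1}$ on the convex set $\{w\in H^1:w\ge\psi\}$, where $\tilde H$ is the image of $H$ under the orthogonal change of coordinates sending the standard basis to an orthonormal frame of $P\oplus P^\perp$. This follows by comparing $T$ with the current obtained by replacing the piece $\graph v\cap B^{n+1}_\r(x_0)$ by $\graph w\cap B^{n+1}_\r(x_0)$ for any admissible competitor $w$ and using the minimizing property \ref{minimizing property of T}; the computation is essentially the same as the argument \ref{minarg} in the proof of Theorem \ref{main theorem}. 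Once this is set up, the $C^{1,1}$ regularity of $v$ follows from the regularity theory for variational inequalities of mean-curvature type with smooth obstacle: Brezis--Kinderlehrer \cite{BK-variationalinequalities} and Gerhardt \cite{GerhardtC11} give $v\in C^{1,1}$ whenever the obstacle $\psi$ is $C^{1,1}$ (which holds since $\partial\W$ is $C^2$) and the equation has sufficiently regular coefficients (which is the case here since $\tilde H$ is $C^1$).

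The delicate point, and the one I expect to consume the most verification, is the set-up of the obstacle problem: one must ensure that the constraint $\spt T\subset\ov\W\times\R$ really does translate, after the rotation sending $\Nor_{x_0}\spt T$ to the vertical axis, into a one-sided inequality $v\ge\psi$ for a single smooth $\psi$ on a uniform neighborhood in $P$ (as opposed to some awkward nonsmooth constraint). This is where the hypothesis $\partial\W\in C^2$ is essential: it guarantees that near $x_0'$, $\partial\W$ can be written as a $C^2$ graph over its tangent plane $T_{x_0'}\partial\W$, hence $\partial\W\times\R$ is a $C^2$ graph over $P=T_{x_0'}\partial\W\times\R$; and since $Dv(0)=0=D\psi(0)$, shrinking $\r$ one sees that the two graphs agree at $x_0$ to first order, so the obstacle inequality $v\ge\psi$ indeed captures the constraint locally, completing the argument.
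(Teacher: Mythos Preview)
Your overall strategy---writing $\spt T$ locally as a graph and invoking obstacle-problem regularity at contact points---is the same as the paper's. But the way you stratify the points on $\partial\W\times\R$ is not quite right, and this creates a real gap in your case (ii).

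The paper does not split according to whether $x\in\Phi$ versus $x\in\trace u\setminus\Phi$; it splits according to whether the tangent plane $P=T_x(\spt T)$ equals $T_x(\partial\W\times\R)$ or not. If $P\ne T_x(\partial\W\times\R)$, Corollary \ref{corollary about trace}(ii) forces $x\in\Phi$, and after shrinking $\r$ one has $(\spt T\setminus\Phi)\cap B_\r^{n+1}(x)\subset\W\times\R$; the local graph $v$ then genuinely solves the PMC equation on $U$ with $\Phi$ supplying $C^{2,\a}$ Dirichlet data, and ordinary boundary Schauder gives $v\in C^{2}$ directly---no approximation is needed. If instead $P=T_x(\partial\W\times\R)$, then $x$ may lie in $\trace u\setminus\Phi$ \emph{or} in $\Phi$; in either case $v$ need not solve any equation, but it minimizes the functional $B(v)$ over the convex set $\mathbf K=\{w\ge\psi,\ w=f\text{ on }(x+\partial U)\cap B_\r,\ w=v\text{ on }\partial B_\r\cap(x+U)\}$ and hence satisfies a variational inequality. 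Gerhardt's theorem \cite{GerhardtC11} gives $v\in C^{1,1}$ precisely when the obstacle $\psi$ is $C^2$ and the boundary datum $f$ is $C^3$---and this is exactly, and only, where the hypothesis $\Phi\in C^3$ is used.

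Your case (ii) misses the second scenario: a point $x\in\Phi$ at which $\spt T$ is tangent to the cylinder. There the vertical piece $\spt Q$ is nontrivial arbitrarily close to $x$, so the local graph $v$ coincides with $\psi$ on a set having $x$ in its closure and does \emph{not} solve the PMC equation throughout $U$; boundary Schauder for the PMC Dirichlet problem therefore does not apply. Your proposed workaround via the approximating $v_k$ also fails: the right-hand side $H_k+\sum_i D_i f_k^i$ of the approximate equation is not uniformly $C^{0,\a}$ near $\Phi_k$ (it is only bounded, cf.\ Remark \ref{lpoff}), so Schauder cannot yield uniform $C^{2,\a}$ bounds on $v_k$. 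In fact the limit $v$ is in general only $C^{1,1}$ at such points, not $C^{2,\a}$, since its second derivatives jump across the free boundary of the contact set $\{v=\psi\}$. The fix is exactly what the paper does: run your case-(iii) obstacle argument also at these $\Phi$-points, now with the boundary condition $w=f$ on $\partial U$ included in $\mathbf K$, and invoke Gerhardt's result (which needs $f\in C^3$, hence $\Phi\in C^3$). With that adjustment your proof lines up with the paper's.
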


\begin{proof}

Let $x=(x', x_{n+1})\in \trace u$. By Theorem \ref{main theorem} there exists $\r>0$ such that $B^{n+1}_\r(x)\cap\spt T$ can be represented as the graph of a $C^{1,\a}$ function above $P=T_x(\spt T)$, the tangent space of $\spt T$ at $x$, i.e.
\begin{equation*}\label{grrep}
B^{n+1}_\r(x)\cap\spt T=\graph v\cap B^{n+1}_{\r}(x)
\tag{(1)}
\end{equation*}
where $v\in C^{1,\a}((x+(P\cap U))\cap B^{n+1}_\r(x);P^\perp)$ is a $C^{1,\a}$ function and $U$ a $C^{1,\a}$ domain of $P$.

Assume that $P\ne T_x(\partial\W\times\R)$, the tangent space of $\partial\W\times\R$ at $x$. Then, because of Corollary \ref{corollary about trace} (ii), we have that $x\in \Phi$ and after replacing $\r$ with a smaller radius if necessary we have that
\[(B^{n+1}_\r(x)\cap\spt T)\setminus\Phi\subset\W\times\R\]
since $\spt T$ is $C^{1,a}$.
Hence $v$ satisfies 
\[\sum_{i=1}^n D_i\left(\frac{D_iv}{\sqrt{1+|Dv|^2}}\right)={H} \text{  in  }{U}\]
(cf. Remark \ref{change of basis rmk}). Also for $\partial U$ we have that $\graph{v}|_{x+\partial U}\cap B^{n+1}_\r(x)=\Phi\cap B^{n+1}_\r(x)$. Hence standard PDE estimates imply that $v\in C^{2}((x+(P\cap\ov{U}))\cap B^{n+1}_\r(x);P^\perp)$, provided that $\Phi$ is $C^{2,\a}$.

Hence we will assume that $P= T_x(\partial\W\times\R)$. In this case $v$ doesn't satisfy a prescribed mean curvature equation. However we will show that due to the minimizing property of $T$ (Theorem \ref{main theorem}), it satisfies a variational inequality. 

Note that we can also represent $(\partial\W\times\R)\cap B^{n+1}_\r(x)$ as the graph of a $C^{1,\a}$ function above $P=T_x(\partial\W\times\R)$, i.e. 
\[(\partial\W\times\R)\cap B^{n+1}_\r(x)=\graph\psi\cap B^{n+1}_\r(x)\]
for some $\psi\in C^{1,\a}((x+P)\cap B^{n+1}_\r(x);P^\perp)$.

In case $\Phi\cap B^{n+1}_\r(x)\ne\emptyset$, we define $f:(x+\partial U)\cap B^{n+1}_\r(x)\to P^\perp$ to be the restriction of $v$ on $\partial U$, so that
\[\Phi\cap B^{n+1}_\r(x)=\graph f\cap B_\r^{n+1}(x).\]

We also define the following set
\[\begin{split}\mathbf{K}=\bigg\{w\in C^{0,1}((x+(P\cap U))\cap B^{n+1}_\r(x);P^\perp):& w\ge \psi\,,\\ w=f\text{  on  }(x+\partial U)\cap B^{n+1}_\r(x)\,,&\,w=v\text{  on  }\partial B^{n+1}_\r(x)\cap(x+ U)\bigg\}.\end{split}\]

Then because of the minimizing property of $T$, $v$ minimizes the functional
\[B(v)=\int_{(x+U)\cap B^{n+1}_\r(x)}\sqrt{1+|Dv|^2}d\H^n+\int_{(x+U)\cap B^{n+1}_\r(x)}\int_{\psi(x')}^{v(x')}{H}(x',x_{n+1})dx_{n+1}dx'\]
among all functions in $\mathbf{K}$, where as usual for $x'\in\W$, $y'\in U$ we identify $(x', u(x'))$ with $(y', v(y'))$ using \ref{grrep}, here $u$ is such that $\spt T\cap (\W\times\R)=\graph u$.

Since $\mathbf{K}$ is a convex set, for any $w\in\mathbf{K}$, the function $\l:[0,1]\to B(v+\l(w-v))$, attains its minimum when $\l=0$, therefore 
\[\frac{d}{d\l}\bigg|_{\l=0} B(v+\l(w-v))\ge 0\Rightarrow\]
\[\int_{U\cap B^{n+1}_\r(x)}
\sum_{i=1}^n\frac{D_iv}{\sqrt{1+|Dv|^2}} D_i(w-v)+{H}(x',v(x'))(w-v) dx'\ge0\]
and hence $v$ satisfies the variational inequality
\begin{equation*}\label{varineq}
<Av+{H}v,w-v>\ge0\,\,\,,\,\,\,\,\forall\,w\in\mathbf{K}
\tag{(2)}
\end{equation*}
where
\[Av=-\sum_{i=1}^nD_i\left(\frac{D_iv}{\sqrt{1+|Dv|^2}}\right) \,\,\,\text{  and  }\,\,\, {H}v= {H}(x',v(x')).\]

Therefore by a theorem of Gerhardt \cite{GerhardtC11}, if $\psi$ is of class $C^2$ and $f$ is of class $C^3$ then $v$ is a $C^{1,1}$ function.

We remark here that in case $(x+U)\cap B^{n+1}_\r(x)=(x+P)\cap B^{n+1}_\r(x)$ (i.e. if $x\in\trace u\setminus\Phi$) then $v$ satisfies \ref{varineq}, but with the set $\mathbf{K}$ defined by
\[\mathbf{K}=\{w\in C^{0,1}((x+P)\cap B^{n+1}_\r(x);P^\perp): w\ge \psi\,,\,w=v\text{  on  }\partial B^{n+1}_\r(x)\}.\]
In this case, as was first shown in \cite{Lin-Lau}, we can also derive that $v$ is a $C^{1,1}$ function provided that $\psi$ is of class $C^2$
by a result in \cite{BK-variationalinequalities}.

\end{proof}
Finally we state a result about the regularity of the trace. It is known \cite{LS-bdry-regularity-onnonparametric-MS, FHLinNonparametric} that above a $C^4$ portion of $\partial\W$ where $H_{\partial\W}(x)<|H(x,\phi(x))|$, the trace of $u$ is a Lipschitz manifold. In the following Theorem we show that because of Theorem \ref{gerhardthem}, we can apply a result of Caffarelli \cite{Cafftraceresults} to show that it is actually $C^1$.

\begin{theorem}\
In addition to the hypotheses of \emph{Theorem \ref{main theorem}}, assume that $\W$ is a $C^4$ domain and $\Phi$ is a $C^3$ submanifold of $\partial\W\times\R$. Let $S=\{x'\in\partial\W: H_{\partial\W}(x')<|H(x',x_{n+1})|\,,\,\forall\,(x',x_{n+1})\in\Phi\}$. Then the trace of $u$ above $S$ is $C^1$, where $u\in \BV(\W)$ is the minimizer of $\mathcal{F}$, as in \emph{Theorem \ref{main theorem}}. 
\end{theorem}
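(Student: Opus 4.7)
The plan is to combine Theorem \ref{gerhardthem}, which provides $C^{1,1}$ regularity of $\spt T$ near points of the trace, with a classical free-boundary regularity result of Caffarelli for obstacle-type variational inequalities. Fix $x_0'\in S$ and set $x_0=(x_0',u(x_0'))\in\trace u$. If $x_0\in\Phi$, then locally $\trace u$ agrees with $\graph\phi$, which is $C^{1,\a}$ by hypothesis, so nothing to prove. Assume therefore that $x_0\in\trace u\setminus\Phi$. By Corollary \ref{corollary about trace}(i), after possibly exchanging the roles of ``above'' and ``below,'' a neighborhood of $x_0$ in $\spt T$ is disjoint from $V_\Phi$, so that $\spt T$ contains the vertical segment joining $(x_0',\phi(x_0'))$ to $x_0$; and by Corollary \ref{corollary about trace}(ii), $T_{x_0}\spt T=T_{x_0'}\partial\W\times\R$, i.e.\ the tangent plane to $\spt T$ at $x_0$ is ``vertical'' and coincides with the tangent plane of the cylinder $\partial\W\times\R$.

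Since $\W$ is $C^4$ and $\Phi$ is $C^3$, Theorem \ref{gerhardthem} applies and gives a radius $\rho>0$ and a $C^{1,1}$ function $v$ defined on $(x_0+P)\cap B^{n+1}_\rho(x_0)$ (where $P=T_{x_0}\spt T$) with $\spt T\cap B^{n+1}_\rho(x_0)=\graph v$. The cylinder $\partial\W\times\R$ is itself the graph over $P$ of a $C^3$ function $\psi$ (independent of the vertical coordinate). As in the proof of Theorem \ref{gerhardthem}, one has $v\ge\psi$, $v=\psi$ precisely on the projection of $\spt T\cap(\partial\W\times\R)$, and $v$ satisfies the variational inequality $\langle Av+Hv,w-v\rangle\ge 0$ for all admissible $w\ge\psi$, where $A$ is the minimal-surface operator; in particular $Av=H(\cdot,v)$ pointwise on the non-coincidence set $\{v>\psi\}$. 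Under the projection $P\to\partial\W$, the free boundary $\partial\{v>\psi\}$ maps to the set $\{x'\in\partial\W:u(x')=\phi(x')\}\cap (\text{nbhd of }x_0')$, whose complement is exactly the piece of $\trace u\setminus\Phi$ we wish to understand; so $C^1$ regularity of the free boundary translates directly into $C^1$ regularity of the trace.

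Next I verify the non-degeneracy condition required by Caffarelli's theorem. For the difference $w=v-\psi\ge 0$, a standard linearization of the minimal-surface operator at $\psi$ gives a uniformly elliptic equation of the form $L w=H(\cdot,v)-A\psi+E$ on $\{w>0\}$, where $L$ has $C^{0,\a}$ coefficients (since $v\in C^{1,1}$) and $E$ collects $C^{0,1}$ terms that vanish with $w$. Because $\partial\W\times\R$ is a right cylinder, $A\psi$ equals (up to sign determined by which side of $\Phi$ we are on) the mean curvature $H_{\partial\W}$ of $\partial\W$. The defining inequality of $S$, $H_{\partial\W}(x_0')<|H(x_0',x_{n+1})|$ for every $(x_0',x_{n+1})\in\Phi$, together with the monotonicity of $H$ in the last variable and the fact that $(x_0',u(x_0'))$ lies in $U_\Phi$ (or $V_\Phi$) with the appropriate sign, then yields
\[
H(x_0',u(x_0'))-A\psi(x_0)\neq 0,
\]
with a definite sign that persists in a neighborhood of $x_0$. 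This is the non-degeneracy hypothesis of Caffarelli's regularity theorem for the obstacle problem \cite{Cafftraceresults}; since $\psi\in C^3$, that theorem concludes that the free boundary $\partial\{v>\psi\}$ is a $C^1$ submanifold near the projection of $x_0$. Projecting back to $\partial\W$ proves that $\trace u$ is $C^1$ near $x_0'$, and since $x_0'\in S$ was arbitrary, the theorem follows.

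The main obstacle is the third paragraph: the minimal-surface operator is nonlinear, so one must justify that after linearizing about $\psi$ (equivalently, comparing $v$ and $\psi$ through a suitable smooth interpolation allowed by the $C^{1,1}$ bound on $v$ and the $C^3$ bound on $\psi$) the resulting linear obstacle problem falls into the precise class treated in \cite{Cafftraceresults}, and that the sign of $H-A\psi$ produced by $x_0'\in S$ is the correct one. Once the non-degeneracy is in place, the passage from free-boundary regularity to $C^1$ regularity of the trace is just the projection $P\to\partial\W$, which is a diffeomorphism on the relevant piece since $P=T_{x_0'}\partial\W\times\R$ and $\partial\W\in C^4$.
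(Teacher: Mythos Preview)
Your overall architecture is right --- reduce to an obstacle problem via Theorem~\ref{gerhardthem} and then invoke a Caffarelli-type free boundary result --- and your non-degeneracy computation (that $A\psi=\pm H_{\partial\Omega}$ while the right-hand side is $H$, so the condition defining $S$ forces a strict sign) is a genuine observation. However, two points separate your argument from a complete proof.

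First, your identification of the free boundary is off. With $v$ graphing $\spt T$ and $\psi$ graphing $\partial\Omega\times\R$ over $P$, the coincidence set $\{v=\psi\}$ corresponds to the ``wall'' piece $\spt Q\subset\partial\Omega\times\R$ of $\spt T$, and the non-coincidence set $\{v>\psi\}$ to $\graph u$. The free boundary $\partial\{v>\psi\}$ is therefore the local image of $\trace u$ itself, not of $\{x':u(x')=\phi(x')\}$. This is more than cosmetic: the object whose regularity you want is exactly the free boundary, not its complement.

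Second, and more importantly, you omit the step that actually drives the paper's proof: the a~priori \emph{Lipschitz} regularity of $\trace u$ over $S$, furnished by \cite{LS-bdry-regularity-onnonparametric-MS, FHLinNonparametric}. This is precisely what the $C^4$ hypothesis on $\Omega$ is for (your route would only require $C^3$, which should make you suspicious). Caffarelli's result in \cite{Cafftraceresults}, as used here, upgrades a Lipschitz free boundary to $C^1$ once the solution $\ov v=v-\psi$ is $C^{1,1}$; non-degeneracy alone, without that Lipschitz input, does not rule out singular free-boundary points and so does not yield $C^1$ everywhere. The paper's proof is accordingly very short: set $\ov v=v-\psi$, note $F(\ov v)=\trace u\cap B^{n+1}_\rho(x)$, quote Lipschitz regularity from \cite{LS-bdry-regularity-onnonparametric-MS, FHLinNonparametric}, quote $C^{1,1}$ from Theorem~\ref{gerhardthem}, and apply \cite{Cafftraceresults}.
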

\begin{proof}
Following the notation in the proof of Theorem \ref{gerhardthem}, we introduce the function $\ov{v}=v-\psi$ and define the set
\[F(\ov{v})=\partial\{y\in U\cap B^{n+1}_\r(x):\ov{v}(y)>0\}\cap\partial\{x\in U\cap B^{n+1}_\r(x):\ov{v}(y)=0\}.\]
Then $F(\ov{v})=\trace u\cap B^{n+1}_\r(x)$.
By \cite{LS-bdry-regularity-onnonparametric-MS, FHLinNonparametric}, $F(\ov{v})$ is a Lipschitz manifold. If we furthermore know that $\ov{v}$ is a $C^{1,1}$ function then we can apply the results in \cite{Cafftraceresults} to conclude that $F(\ov{v})$ is $C^1$.
This completes the proof, since if $\partial\W,\Phi\in C^3$, then by Theorem \ref{gerhardthem} $\ov{v}\in C^{1,1}$.
\end{proof}

\subsection*{ Corollaries, Applications}

We show that for $u\in BV(\W)$ minimizing the functional $\mathcal{F}$ (as in Theorem \ref{main theorem}), the trace of $u$ changes monotonely if we change the boundary data monotonely:

\begin{lemma}\label{T tilde is monotone}
Let $H,\W,\Phi_j, T_j=T(\W,\Phi_j,H)$, for $j=1,2$, be as in \emph{Theorem \ref{main theorem}} and such that $V_{\Phi_1}\subset V_{\Phi_2}$. Then
\[\spt\wt{T}_1\subset\spt\wt{T}_2\]
where $V_{\Phi_j}, \wt{T}_j$, $j=1,2$ are as defined at the beginning of section \emph{\ref{main theorem chapter}}, \emph{(cf. \ref{tilde})}.
\end{lemma}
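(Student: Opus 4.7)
The plan is to run a cut-and-paste argument at the level of the auxiliary $(n+1)$-currents $\widetilde T_j$, combined with the minimizing property of $T_j$ from Theorem \ref{main theorem}. Writing $U_j = \spt \widetilde T_j \subset \W \times \R$, I would introduce the measurable sets $A = U_1 \cap U_2$ and $B = U_1 \cup U_2$; the pointwise identity $\chi_A + \chi_B = \chi_{U_1} + \chi_{U_2}$ gives $[\![A]\!] + [\![B]\!] = \widetilde T_1 + \widetilde T_2$ as $(n+1)$-currents, and therefore $\partial [\![A]\!] + \partial [\![B]\!] = (T_1 - [\![V_{\Phi_1}]\!]) + (T_2 - [\![V_{\Phi_2}]\!])$. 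This suggests the competitor $n$-currents $S_1 = [\![V_{\Phi_1}]\!] + \partial [\![A]\!]$ and $S_2 = [\![V_{\Phi_2}]\!] + \partial [\![B]\!]$. Straightforward checks show that $\partial S_j = [\![\Phi_j]\!]$ and $\spt S_j \subset \ov\W \times \R$, so each $S_j$ is admissible against $T_j$ in Theorem \ref{main theorem}, and uniqueness of the decomposition \ref{tilde} identifies $\widetilde{S_1} = [\![A]\!]$ and $\widetilde{S_2} = [\![B]\!]$.

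Next, I would apply the minimizing property of $T_j$ against $S_j$, which gives
\[
\mass(T_j) + \int_{U_j} H \, dx \le \mass(S_j) + \int_{\spt \widetilde{S_j}} H \, dx \quad (j = 1, 2).
\]
Summing and using the elementary identity $\int_A H + \int_B H = \int_{U_1} H + \int_{U_2} H$ (which follows from $\chi_A + \chi_B = \chi_{U_1} + \chi_{U_2}$) yields $\mass(T_1) + \mass(T_2) \le \mass(S_1) + \mass(S_2)$. The opposite inequality $\mass(S_1) + \mass(S_2) \le \mass(T_1) + \mass(T_2)$ I would derive from the classical submodularity of perimeter for Caccioppoli sets: representing each $T_j$, modulo the fixed piece $[\![V_{\Phi_j}]\!]$ on $\partial \W \times \R$, as the relative reduced boundary of $U_j$ inside $\W \times \R$, the standard BV inequality $\mathcal P(U_1 \cap U_2) + \mathcal P(U_1 \cup U_2) \le \mathcal P(U_1) + \mathcal P(U_2)$ translates into the desired mass bound once the boundary contributions on $\partial \W \times \R$ are bookkept using $V_{\Phi_1} \subset V_{\Phi_2}$. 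Working in a compact region of $\R^{n+1}$ (justified because $V_{\Phi_1}$ and $V_{\Phi_2}$ agree for $x_{n+1}$ sufficiently negative) removes the mild discomfort of the $V_{\Phi_j}$ being unbounded.

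Equality in both mass inequalities propagates back to equality in each individual minimization: $\mathcal E(T_j) = \mathcal E(S_j)$ for $\mathcal E = \mass + \int H$. Hence $S_1$ is itself a minimizing current with boundary $[\![\Phi_1]\!]$ and support in $\ov\W \times \R$. By Remark \ref{uniqueness rmk}, $\vec{S_1} = \vec{T_1}$ almost everywhere, so $\spt S_1 \cap (\W \times \R)$ differs from $\spt T_1 \cap (\W \times \R)$ by at most a vertical translation. Since $A = \spt \widetilde{S_1} \subset U_1 = \spt \widetilde T_1$ by construction, such a translation would have to be non-positive; combining $\mass(T_1) + \int_{U_1} H = \mass(S_1) + \int_A H$ with translation-invariance of the mass and the monotonicity of $H$ in $x_{n+1}$ forces the translation to be zero, so $A = U_1$, which is precisely $\spt \widetilde T_1 \subset \spt \widetilde T_2$. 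The main obstacle in this plan is the reverse mass inequality — the submodularity of perimeter must be tracked carefully in the presence of the non-compact pieces $[\![V_{\Phi_j}]\!]$ — together with the final step, where the uniqueness-up-to-translation from Remark \ref{uniqueness rmk} must be promoted to strict uniqueness by exploiting the precise way the monotonicity of $H$ interacts with the minimization equalities already in hand.
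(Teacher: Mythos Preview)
Your plan is a genuinely different route from the paper's. The paper does not argue variationally at all: it goes back to the smooth approximating problems of Section~\ref{dirichlet problem section}, arranges the approximations for $T_1$ and $T_2$ to be solutions of the \emph{same} equation over the same $\W_k$ but with ordered boundary values $\phi_k^1\le\phi_k^2$ on $\partial\W_k$ (which is possible because $V_{\Phi_1}\subset V_{\Phi_2}$), invokes the PDE comparison principle to get $u_k^1\le u_k^2$ on $\W_k$, and lets $k\to\infty$. The whole lemma is three lines once the approximation machinery of Theorem~\ref{main theorem} is in place. What this buys is that the comparison happens at the level of classical $C^{1,\alpha}(\ov\W_k)$ solutions that actually attain their boundary data, where the maximum principle is unproblematic; your approach is more intrinsic (it would not need the approximation apparatus) but pays for this by having to handle the non-uniqueness issue directly.

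That non-uniqueness issue is where your plan has a real gap. After you reach $\mathcal E(T_1)=\mathcal E(S_1)$, Remark~\ref{uniqueness rmk} only yields $\spt S_1\cap(\W\times\R)=\graph(u_1+c)$ for some constant $c\le 0$, and your mechanism for forcing $c=0$ is the monotonicity of $H$ in $x_{n+1}$. But $H$ is merely non-decreasing: when $H$ is independent of $x_{n+1}$ (in particular $H\equiv 0$, the minimal-surface case) a vertical translation of the interior graph changes neither the interior area nor $\int H$, so the equality $\mass(T_1)+\int_{U_1}H=\mass(S_1)+\int_A H$ carries no information about $c$. In that regime your argument does not close; ruling out $\min(u_1,u_2)=u_1+c$ with $c<0$ (equivalently $u_2\equiv u_1+c<u_1$) is itself a comparison statement of the type you are trying to prove. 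Note also that $T_j=T(\W,\Phi_j,H)$ in the lemma refers to the \emph{specific} current produced by the approximation in Theorem~\ref{main theorem}, not to an arbitrary minimizer, so the paper's proof is allowed to exploit the approximation while a purely variational argument would have to confront the possible non-uniqueness of minimizers head-on.
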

\begin{proof} For $j=1,2$, we approximate $\spt T_j$ by graphs of solutions $u_k^j$ to the approximating problems  defined in \ref{approx prob} (as in \ref{napp} in the proof of Theorem \ref{main theorem}). Note that we can take $u_k^j$, for $j=1,2$, to be solutions to the same equation 
\[\sum_{i=1}^nD_i\left(\frac{D_iu^j_k}{\sqrt{1+|Du^j_k|^2}}\right)=\sum_{i=1}^nD_if^i_k+H_k \text{  in  } \W_k\]
and their boundary values to satisfy
\[\phi_k^1(x')\le\phi_k^2(x')\,,\,\forall x'\in\partial\W_k.\]

The above equation satisfies the comparison principle and hence $u_k^1(x')\le u_k^2(x')$ for all $x'\in\W_k$. The lemma follows by letting $k\to \infty$.
\end{proof}

Finally we want to show (Theorem \ref{large class with discontinuities}) that in case $n=2$, for a large class of boundary data $\W,\Phi$ the trace of the minimizer of the functional $\mathcal{F}$ with
$H$ equal to a non-negative constant has a jump discontinuity at a point where the mean curvature of $\partial\W$ is less than $-H$ and along this discontinuity it attaches to the prescribed boundary in a subset with non-empty interior (relative to the boundary manifold). For this we will need the following lemma.
\begin{lemma} \label{discontinuities thm}
Let $\W,\Phi,H$ be as in \emph{Theorem \ref{main theorem}}. Suppose that
$\{\Phi_{t}\}_{t\in [0,1]}$ is a continuous (as map from
$[0,1]$ into the space of $C^{1,\a}$ manifolds) $1$-parameter family, where for each $t\in[0,1]$, $\Phi_t$ is the limit of $C^{1,\a}$ graphs above $\partial\W$ and such that it satisfies the following: 

There exists $x_0'\in\partial\W$ and $\s>0$ such that:
\begin{enumerate}
\item[{i.}] $\Phi_{t}\cap (B^{n}_{\sigma}(x'_{0})\times\R)=\Phi\cap (B^{n}_{\sigma}(x'_{0})\times\R)$ for all $t\in
[0,1]$.
\item[{ii.}] $\{(x',\trace u_{0}(x')):x'\in\partial\Omega\cap B^{n}_\s(x'_0)\}\subset
V_{\Phi}$.
\item[{iii.}] $\{(x',\trace u_{1}(x')):x'\in\partial\Omega\cap B^{n}_\s(x'_0)\}\subset
U_{\Phi}$.
\end{enumerate}  
Here for each $t\in[0,1]$, $u_t\in BV(\W)$ is a minimizing function of the functional $\mathcal{F}$ with given data $(\W,\Phi_t,H)$.

Then for each $x_1\in \Phi\cap (B^{n}_{\sigma}(x'_{0})\times\R)$ there exists 
$t=t(x_1)\in (0,1)$ and $\e>0$ with $\{(x',\trace u_{t}(x')):x'\in\partial\Omega\}\cap
B^{n+1}_{\e}(x_1)=\Phi\cap B^{n+1}_{\e}(x_1)$.
\end{lemma}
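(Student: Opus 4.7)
The plan is to run an intermediate-value argument on the oriented tangent plane $\vec{T}_t(x_1)$ of the minimizing current $T_t=T(\Omega,\Phi_t,H)$ at the fixed point $x_1$. Hypothesis~(i) guarantees that $\Phi_t\equiv\Phi$ inside $B^n_\sigma(x_0')\times\R$ for every $t$, so Theorem~\ref{main theorem} and Remark~\ref{closrmk} provide a $C^{1,\alpha}$ bound on $\spt T_t$ in a fixed neighbourhood $B^{n+1}_{\rho_0}(x_1)$ that is uniform in~$t$. Combining this uniform regularity with the approximation scheme of Section~\ref{dirichlet problem section}, performed jointly in $(k,t)$ using the postulated continuity of $t\mapsto\Phi_t$, together with the uniqueness clause of Remark~\ref{uniqueness rmk} (which applies as soon as the trace touches~$\Phi$), Arzel\`a--Ascoli yields continuity of $t\mapsto\spt T_t$ in the $C^{1,\alpha'}$ topology on $B^{n+1}_{\rho_0}(x_1)$ for every $\alpha'<\alpha$. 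In particular the oriented tangent plane $\vec{T}_t(x_1)$ depends continuously on $t\in[0,1]$.

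Next I would isolate a trichotomy at $x_1$. Since $\spt T_t$ is a $C^{1,\alpha}$ manifold-with-boundary $\Phi$ near $x_1$, its tangent $n$-plane at $x_1$ must contain $T_{x_1}\Phi$, so as an unoriented plane it either coincides with $T_{x_1}(\partial\Omega\times\R)$ or is transverse to $\partial\Omega\times\R$. The decomposition $T_t=[\![\graph u_t]\!]+Q_t$ combined with Corollary~\ref{corollary about trace} then forces one of three local pictures at $x_1$: (I) the tangent plane is the cylinder plane and the attachment region $\spt Q_t$ accumulates on the $V_\Phi$-side of $\Phi$, so $\trace u_t<\phi$ on a neighbourhood of $x_1'$; (II) the tangent plane is the cylinder plane but the attachment region lies on the $U_\Phi$-side, so $\trace u_t>\phi$ on a neighbourhood; or (III) the tangent plane is transverse to $\partial\Omega\times\R$, no attachment region is present near $x_1$, and hence $\trace u_t=\phi$ on a full neighbourhood of $x_1'$ in $\partial\Omega$. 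Case~(III) is precisely the conclusion we want.

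The key observation is that $\vec{T}_t(x_1)$ takes opposite values in cases~(I) and~(II). Indeed, the relation $\partial Q_t=[\![\Phi]\!]-[\![\trace u_t]\!]$ forces $Q_t$ to carry one orientation of the cylinder when $\trace u_t\in V_\Phi$ and the reversed orientation when $\trace u_t\in U_\Phi$, so the cylinder-tangent bivectors attained by $\vec{T}_t(x_1)$ in the two cases are antipodal. By hypotheses~(ii) and~(iii) we are in case~(I) at $t=0$ and in case~(II) at $t=1$. The continuous path $t\mapsto\vec{T}_t(x_1)$ in the Grassmannian of oriented $n$-planes containing $T_{x_1}\Phi$ therefore joins two antipodal cylinder-tangent planes, so at some $t_*\in(0,1)$ it must leave the two-point set formed by these antipodes; at this $t_*$ we are in case~(III), which produces the required $\epsilon>0$ with
\[
\{(x',\trace u_{t_*}(x')):x'\in\partial\Omega\}\cap B^{n+1}_\epsilon(x_1)=\Phi\cap B^{n+1}_\epsilon(x_1).
\]

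The main obstacle I expect is the rigorous justification of the continuity of $t\mapsto\spt T_t$. Because the minimizer of $\mathcal{F}$ is only determined up to a vertical translation in the absence of contact between the trace and~$\Phi$ (Remark~\ref{uniqueness rmk}), one must choose the family $\{T_t\}$ coherently through the approximating construction of Theorem~\ref{main theorem}, so that every Arzel\`a--Ascoli cluster point of $\spt T_{t_k}$ as $t_k\to t_0$ is forced to equal $\spt T_{t_0}$ rather than a nontrivial vertical translate of it. Once this compatibility is in place, the trichotomy and the orientation-reversal step close the intermediate-value argument above.
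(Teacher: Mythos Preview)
Your approach is essentially the same as the paper's: both run an intermediate-value argument on the oriented tangent direction $\vec{T}_t(x_1)$, using the trichotomy you describe and the fact that the two cylinder-tangent orientations are attained at $t=0$ and $t=1$ respectively. The paper, however, avoids establishing full continuity of $t\mapsto\vec{T}_t(x_1)$---the obstacle you rightly flag. Instead it introduces
\[
t_1=\sup\{t:x_1\in U_t\},\qquad t_2=\inf\{t:x_1\in V_t\},
\]
where $U_t,V_t$ are the components of $(\partial\Omega\times\R)\setminus\trace u_t$, and then only uses sequential compactness plus the uniqueness clause of Remark~\ref{uniqueness rmk} along sequences $t_i\uparrow t_1$ and $t_i\downarrow t_2$ to pin down $\vec{T}_{t_1}(x_1)$ and $\vec{T}_{t_2}(x_1)$ as the two opposite cylinder normals. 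This forces $t_1\neq t_2$, and for any $t$ strictly between them the point $x_1$ lies in neither $U_t$ nor $V_t$, which together with Corollary~\ref{corollary about trace} yields case~(III). So the paper's sup/inf device replaces your global continuity step with a softer sequential argument; your route would work but requires the coherent selection of $\{T_t\}$ you describe, which the paper sidesteps.
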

\begin{proof}
 For each $t$, $(\partial\W\times\R)\setminus \trace u_t$ is the
  union of two disjoint connected components $U_t$, $V_t$, where $U_t\supset \{(x',x_{n+1}):x'\in
\partial\Omega,x_{n+1}>R\}$ and $V_{t}\supset \{(x',x_{n+1}):x'\in
\partial\Omega,x_{n+1}<-R\}$ for sufficiently large $R$. Given $x_1\in \Phi\cap (B^{n}_{\sigma}(x'_{0})\times\R)$, let
\[t_1=\sup\{t\in[0,1]:x_1\in U_t\}\]
and
\[t_2=\inf\{t\in[0,1]:x_1\in V_t\}.\]

Note that by the assumptions ii, iii and because of Remark \ref{uniqueness rmk}, $t_1, t_2\in (0,1)$. Take any sequence
$t_i\downarrow t_2$. Then by ii of Corollary \ref{corollary about trace},
$\vec{T}(\Omega,\Phi_{t_i})(x_1)$ coincides with the outward pointing normal of $\partial
\Omega\times\R$ at $x_1$ for all $t_i$ in the sequence and therefore, by Remark \ref{uniqueness rmk}, it is also true for $t_2$. On the other hand if we take a sequence $t_i\uparrow t_1$, then similarly we get that
$\vec{T}(\Omega,\Phi_{t_1})(x_1)$ coincides with the inward pointing normal of $\partial
\Omega\times\R$ at $x_1$.

Hence, again by Remark \ref{uniqueness rmk}, for some $t$ between $t_1$ and $t_2$, $\vec{T}(\Omega,\Phi_{t})(x_1)$ is not parallel to the normal vector to $\partial\W\times\R$ at $x_1$ and so for this $t$, Corollary \ref{corollary about trace} implies that $\{(x',\trace u_{t}(x')):x'\in\partial\Omega\}\cap
B^{n+1}_{\e}(x_1)=\Phi\cap B^{n+1}_{\e}(x_1)$ for some $\e>0$. \end{proof}

\begin{theorem}\label{large class with discontinuities}
Let $\W$ be a bounded $C^{2}$ domain of $\R^2$, $H\ge 0$ a given constant and $x'_{0}\in \partial\Omega$ is such that
\begin{equation*}\label{cond}
H_{\partial\W}(x'_0)<-H
\tag{(1)}
\end{equation*}
 where $H_{\partial\W}(x_0')$ denotes the mean curvature of $\partial\W$ with respect to the inward pointing unit
normal. 

Then there exists a large
class of $C^{1,\a}$ boundary data $\Phi$, for which the function $u$ that minimizes
the functional $\mathcal{F}$ with given data $\{\W,\Phi,H\}$ has trace with a jump discontinuity at $x'_0$ along which it attaches to $\Phi$ in a subset with non-empty interior.
\end{theorem}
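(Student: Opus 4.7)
The strategy is to apply Lemma \ref{discontinuities thm} to a continuous 1-parameter family of vertically shifted boundary data, and then use the hypothesis $H_{\partial\W}(x'_0) < -H$ to force a jump at $x'_0$. By continuity of the curvature, fix $\sigma > 0$ with $H_{\partial\W} < -H$ on $\partial\W \cap B^n_{2\sigma}(x'_0)$. For a base $\phi \in C^{1,\alpha}(\partial\W)$ (to be restricted below), let $\chi$ be a smooth cutoff on $\partial\W$ vanishing on $\partial\W \cap B^n_\sigma(x'_0)$ and equal to $1$ outside $B^n_{2\sigma}(x'_0)$, and set $\phi_t = \phi + c(t)\chi$ for a continuous, monotone increasing $c:[0,1]\to\R$ with $c(0) = -M$ and $c(1) = M$ for a large constant $M$ to be specified. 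Put $\Phi_t = \graph \phi_t$. Each $\Phi_t$ coincides with $\Phi := \Phi_{1/2}$ on $B^n_\sigma(x'_0) \times \R$, so hypothesis (i) of Lemma \ref{discontinuities thm} holds.

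To verify (ii) and (iii), note that $\phi_t$ is monotone in $t$, and hence by Lemma \ref{T tilde is monotone} so is $\trace u_t$. Standard comparison with upper and lower barriers constructed from minimizers over a slightly shrunken subdomain shows that for $M$ sufficiently large, $\trace u_0 < \phi$ and $\trace u_1 > \phi$ strictly on $\partial\W \cap B^n_\sigma(x'_0)$, placing the trace graphs in $V_\Phi$ and $U_\Phi$ respectively. Lemma \ref{discontinuities thm} now yields, for each $x_1 \in \Phi \cap (B^n_\sigma(x'_0) \times \R)$ with $\proj_{\R^n} x_1 \neq x'_0$, a parameter $t^* = t(x_1) \in (0,1)$ and $\epsilon > 0$ with $\trace u_{t^*} = \phi$ on the open subarc $\partial\W \cap B^n_\epsilon(\proj_{\R^n} x_1)$. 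This is the required open subset of non-empty interior on which the trace of $u_{t^*}$ attaches to $\Phi_{t^*}$.

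The main obstacle is to ensure that this attachment does not reach $x'_0$, producing a genuine jump there. To this end, restrict the initial class to $\phi$ with $\phi(x'_0) > B$, where $B = B(\W, H, x'_0)$ is the explicit barrier value at $x'_0$ obtained from the classical non-attainment construction \cite[\S14.4]{GT}: since $H_{\partial\W}(x'_0) < -H$, a spherical cap of mean curvature $-H_{\partial\W}(x'_0) > H$ lying above a small neighborhood of $x'_0$ in $\W$ provides an upper barrier for $u_{t^*}$ at $x'_0$, forcing $\trace u_{t^*}(x'_0) \le B$. Since $B < \phi(x'_0)$, the attachment subarc around $\proj_{\R^n} x_1$ cannot extend to $x'_0$; the one-sided limits of $\trace u_{t^*}$ at $x'_0$ from the two sides therefore differ (equal to $\phi(x'_0)$ from the attachment side, at most $B$ from the other), producing a jump. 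The \emph{large class} asserted by the theorem is thus the open subset $\{\phi \in C^{1,\alpha}(\partial\W) : \phi(x'_0) > B\}$, each element of which, together with a free choice of $x_1$, produces a distinct admissible $\Phi_{t^*}$ realizing the conclusion.
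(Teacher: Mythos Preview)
There is a genuine gap in the final step where you produce the jump at $x'_0$. Your Lemma~\ref{discontinuities thm} output is an attachment set $\Phi\cap B^{n+1}_\epsilon(x_1)$ for some $x_1$ with $\proj_{\R^n}x_1\neq x'_0$; this is a small arc around $\proj_{\R^n}x_1$, not an arc abutting $x'_0$. From this and the barrier bound $\trace u_{t^*}(x'_0)\le B<\phi(x'_0)$ you conclude that the one-sided limit of $\trace u_{t^*}$ at $x'_0$ from ``the attachment side'' equals $\phi(x'_0)$. That does not follow: nothing prevents the trace from detaching from $\phi$ somewhere between $\proj_{\R^n}x_1$ and $x'_0$, remaining continuous at $x'_0$ with a value $\le B$. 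You have shown non-attainment at $x'_0$ and attachment on a nearby open arc, but not a jump at $x'_0$.

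The paper avoids this difficulty by building the jump into $\Phi$ itself rather than trying to force it via barriers. It starts from any $C^{1,\alpha}$ boundary datum $\Phi$ (a limit of graphs, not a graph) that already contains a vertical segment $\{x'_0\}\times I$ over $x'_0$, and then constructs the monotone family $\{\Phi_t\}$ agreeing with $\Phi$ on $B_\sigma(x'_0)\times\R$ and tending to $\pm\infty$ outside. Finn's explicit rotational barriers (over an exterior disk or annulus tangent to $\partial\Omega$ at $x'_0$, using $H_{\partial\Omega}(x'_0)<-H$) give hypotheses (ii) and (iii) of Lemma~\ref{discontinuities thm}. One then applies the lemma at a point $x_1$ \emph{on the vertical segment} $\{x'_0\}\times I$: the resulting attachment $\trace u_{t}=\Phi$ on $B^{n+1}_\epsilon(x_1)$ forces the graph of the trace to contain a vertical piece over $x'_0$, which is precisely a jump discontinuity at $x'_0$ along which the trace coincides with $\Phi$ on an open set. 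Your graphical $\Phi$ cannot produce this mechanism, and your substitute argument for the jump does not close.
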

\begin{proof}
Let $\Phi$ be an embedded $C^{1,\a}$ submanifold of $\partial\W\times\R$ such that for a sequence $\phi_i\in C^{1,\a}(\partial\W)$, $\graph \phi_i\toa \Phi$ and assume that
\[\{x_0'\}\times I\subset\Phi\]
for some interval $I$.

We will show that for any such $\Phi$ there exist $C^{1,\a}$ boundary data $\wt{\Phi}$ such that $\wt\Phi\cap (B_\s(x_0')\times\R)=\Phi\cap (B_\s(x_0')\times\R)$ for some $\s>0$ and for which the conclusion of the theorem holds, i.e. the function $u$ that minimizes
the functional $\mathcal{F}$ with given data $\{\W,\wt\Phi,H\}$ has trace with a jump discontinuity at $x'_0$ along which it attaches to $\wt\Phi$ in a subset with non-empty interior. Here and in the rest of the proof $B_r(x')$ will denote the 2-dimensional ball of radius $r$ centered at $x'$. By Lemma \ref{discontinuities thm}, it suffices to show that for such $\Phi$ we can construct a continuous 1-parameter family of boundary data $\{\Phi_t\}$ satisfying properties i-iii of Lemma \ref{discontinuities thm}.

For $\Phi$ as above and $\s>0$, that will be determined later, let $\{\Phi_t\}_{-\infty<t<\infty}$ be any monotone, continuous, 1-parameter family of boundary data satisfying the following:
\[\Phi_t\cap ( B_\s(x'_0)\times \R)=\Phi\cap( B_\s(x'_0)\times \R)\] 
and outside $B_\s(x'_0)\times \R$, $\Phi_t$ is given by the graph of a $C^{1,\a}$ function $\phi_t$ with
$$\lim_{t\rightarrow -\infty}\phi_t(x')=-\infty\text{  ,  }
\lim_{t\rightarrow +\infty}\phi_t(x')=\infty\quad \forall x'\in
\partial\W\setminus B_\s(x_0').$$
We will show that for $t_0>0$ big enough $\{\Phi_t\}_{\{-t_0\le t\le t_0\}}$ (after a reparametrization) satisfies properties ii and iii of Lemma \ref{discontinuities thm}. In particular we will show that there exists $t_0>0$ and $\s>0$, such that for all $t\ge t_0$
\begin{equation*}\label{firstinc}
 \{(x',\trace u_t(x')):x'\in\partial\W\cap B_\s(x_0')\}\subset U_{\Phi_t}
 \tag{(2)}
 \end{equation*}
 and
 \begin{equation*}\label{secondinc}
  \{(x',\trace u_{-t}(x')):x'\in\partial\W\cap B_\s(x_0')\}\subset V_{\Phi_{-t}}
   \tag{(3)}
\end{equation*}
 where $u_t$ is the minimizer of $\mathcal{F}$ with data $(\W,\Phi_t,H)$.
 
 For any $x'\in\partial\W$ such that $H_{\partial\W}(x')<0$, there exists a circumference $\mathcal C_{x'}$ passing through $x'$, such that a neighborhood of $x'$ in $\partial\W$ lies inside $\mathcal C_{x'}$.  Since $H_{\partial\W}(x'_0)<0$ we can choose $\s>0$ such that the following holds: 
For all $x'\in B_\s(x'_0)$ there exists a circumference $\mathcal C_{x'}$ passing through $x'$ and such that $B_\s(x'_0)\cap\partial\W$ lies inside $\mathcal C_{x'}$.

Let $x=(x',x_3)\in \Phi\cap (B^n_\s(x_0')\times\R)$, $\mathcal C_{x'}$ be as described above and let $\Delta$ be the region defined as follows: If $H=0$ then $\Delta$ is the region of $\R^2$ outside $\mathcal C_{x'}$ and if $H>0$ then $\Delta$ is an annulus with inner boundary $\mathcal C_{x'}$ and width $H^{-1}$. Then (cf. \cite{FinnBarriers}) there exist functions $v^\pm$ defined in $\Delta\cap\W$ such that $x\in\graph v^\pm$,
\[\sum_{i=1,2}D_i\left(\frac{D_iv^\pm}{\sqrt{1+|Dv^\pm|^2}}\right)= \pm H\]
and
 $Dv^\pm=\pm\infty$ on $\partial(\Delta\cap\W)\setminus\partial\W$. Taking $t$ small enough we have that
\[v^+\ge u_t\,\,\text{on}\,\,\partial\W\cap \Delta\]
and taking $t$ big enough we have that
\[v^-\le u_t\,\,\text{on}\,\,\partial\W\cap \Delta.\]

Hence we can apply the comparison principle in \cite{FinnBarriers} to conclude that for $t$ small enough, $v^+\ge u_t$ on $\Delta\cap\W$ and so $x$ lies above the trace of $u_t$. Similarly for $t$ big enough, $v^-\le u_t$ on $\Delta\cap\W$ and so $x$ lies below the trace of $u_t$.
 
\end{proof}

\begin{rmk}
By \emph{Remark \ref{uniqueness rmk}} and \emph{Corollary \ref{corollary about trace}}, \emph{Theorem \ref{large class with discontinuities}} still holds if $\W$ is a $C^{1,\a}$ domain. In this case condition \emph{\ref{cond}} of the theorem should be replaced by the following:
There exists some $r>0$ for which
\[-\int_{\partial\W} \nu_{\partial\W}\cdot D\zeta<\int_{\partial\W}H\zeta\]
for all positive $\zeta\in C^\infty$ with support in $B_r(x_0')\cap\partial\W$.
\end{rmk}

\section{Acknowledgment}
This work was mostly carried out as part of my thesis at Stanford University. I would like to thank my advisor Leon Simon, to whom I am indebted for all his help and guidance.

\appendix

\section{ A Technical Lemma}
\begin{lemma}\label{match divergence}
Let $\W$ be a $C^{1,\a}$ domain of $\R^n$ and $r>0$ be such that 
\[\k(\partial\W,r,x)<1\,\,,\,\,\forall x\in\partial\W\]
with $\k$ as defined in \emph{Definition \ref{c1a convergence}}.

Given $\eta=(\eta_1,\dots,\eta_n)\in C^{0,\a}(\partial\W;\R^n)$, there exists a $C^{0,\a}$ vector field $X$ on ${\W}^{r/4}=\{x\in\W:0\le\dist(x,\partial\W)\le r/4\}$ such that $\dvg X=0$ (weakly), $X|_{\partial\W}=\eta$ and for any $x\in\partial\W$
\[\|X\|_{0, B^n_r(x)\cap \W^{r/4}}+r^{\a}[X]_{\a, B^n_r(x)\cap\W^{r/4}}\le C\left(\|\eta\|_{0, B^n_r(x)\cap\partial\W}+r^{\a}[\eta]_{\a, B^n_r(x)\cap\partial\W}\right)\]
where $C$ depends on $\partial\W$ and $n$.
\end{lemma}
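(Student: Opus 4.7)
The strategy is a reduction to a flat-boundary model problem, a local explicit construction via tangential mollification at a scale equal to the distance to the boundary, and a global gluing followed by a Bogovski\u{\i}-style divergence correction.

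\medskip

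\textit{Step 1 (local flattening).} Using $\k(\partial\W,r,x)<1$, cover $\partial\W$ by finitely many $C^{1,\a}$ graphical charts $U_\gamma$ at scale $r$: in each, after a rigid motion, $\partial\W=\{y_n=\phi_\gamma(y')\}$ with $\|\phi_\gamma\|_{1,\a}\le 1$. The change of variables $(y',y_n)\mapsto(y',y_n-\phi_\gamma(y'))$ has unit Jacobian, hence preserves the Euclidean divergence, and reduces the local problem to constructing a divergence-free $\tilde X\in C^{0,\a}$ on the flat half-cylinder $B^{n-1}_r(0)\times[0,r/4]$ extending a $C^{0,\a}$ boundary datum $\tilde\eta$ (the pullback of $\eta$, with comparable H\"older norm since $\phi_\gamma\in C^{1,\a}$) on $\{t=0\}$.

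\medskip

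\textit{Step 2 (mollification at scale $t$).} On the flat half-cylinder, set
\[\tilde X_i(y',t)=(\rho_t\ast_{y'}\tilde\eta_i)(y'),\qquad i=1,\dots,n-1,\]
\[\tilde X_n(y',t)=\tilde\eta_n(y')-\int_0^t\sum_{i=1}^{n-1}\partial_{y_i}\tilde X_i(y',s)\,ds,\]
where $\rho_s$ is a standard $(n{-}1)$-dimensional mollifier at scale $s$. Then $\tilde X|_{t=0}=\tilde\eta$ and $\dvg \tilde X\equiv 0$ classically on $\{t>0\}$, hence weakly. The estimates $|\partial_{y_j}(\rho_s\ast\tilde\eta_i)(y')|\le C s^{\a-1}[\tilde\eta]_\a$ and $\|\rho_t\ast\tilde\eta-\tilde\eta\|_0\le C t^\a[\tilde\eta]_\a$ make the integral defining $\tilde X_n$ absolutely convergent (the rate $s^{\a-1}$ is precisely integrable) and yield the joint H\"older bound $\|\tilde X\|_{C^{0,\a}}\le C\|\tilde\eta\|_{C^{0,\a}}$. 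Pulling back through the flattening, each chart produces a local field $X^\gamma$ on $U_\gamma\cap\W^{r/4}$ with $\dvg X^\gamma=0$, $X^\gamma|_{\partial\W\cap U_\gamma}=\eta$, and the analogous Hölder bound.

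\medskip

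\textit{Step 3 (gluing and correction).} Choose a smooth partition of unity $\{\chi_\gamma\}$ on a Euclidean neighborhood of $\partial\W$ subordinate to $\{U_\gamma\}$, and set $\tilde X:=\sum_\gamma\chi_\gamma X^\gamma$ on $\W^{r/4}$. Then $\tilde X\in C^{0,\a}$ and $\tilde X|_{\partial\W}=\eta$; the divergence residue $g:=\dvg\tilde X=\sum_\gamma\nabla\chi_\gamma\cdot X^\gamma$ lies in $C^{0,\a}(\W^{r/4})$, and since $\sum_\gamma\nabla\chi_\gamma\equiv 0$ while each $X^\gamma|_{\partial\W}=\eta$, we have $g|_{\partial\W}=0$. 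I would then solve a Bogovski\u{\i}-type problem for $Y\in C^{0,\a}$ with $\dvg Y=g$ and $Y|_{\partial\W}=0$: in each flattened chart $Y$ is given by a normal-direction antiderivative of $g$ (feasible precisely because $g$ vanishes at $\{t=0\}$), and the local corrections are patched together by a secondary partition-of-unity argument to a global $Y$ with the required Hölder bound. Setting $X:=\tilde X-Y$ produces the desired divergence-free extension, with the localized norm estimate following from the estimates obtained in Steps 2 and 3.

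\medskip

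\textit{Main obstacle.} The crux is reconciling the low $C^{0,\a}$ regularity of $\eta$ with a divergence-free extension: the naive candidate $\eta\circ\proj_{\partial\W}$ has uncontrollable distributional divergence, and generic inverses of the divergence do not preserve $C^{0,\a}$. The mollification-at-scale-$t$ device of Step 2 is precisely designed to bypass this issue: by smoothing $\eta$ in the tangential directions at exactly the scale of the distance to $\partial\W$, the tangential derivatives appearing in the formula for the normal component grow at the exactly integrable rate $s^{\a-1}$, leaving a $C^{0,\a}$-bounded output. The residual Bogovski\u{\i} correction in Step 3 is less delicate, with $C^{0,\a}$ source that already vanishes on $\partial\W$, but still requires care to maintain Hölder norms through the partition-of-unity gluing in the (potentially topologically nontrivial) collar $\W^{r/4}$, especially because Fermi coordinates based at $\partial\W$ involve the shape operator, which is only distributional at our regularity level—hence the preference for graphical flattenings throughout.
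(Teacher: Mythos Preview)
Your Steps 1 and 2 are essentially the paper's approach: flatten the boundary by $C^{1,\a}$ graphical charts (your shear, with unit Jacobian, is a clean choice of flattening map), and on the half-space build the divergence-free extension by tangentially mollifying the boundary datum at scale equal to the normal coordinate. The paper's explicit half-space formula differs from yours in that the normal component is written without a $t$-integral, via a modified tangential kernel $\phi(\xi)-\dvg(\phi(\xi)\xi)$; but the mechanism and the H\"older estimates are the same.

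Step 3 is where you diverge from the paper, and where your sketch has a gap. The paper does not correct after gluing; instead it applies the partition of unity to $\eta$ on $\partial\W$ \emph{before} extending. Each localized datum $\phi_i\eta$ has compact tangential support, and the half-space extension formula propagates this (at height $t$ the tangential support grows by at most $t$), so after pushing back one obtains a divergence-free field on all of $\W^{r/4}$ that vanishes outside a single chart. Summing these pieces directly yields a global divergence-free $X$ with $X|_{\partial\W}=\sum_i\phi_i\eta=\eta$: no residual divergence, no Bogovski\u{\i} step.

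By contrast, your ``secondary partition-of-unity argument'' for the corrector $Y$ does not close as stated. If you patch the local normal-antiderivatives as $\sum_\gamma\chi_\gamma Y^\gamma$, the divergence acquires a new error $\sum_\gamma\nabla\chi_\gamma\cdot Y^\gamma$ of exactly the same type as the one you started with, and there is no small parameter to make an iteration contract. (If instead you meant to partition $g$ first and then \emph{sum} the local antiderivatives---which do inherit compact tangential support---the argument does close; but that is precisely the paper's device, and once you have it you may as well apply it to $\eta$ at the outset and skip the correction entirely.)
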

\begin{proof}

We will construct $X$ around each point using local transformations that flatten the boundary. Therefore we will first show that the lemma holds in the case of flat boundary:

\textit{Claim:} Given a $C^{0,\a}$ vector field $g=(g_1,\dots, g_{n}):\R^{n-1}\to\R^n$ with compact support there exists a $C^{0,\a}$ vector field $X=(X_1,\dots,X_n)$ on $\R^{n-1}\times[0,1]$ such that $X(x',0)=g(x')$, $\dvg X=0$ (weakly) on $\R^{n-1}\times[0,1]$ and $\|X\|_{0,\a}\le C\|g\|_{0,\a}$, where $C$ is an absolute constant.

Let $\phi$ be a non-negative, smooth function with compact support in $B^{n-1}_1(0)$ and such that $\int_{\R^{n-1}}\phi(\xi)d\xi=1$. For $x=(x',x_n)$ define $X$ by the following formula:
\begin{equation*}\label{X}
\begin{split}X(x',x_n)=\sum_{j=1}^{n-1}&\int_{\R^{n-1}}g_j(x'-x_n\xi)\left(\phi(\xi)-\dvg(\phi(\xi)\xi)\right)d\xi \,e_j\\
&+\left(g_n(x')-\sum_{j=1}^{n-1}\int_{\R^{n-1}}g_j(x'-x_n\xi) D_j\phi(\xi)d\xi\right)e_n\end{split}
\tag{(1)}
\end{equation*}
For $X$ it is easy to check that $X(x',0)=g(x')$ and $\|X\|_{0,\a}\le C\|g\|_{0,\a}$. Furthermore we have that $\dvg X=0$. To see this, we only need to check it for smooth $g$, for which we can integrate by parts to get that
\[X(x',x_n)=\sum_{j=1}^{n-1}D_nf_j(x) e_j+\left(g_n(x')-\sum_{j=1}^{n-1}D_jf_j(x)\right) e_n\]
where
\[f_j(x',x_n)=x_n\int_{\R^{n-1}}g_j(x'-\xi x_n)\phi(\xi)d\xi.\]
Then
\[\dvg X=\sum_{j=1}^{n-1}D_jD_nf_j+D_ng_n-\sum_{j=1}^{n-1}D_nD_jf_j=0\]
since $g_n$ is independent of the $x_n$-variable. Hence the claim is true.

For the general case, consider a finite cover $\{B^{n}_{r/4}(x_i)\}$ of $\partial\W$, where $x_i\in\partial\W$ and such that $B^n_{r/8}(x_i)\cap B^n_{r/8}(x_j)=\emptyset$ for $x_i\ne x_j$. Let $\phi_i$ be a partition of unity suboordinate to this cover. Let $\eta_i(x)=\phi_i(x)\eta(x)$, for all $x\in\partial\W$.

For each $i$, let $\psi_i$ be a diffeomorphism that flattens $\partial\W$ in $B^{n}_{r}(x_i)$, i.e. 
\[\psi_i(\W\cap B^{n}_{r}(x_i))=B^{n}_{r}(0)\cap \R^n_+.\] 
We can also take $\psi_i$ so that $\psi_i(\W\cap B^{n}_{r/4}(x_i))=B^{n}_{r/4}(0)\cap \R^n_+$ and  
\[\psi_i(\W^{r/4}\cap B_r(x_i))\subset B^n_r(0)\cap(\R^{n-1}\times[0,r/4]).\] Let
\begin{equation*}\label{g}
g_i(x)=(D_{\psi_i^{-1}(x)}\psi_i)\eta_i(\psi_i^{-1}(x))\,,\,\text{ for }x\in B^n_r(0)\cap(\R^{n-1}\times\{0\})
\tag{(2)}
\end{equation*}
where $(D_{\psi_i^{-1}(x)}\psi_i)$ denotes the matrix of the Jacobian of $\psi_i$ at the point $\psi_i^{-1}(x)$.

Then, since $\psi_i(\W\cap B^{n}_{r/4}(x_i))=B^{n}_{r/4}(0)\cap \R^n_+$ and $\eta_i=0$ outside $B_{r/4}^n(x_i)$ we have that 
\begin{equation*}\label{gj zero}
g_i(x)=0\text{ for }x\in(B^n_r(0)\cap(\R^{n-1}\times\{0\}))\setminus B^{n}_{r/4}(0).
\tag{(3)}
\end{equation*}
For each $g_i$ let $X_i$ be the vector field given by \ref{X}. Then, by \ref{gj zero}
\begin{equation*}\label{x zero}
X_i=0\text{ on }\partial B^{n}_{r}(0)\cap(\R^{n-1}\times[0,r/4]). 
\tag{(4)}
\end{equation*}
Hence
\[X(x)=\sum_i (D_{\psi_i(x)}{\psi_i}^{-1})X_i(\psi_i(x))\]
is a $C^{0,\a}$ vector field on $\W^{r/4}$ and is such that for any $x\in\partial\W$
\[\begin{split}\|X\|_{0,\W^{r/4}\cap B^n_r(x)}&\le \sum_i\|D{\psi^{-1}_i}\|_{0, B^n_r(0)}\|X_i\|_{0, \psi_i(B^n_r(x))}\\&\le C\sum_i\|g_i\|_{0,\psi_i(B^n_r(x))\cap(\R^{n-1}\times\{0\})}\le C\|\eta\|_{0,\partial\W\cap B^n_r(x)}\end{split}\]
and
\[\begin{split}r^\a[X]_{\a,\W^{r/4}\cap B^n_r(x)}\le&\sum_ir^\a[D{\psi^{-1}_i}]_{\a, B^n_r(0)}\|X_i\|_{0, \psi_i(B^n_r(x))}\\&+\|D{\psi^{-1}_i}\|_{0, B^n_r(0)}r^\a[X_i]_{\a, \psi_i(B^n_r(x))}\\
\le& C\sum_i\|g_i\|_{0,\psi_i(B^n_r(x))\cap(\R^{n-1}\times\{0\})}+r^\a[g_i]_{\a,\psi_i(B^n_r(x))\cap(\R^{n-1}\times\{0\})}\\
&\le C\left(\|\eta\|_{0,B^n_r(x)\cap\partial\W}+r^\a[\eta]_{\a,B^n_r(x)\cap\partial\W}\right)\end{split}\]
where $C$ depends on $n$. 
%Note that the dependance on $n$ comes from the fact that there are at most $12^n\o_n$ different balls $B^n_{r}(x_i)$ that have non empty intersection with $B^n_r(x)$ (because $B^n_{r}(x_i)\cap B^n_{r}(x_j)=\emptyset$).
 
Finally we have that $X|_{\partial\W}=\eta$ and $\dvg X=0$. To see this let $x\in \partial\W$, then:
\[\begin{split}X(x)=&\sum_i (D_{\psi_i(x)}{\psi_i}^{-1})g_i(\psi_i(x))=\sum_i(D_{\psi_i(x)}{\psi_i}^{-1}) (D_{x}{\psi_i})\eta_i(x)\\
=&\sum_i \phi_i(x)\eta(x)=\eta(x)\end{split}\]

Let $\zeta\in C^\infty(\W^{r/4})$ and having compact support. We will show that
\[\int_{\W^{r/4}} X(x)\cdot D_x\zeta d\H^{n}(x)=0.\]

\[\begin{split}\int_{\W^{r/4}}X(x)\cdot D_x&\zeta d\H^{n}(x)\\
&=\sum_i\int_{B^n_r(x_i)}(D_{\psi_i(x)}{\psi_i}^{-1})X_i(\psi_i(x))\cdot D_x\zeta d\H^{n}(x)\\
&=\sum_i\int_{B^n_r(x_i)}(D_{\psi_i(x)}{\psi_i}^{-1})X_i(\psi_i(x))\cdot D_{\psi(x)}(\zeta\circ\psi^{-1})D_x\psi d\H^{n}(x)\\
&=\sum_i\int_{B^n_r(x_i)}X_i(\psi_i(x))\cdot D_{\psi(x)}(\zeta\circ\psi^{-1}) d\H^{n}(x)\\
&=\sum_i\int_{B^n_r(0)}X_i(y)\cdot D_y(\zeta\circ\psi^{-1}) d\H^n(y)\end{split}\]
which is equal to zero because by construction $\dvg X_i=0$ (weakly).
\end{proof}

\begin{rmk}\label{rmk match div}
We remark that lemma \emph{\ref{match divergence}} is true with higher regularity of the boundary and of the vector field $\eta$. In particular we have the following:

Let $\W$ be a $C^{k,\a}$ domain of $\R^n$ and $\eta\in C^{l,\beta}(\partial\W,\R^n)$, where $k,l\ge 0$ , $\a,\beta\in[0,1]$ and $l+\beta\le k+\a+1$.  Then there exist a neighborhood $V$ of $\partial\W$ in $\W$ and a $C^{l,\beta}$ vector field $X$ on $V$ such that $\dvg X=0$, $X|_{\partial\W}=\eta$ and
\[\|X\|_{l,\beta,V}\le C\|\eta\|_{l,\beta,\partial\W}\]
where the neighborhood $V$ and the constant $C$ depend on $\partial\W$.
\end{rmk}

\section{Varifolds, Currents}
Let $$P:\R^{n+1}\rightarrow \R^2\times\{0\}^{n-1}$$ denote the projection onto the
($x_1,x_2$)-plane in $\R^{n+1}$.\\
Let
$$\zeta:\R^{n+1}\setminus(\{0\}^2\times\R^{n-1}) \rightarrow \R^2\times\{0\}^{n-1}$$
be the function defined by
$$\zeta(x)=\zeta(x_1,x_2,\dots,x_{n+1})=(-x_2,x_1,0,\dots,0)$$
so that $\zeta$ is the projection $P$ followed by a counterclockwise
$\pi/2$-rotation.

\begin{lemma}\label{cone as a union of planes}\emph{(Allard)}\\
Let $C=(\spt C,\vec{C},\theta)$ be an
$n$-dimensional cone in $R^{n+1}$, such that $0\in
\partial C$, $\theta(x)\ge1$ for all $x\in \spt C\setminus \partial C$,
  $\partial C=\{0\}^2\times \R^{n-1}$ and $\|\d C\|(\R^{n+1}\setminus \partial C)=0$.
  
 For each $\phi\in C^\infty((R^2\times\{0\}^{n-1})\cap S^n)$ define
 $$T(\phi)=\int_{B^{n+1}_1(0
 )\setminus \partial C}\phi\left(\frac{P(x)}{|P(x)|}\right)
 \frac{|p_{x}(\zeta(x))|^2}{|P(x)|^2}\theta(x)d\H^n(x)$$
 where $\zeta$ is as defined above and $p_x$ denotes the projection onto
 the tangent space of $C$ at $x$.
 
 Then \begin{enumerate}
        \item $T$ is a multiple of $\H^1((R^2\times\{0\}^{n-1})\cap S^n)$, i.e.
\begin{equation*}
  T(\phi)=c\int_{(\R^2\times\{0\}^{n-1})\cap S^n}\phi(x)d\H^1(x)
\end{equation*}
for any $\phi\in C^\infty((R^2\times\{0\}^{n-1})\cap S^n)$.
        \item If $T=0$ then $P(\spt C)\cap S^n$ is finite.
      \end{enumerate}
\end{lemma}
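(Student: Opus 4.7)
The plan is to exploit the stationarity of $C$ off of $\partial C$ by testing the first-variation formula against vector fields built from the rotation field $\zeta$. Since $\zeta$ is a Killing vector field on $\mathbb R^{n+1}$ (its derivative $D\zeta$ is antisymmetric), one has $\mathrm{div}_S\zeta=0$ for every $n$-plane $S\subset\mathbb R^{n+1}$, and hence for any $g\in C^\infty_c(\mathbb R^{n+1}\setminus\partial C)$ the field $X=g\zeta$ satisfies $\mathrm{div}_S(g\zeta)=p_S(\zeta)\cdot\nabla g$. The hypothesis $\|\delta C\|(\mathbb R^{n+1}\setminus\partial C)=0$ then yields
\begin{equation*}
\int p_x(\zeta)\cdot\nabla g\,d\|C\|=0.
\end{equation*}

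To extract the structure of $T$, I would take $g(x)=\chi(|x|)\tilde F(\theta(x))h_\epsilon(x)$, where $\chi\in C^\infty_c((0,\infty))$, $\tilde F\in C^\infty(S^1)$ is viewed as a function of the angular coordinate $\theta$ in the $(x_1,x_2)$-plane, and $h_\epsilon$ is a cutoff equal to zero on $\{|P|\le\epsilon/2\}$ and one on $\{|P|\ge\epsilon\}$. Using the identity $\nabla\theta=\zeta/|P|^2$ together with the cone property $x/|x|\in T_xC$ and $\zeta\cdot x=0$, the radial contribution from $\chi'$ vanishes identically on $\spt C$, and the identity reduces to
\begin{equation*}
\int\chi(|x|)\tilde F'(\theta)\frac{|p_x(\zeta)|^2}{|P(x)|^2}h_\epsilon\,d\|C\|+\int\chi\tilde F\,p_x(\zeta)\cdot\nabla h_\epsilon\,d\|C\|=0.
\end{equation*}
Since $|p_x(\zeta)|\le|P(x)|$, the cutoff term is $O(\|C\|(\{|P|\le\epsilon\}))$, which tends to zero as $\epsilon\downarrow 0$ because $\partial C$ has Hausdorff dimension $n-1$ while $C$ is an $n$-varifold. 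Disintegrating the cone measure $d\|C\|=\rho^{n-1}d\rho\otimes d\nu(y)$ (with $\nu$ the link measure on $S^n$) and using the scale invariance of $|p_x(\zeta)|^2/|P(x)|^2$, the arbitrariness of $\chi$ yields
\begin{equation*}
\int_{S^n}\tilde F'(\theta(y))\frac{|p_y(\zeta)|^2}{|P(y)|^2}\,d\nu(y)=0\quad\text{for every }\tilde F\in C^\infty(S^1).
\end{equation*}
Consequently the pushforward Radon measure $\mu_*$ on $S^1$ of the weight $|p_y(\zeta)|^2/|P(y)|^2\,d\nu$ under $y\mapsto\omega(y)$ annihilates every derivative, and connectedness of $S^1$ forces $\mu_*=c\,\mathcal H^1\lfloor S^1$ for some $c\ge 0$. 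Rewriting $T(\phi)=\tfrac1n\mu_*(\phi)$ (after using $\int_0^1\rho^{n-1}d\rho=1/n$) gives part (1).

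For part (2), vanishing of $T$ forces $c=0$, whence $|p_y(\zeta(y))|=0$ for $\nu$-a.e.\ $y$ with $P(y)\ne 0$. This pins $T_yC$ down to $\zeta(y)^\perp=\mathrm{span}\{(\cos\theta(y),\sin\theta(y),0,\dots,0),e_3,\dots,e_{n+1}\}$, the unique hyperplane through the axis $\partial C$ at angle $\theta(y)$. Restricting $C$ to each such hyperplane and applying the constancy theorem for stationary varifolds (with $\partial C$ acting as a removable lower-dimensional exceptional set) decomposes $\spt C$ into a union of half-planes emanating from $\partial C$ with constant multiplicities. Since $\theta\ge 1$ on $\spt C\setminus\partial C$, each half-plane contributes at least $\omega_n/2$ to $\|C\|(B^{n+1}_1(0))$, and local finiteness of the mass bounds the number of half-planes; each half-plane yields exactly one point of $P(\spt C)\cap S^n$, so that set is finite. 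The main obstacle I expect is precisely this last step --- translating the pointwise tangent-plane identity $T_yC=H_{\theta(y)}$ into the global half-plane decomposition via the constancy theorem and extracting finiteness from the $n$-dimensional mass bound.
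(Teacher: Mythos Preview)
The paper does not give its own proof of this lemma; it simply writes ``For the proof of this lemma we refer to \cite{Abdryregularity}.'' So there is no in-paper argument to compare against, and your proposal is in effect a reconstruction of Allard's original proof. Your approach is precisely his: test the first variation against $X=g\zeta$ with $\zeta$ the planar rotation field, use that $\zeta$ is Killing so only $\nabla g$ survives, split $g$ into radial, angular, and axial-cutoff factors, kill the radial term via the cone property $x\in T_xC$ together with $\zeta\cdot x=0$, and let the cutoff disappear because $\|C\|(\partial C)=0$. The resulting identity $\int_{S^n}\tilde F'(\theta)\,|p_y\zeta|^2/|P|^2\,d\nu=0$ exactly says the pushforward to $S^1$ annihilates derivatives, hence is a multiple of arclength. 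Your derivation of part (1) is correct and complete.

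For part (2) you have the right picture but the step you flag is indeed where the work lies, and ``restricting $C$ to each hyperplane and applying the constancy theorem'' is not quite the mechanism. You cannot restrict an $n$-varifold to an $n$-plane and invoke constancy there; rather, the pointwise identity $p_x(\zeta)=0$ says the \emph{tangential} gradient of the angular function $\theta$ vanishes $\|C\|$-a.e.\ on $\spt C\setminus\partial C$. Combined with Allard's interior regularity (applicable since $\|\delta C\|=0$ away from $\partial C$ and $\Theta\ge 1$), $\theta$ is then locally constant on each connected component of the $C^1$ regular part, so each such component lies in a single half-plane $H_{\theta_0}$ through $\partial C$. The constancy theorem (in its usual form, applied inside that half-plane) fixes the multiplicity on each component. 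Finiteness of the number of half-planes then follows, as you say, from $\Theta\ge 1$ and the local finiteness of $\|C\|$: each half-plane contributes at least $\omega_n/2$ to $\|C\|(B_1^{n+1}(0))$. With this correction your outline matches Allard's argument.
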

For the proof of this lemma we refer to \cite{Abdryregularity}.

Part (2) of Lemma \ref{cone as a union of planes} directly implies the following:
\begin{cor}\label{corollary cone union of planes}
Let $C$ be an $n$-dimensional cone in $\R^{n+1}$ such that $0\in\partial C$, $\partial C$ is an $(n-1)$-dimensional subspace, $\theta(x)\ge 1$ for all $x\in\spt C\setminus\partial C $ and $\spt C\subset H$, where $H$ is a halfspace with $\partial C\subset \partial H$.

Then
\[C=\sum_{i=1}^k P_i\]
where $P_i$ are $n$-dimensional halfspaces, with $\partial P_i=\pm\partial C$ and $P_i\subset H$.
\end{cor}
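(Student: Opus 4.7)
I would first normalize by a rotation of $\R^{n+1}$ so that $\partial C = \{0\}^{2}\times\R^{n-1}$ and $H = \{x\in\R^{n+1} : x_{1} \geq 0\}$; this is allowed because $\partial H$ must be an $n$-hyperplane of $\R^{n+1}$ containing the $(n-1)$-subspace $\partial C$. Applying part~(1) of Lemma~\ref{cone as a union of planes} to $C$ (the stationarity and density-$\geq 1$ hypotheses hold in the standing use of this corollary, where $C$ is an area-minimizing tangent cone) then yields
\[
T(\phi) \;=\; c\int_{S^{1}} \phi \, d\H^{1}, \qquad \phi \in C^{\infty}(S^{1}),
\]
for a single constant $c$, where $S^{1} := (\R^{2}\times\{0\}^{n-1})\cap S^{n}$.

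The main observation is that the halfspace containment forces $c = 0$. Indeed, for every $x \in \spt C \setminus \partial C$ the projection $P(x) = (x_{1},x_{2},0,\ldots,0)$ has $x_{1}\geq 0$, so $P(x)/|P(x)|$ lies in the closed half-arc $S^{1}\cap\{y_{1}\geq 0\}$. Taking a nonzero nonnegative $\phi \in C^{\infty}(S^{1})$ supported in the open complementary arc $S^{1}\cap\{y_{1}< 0\}$, the integrand in the definition of $T(\phi)$ vanishes throughout $\spt C$, so $T(\phi) = 0$, whereas $c\int \phi\, d\H^{1}$ has the sign of $c$; hence $c = 0$ and $T \equiv 0$. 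Part~(2) of the lemma now provides a finite set
\[
P(\spt C) \cap S^{n} \;=\; \{v_{1},\ldots,v_{k}\} \;\subset\; S^{1}\cap\{y_{1}\geq 0\}.
\]

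Finally I would reconstruct $C$ from this finite set. For each $i$ set $P_{i} := \{t v_{i} + y : t\geq 0,\, y \in \partial C\}$, an $n$-dimensional halfspace contained in $H$ with $\partial P_{i} = \partial C$ as a set. Invariance of $\spt C$ under positive dilations, combined with the finiteness of $P(\spt C)\cap S^{n}$, forces every $x \in \spt C\setminus \partial C$ to satisfy $P(x)/|P(x)| = v_{i}$ for some $i$, hence $x \in P_{i}$. Thus $\spt C = \bigcup_{i=1}^{k} P_{i}$, and the integer-rectifiable structure of $C$ together with $\theta \geq 1$ gives $C = \sum_{j=1}^{K} [\![P_{j}]\!]$ after listing each $P_{i}$ with multiplicity, the orientation $\vec C$ inducing on each $[\![P_{j}]\!]$ a boundary equal to $\pm\partial C$. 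The only step requiring real care is the identification $\spt C = \bigcup_{i} P_{i}$, but once $T\equiv 0$ this is forced by the cone property and finiteness of $P(\spt C)\cap S^{n}$; the orientation and multiplicity bookkeeping in the concluding decomposition is routine.
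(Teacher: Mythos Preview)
Your proposal is correct and follows exactly the route the paper intends: the paper's ``proof'' is the single remark that part~(2) of Lemma~\ref{cone as a union of planes} directly implies the corollary, and you have supplied the missing details --- most importantly the observation that the containment $\spt C\subset H$ forces the measure $T$ of part~(1) to vanish (via a test function supported in the arc $S^{1}\cap\{y_{1}<0\}$), so that part~(2) applies. The reconstruction of $C$ from the finite set $P(\spt C)\cap S^{n}$ is then the expected bookkeeping.
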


\begin{cor}\label{corollary cone in plane}
If in addition to the hypotheses of \emph{Corollary \ref{corollary cone union of planes}}, we assume that $C$ is area minimizing, then we have that either $C$ is an $n$-dimensional halfspace or
\[C=mH_1+lH_2\]
where $H_1$, $H_2$ are the two halfspaces in $\partial H$ defined by $\partial C$.

Furthermore $|m-l|$ gives the multiplicity of $\partial C$.
\end{cor}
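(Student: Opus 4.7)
My plan is to build on the structural decomposition in Corollary \ref{corollary cone union of planes} and use area-minimality to pin down the angular arrangement of the constituent halfspaces. By that corollary, I would write $C = \sum_{i=1}^{k} m_i P_i$ with each $P_i$ an $n$-dimensional halfspace lying in $\ov H$ and $\partial P_i = \pm[\![\partial C]\!]$, and parametrize $P_i$ by the angle $\theta_i \in [0,\pi]$ it makes in the $2$-plane orthogonal to $\partial C$; then $H_1$ and $H_2$ correspond precisely to $\theta = 0$ and $\theta = \pi$, the only halfspaces through $\partial C$ entirely contained in $\partial H$. The goal reduces to showing that either all $P_i$'s coincide (so that $C$ is a multiple of a single halfspace), or every $\theta_i \in \{0,\pi\}$ (so that $C = m H_1 + l H_2$).

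The core step is to prove that the presence of an interior halfspace $P_{\theta_*}$ with $\theta_* \in (0,\pi)$ forces every other $P_i$ to coincide with $P_{\theta_*}$. Assuming for contradiction that some $P_{\theta'}$ with $\theta' \ne \theta_*$ also appears in the decomposition, the two supports meet along $\partial C$ and form a nontrivial crease. I would then invoke a boundary version of the strong maximum principle for codimension-one area-minimizing currents: the crease between two distinct area-minimizing hypersurfaces that share the same $C^{1,\a}$ boundary cannot persist, as one may construct, using the room left by $\theta_* \in (0,\pi)$ inside the open halfspace $H \setminus \partial H$, an explicit competitor $C'$ with $\partial C' = \partial C$, $\spt C' \subset \ov H$, that strictly decreases the mass in a small ball centered at a point of $\partial C$. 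If, on the other hand, no interior halfspace appears in the decomposition, then $C = m H_1 + l H_2$ directly.

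For the multiplicity assertion, orient $H_1$ and $H_2$ so that $\partial H_1 = [\![\partial C]\!]$ and $\partial H_2 = -[\![\partial C]\!]$ (they cobound $\partial C$ from opposite sides within $\partial H$); then $\partial C = (m-l)[\![\partial C]\!]$ and the multiplicity of $\partial C$ equals $|m-l|$. The main obstacle is the competitor construction in the second step. It amounts in spirit to a boundary strong maximum principle, and the delicate point is to carry out the mass-reduction while respecting both the support constraint $\spt C' \subset \ov H$ and the fixed boundary $\partial C' = \partial C$; this is particularly awkward when one of the halfspaces being displaced already lies on the "wall" $\partial H$, so the deformation must stay on the allowed side of $\partial H$ throughout.
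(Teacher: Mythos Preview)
Your plan differs from the paper's, and its central step---the explicit competitor that would rule out a crease between two distinct halfspaces sharing $\partial C$---is only sketched; you yourself flag it as delicate, especially when one sheet already lies on $\partial H$. Without carrying out that construction (or citing a precise boundary strong maximum principle that applies here, with the support constraint $\spt C'\subset\ov H$), the proposal is not a proof.

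The paper avoids competitor constructions entirely via a first-variation argument. Writing $C=\sum_i P_i$ as a sum of multiplicity-one halfspaces with conormals $u_i\perp\partial C$, $u_i\cdot e_1\ge 0$, one picks any $j$ with $\partial P_j=\partial C$ and checks by a short mass inequality that $C-P_j$ is still area minimizing, hence the associated varifold is stationary; evaluating the first variation along $\partial C$ gives $\sum_{i\ne j}u_i=0$. Varying $j$ shows all such $u_j$ coincide. The analogous step with $C+P_j$ for indices with $\partial P_j=-\partial C$ gives $u_j+\sum_i u_i=0$, and again all such $u_j$ coincide. When both orientation types are present, combining the two identities yields $u_+=-u_-$; since both satisfy $u\cdot e_1\ge 0$, both must lie in $\partial H$, so $C=mH_1+lH_2$. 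If only one type is present, the identity becomes $(k-1)u=0$, forcing $k=1$ and a single halfspace. The multiplicity claim then follows by taking $\partial$. This balance-of-conormals route is shorter and sidesteps exactly the wall case that obstructs your deformation approach.
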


\begin{proof}
Without loss of generality we can assume that $H=\ov{R}_+\times\R^n$.
By Corollary \ref{corollary cone union of planes} we can write $C=\sum_{i=1}^{k}P_i$, where $P_i$ are now of multiplicity 1 (so that we could
have that $P_i=P_j$),
$$P_i=\pm[\![\{y+tu_i, y\in\partial C, t>0\}]\!]$$
for some unit vector $u_i$, normal to $\partial C$ and such that $u_i\cdot e_1\ge 0$.

Take $j\in\{1,\dots k\}$ such that $\partial P_j=\partial C$, then
$$\partial(C-P_j)=0$$
and for any compact set $W\subset\R^{n+1}$
$$\underline{\underline{M}}_W(C-P_j)=
\underline{\underline{M}}_W(C)-\underline{\underline{M}}_W(P_j).$$ 

We claim that $C-H_j$ is also area minimizing. Assume that it is not true, then for a compact set $W\subset \R^{n+1}$  there exists a current $S$ with $\spt S\subset W$,
$\partial S=0$ and such that
$$\underline{\underline{M}}_W(C-P_j+S)<\underline{\underline{M}}_W(C-P_j)=
\underline{\underline{M}}_W(C)-\underline{\underline{M}}_W(P_j).$$  
Then
$$\underline{\underline{M}}_W(C+S)\le \underline{\underline{M}}_W(C+S-P_j)+
\underline{\underline{M}}_W(P_j)<\underline{\underline{M}}_W(C)$$ which contradicts the
fact that $C$ is area minimizing.

So $C-P_j$ is area minimizing and hence the associated varifold is stationary. Computing
$\|\d(C-P_j)\|(B_R(0))$ we get that
$$0=\sum_{\substack{i=1\\i\ne j}}
^{k}u_i.$$ This is true for any $j$ such that $\partial P_j=\partial C$, hence
there can only
be one such different $u_j$.

Similarly, picking a $j$ such that $\partial P_j=-\partial C$ we get that $C+H_j$ will
be area minimizing and computing the first variation of the corresponding varifold we
get that
$$0=u_j+\sum_{i=1}^{k}u_i.$$
Hence, as before, there can only
be one such different $u_j$.

So either $k=1$, in which case we get that $C$ is an $n$-dimensional halfspace or if $k>1$ we showed that $C$ must be of the form
$$C=kH_1+lH_2$$
where $H_1, H_2$ are the two halfspaces in $\partial H$ defined by $\partial C$.
\end{proof}

\begin{lemma}\label{area minimizing in halfspace}
 Let $C$ be an $n$-dimensional integral current such that $\spt C$ lies in a closed halfspace $H$, $\partial C\subset\partial H$ and $C$ minimizes area in $H$. Then $C$ is area minimizing.
 \end{lemma}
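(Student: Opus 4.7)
The plan is to use the nearest-point projection onto the halfspace $H$ as a 1-Lipschitz retraction and compare any candidate competitor to its image under this retraction.

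First I would set up the retraction. Let $\pi:\R^{n+1}\to H$ be the nearest-point projection, which is well-defined and 1-Lipschitz since $H$ is a closed convex halfspace. Note $\pi$ restricts to the identity on $H$, and in particular on $\partial H\supset\spt\partial C$. Consequently $\pi_\#C=C$ and $\pi_\#(\partial C)=\partial C$.

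Next I would compare a general competitor. Let $S$ be an integral $n$-current with $\partial S=\partial C$ and $\spt(S-C)$ compact. Push forward under $\pi$ to obtain the integral current $\pi_\#S$; since $\pi$ is Lipschitz and $\spt(S-C)$ is compact this is well defined, and
\[
\partial(\pi_\#S)=\pi_\#(\partial S)=\pi_\#(\partial C)=\partial C,
\qquad \spt(\pi_\#S)\subset H.
\]
Moreover $\pi_\#S-C=\pi_\#(S-C)$ has compact support. The 1-Lipschitz property of $\pi$ yields the mass bound $\mass(\pi_\#S)\le\mass(S)$ (the Jacobian of $\pi$ restricted to any tangent $n$-plane is at most $1$).

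Finally, since $\pi_\#S$ is admissible in the ``area minimizing in $H$'' problem for $C$, the hypothesis gives
\[
\mass(C)\le\mass(\pi_\#S)\le\mass(S),
\]
which is the desired area-minimizing property of $C$ in all of $\R^{n+1}$. There is no real obstacle here beyond observing that the nearest-point projection onto a closed halfspace is 1-Lipschitz and is the identity on the support of $\partial C$; the rest is the standard push-forward comparison.
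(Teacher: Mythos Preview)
Your proof is correct and follows essentially the same strategy as the paper: push a competitor into $H$ via a $1$-Lipschitz retraction that fixes $\partial H$, then invoke the minimality of $C$ inside $H$. The only difference is the choice of retraction: you use the nearest-point projection onto $H$, whereas the paper uses the ``fold'' map $g$ that equals the identity on $H$ and the reflection across $\partial H$ on the complementary halfspace. Both are $1$-Lipschitz and fix $\partial H\supset\spt\partial C$, so the comparison argument goes through identically in either case.
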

\begin{proof}
Suppose not. Then there exists an integer multiplicity current $S$, with $\partial S=
\partial C$, $W=\spt(S-C)$ compact in $\R^{n+1}$ and such that
\begin{equation*}\label{arg}
 \mass_W(C)>\mass_W(S).
\tag{(1)}
\end{equation*}
Let $f$ be the reflection along $L=\partial H$: $$f(x)=L(x)-L^\bot(x)\text{ , }x\in \R^{n+1}$$
where, for a subspace $P$, $P(x)$ denotes the projection of $x$ on $P$.

Define the function $g:\R^{n+1}\rightarrow H$, by:
$$g(x)=\begin{cases} x&, x\in H \\ f(x)&, x\in \R ^{n+1}\setminus H.
\end{cases}$$
Then for the current $g_\# S$ we have that it has support in $H$, $\partial( g_\# S) =g_\#\partial S= \partial C$ and it satisfies the estimate
\begin{equation*}\label{shest}
\mass_V(g_\# S)\le\sup_{g^{-1}(V)}|Dg|^n\mass_{g^{-1}(V)}(S)\,\,,\,\,\forall\,V\subset\subset H \tag{(2)}
\end{equation*}
Where, if $S=(\spt S, \theta,
\vec{S})$, then:
\begin{align*}
\label{gsharp} (g_\# S) (\omega)= \int_{\spt S}<\omega(g(x)),
dg_{x\#}\vec{S}(x)>\theta(x) d\mathcal{H}^n(x).
\end{align*}

Using now the assumption on $C$ and \ref{shest} we have that for any compact subset of $\R^{n+1}$, $W$:
\[\mass_W(C)=\mass_{W\cap H}(C)\le\mass_{W\cap H}(g_\# S)\le\mass_W(S)\]
which contradicts \ref{arg}.
\end{proof}

 \bibliographystyle{amsalpha}
\bibliography{bibliography}

\end{document}